\newtheorem{theorem}{Theorem}[section]
\newtheorem{lemma}[theorem]{Lemma}
\newtheorem{proposition}[theorem]{Proposition}
\theoremstyle{definition}
\theoremstyle{remark}
\newtheorem{remark}[theorem]{Remark}
\numberwithin{equation}{section}
\newcommand{\Tr}{\mathrm{Tr}}
\newcommand{\Ad}{\mathrm{Ad}}
\newcommand{\ad}{\mathrm{ad}}
\newcommand{\Z}{\mathbb{Z}}
\newcommand{\R}{\mathbb{R}}
\newcommand{\C}{\mathbb{C}}
\newcommand{\K}{\mathbb{K}}
\renewcommand{\b}[1]{\overline{#1}}
\newcommand{\eps}{\varepsilon}
\newcommand{\g}[1]{{\mathfrak{#1}}}
\newcommand{\im}{\mathrm{Im}}
\newcommand{\re}{\mathrm{Re}}
\newcommand{\wh}[1]{\widehat{#1}}
\newcommand{\SO}{\mathrm{SO}}
\newcommand{\GL}{\mathrm{GL}}
\newcommand{\SL}{\mathrm{SL}}
\newcommand{\cB}{\mathcal{B}}
\newcommand{\cL}{\mathcal{L}}
\newcommand{\cl}{\mathcal{C}^\lambda}
\newcommand{\ct}{\mathrm{Cos}^\lambda}
\newcommand{\st}{\mathrm{Sin}^\lambda}
\newcommand{\gp}{\mathrm{Gr}_p}
\def\sideremark#1{\ifvmode\leavevmode\fi\vadjust{\vbox to0pt{\vss
 \hbox to 0pt{\hskip\hsize\hskip1em
\vbox{\hsize2cm\tiny\raggedright\pretolerance10000 
 \noindent #1\hfill}\hss}\vbox to8pt{\vfil}\vss}}} 
\newcommand{\ipl}[2]{( #1,#2 )}
\newcommand{\ip}[2]{\langle #1,#2\rangle}
\begin{document}

\title[The $\ct$ transform]{The $\ct$ transform on line bundles over compact Hermitian symmetric spaces}

\author{Vivian M. Ho}
\address{Department of Mathematics \\
Louisiana State University\\
Baton Rouge\\\\ Louisiana}
\email{vivian@math.lsu.edu}

\author{Gestur \'{O}lafsson}
\begin{thanks}
{The research of G. \'Olafsson was supported by NSF grant DMS-1101337.} 
\end{thanks}

\address{Department of Mathematics \\
Louisiana State University\\
Baton Rouge\\\\ Louisiana}
\email{olafsson@math.lsu.edu}

\subjclass[2010]{Primary 44A15, 22E46; Secondary 53C35, 43A80}
\date{}
\keywords{$\ct$ transform; Intertwining operators; Representations of semisimple Lie groups; Grassmann manifolds}

\begin{abstract}
In the article \cite{op} the authors determined the spectrum of the $\ct$ transform on 
smooth functions on the Grassmann manifolds $\gp (\K)$. This article extends
those results to line bundles over
certain Grassmannians. In particular we define  the $\ct$ transform on smooth sections of homogeneous 
line bundles over $\gp (\K)$ and show that it 
is an intertwining operator between generalized ($\chi$-spherical) principal series representations induced from
a maximal parabolic subgroup of $\SL (n+1, \K)$. Then we use the
spectrum generating method to determine the $K$-spectrum of the $\ct$ transform. 
\end{abstract}

\maketitle

\section{Introduction}
\label{introduction}
\noindent
The cosine transform is an integral
operator widely used in convex geometry.  
It enjoys a long and opulent history, with connection to several branches
of mathematics such as harmonic analysis, pseudo-differential
operators, convex geometry, and group representations.

The name cosine transform was first 
introduced by Lutwak \cite{lu} for the integral operator on the unit sphere 
\begin{equation}
\label{eq:cos1}
(\mathcal{C} f) (\omega) = \int_{S^n} |(x, w)| f (x)\, d x,\qquad \omega \in S^n
\end{equation}
where $d x$ is the normalized $\SO (n+1)$-invariant measure on $S^n$, $\ipl{\cdot}{\cdot}$ is the usual inner product on
$\R^n$ (so it is the cosine of the angle between two unit vectors). It was later extended to
a meromorphic family of integral operators  
\begin{equation}
\label{eq:cos2}
(\mathcal{C}^\lambda f) (\omega) = \int_{S^n} |(x, w)|^\lambda f (x)\, d x,\qquad \omega \in S^n
\end{equation}
where the integral (\ref{eq:cos2}) is understood, if needed, in the sense of analytic continuation.
We denote this transform by $\ct$.
There are distinct shifts in the power $\lambda$ in the literature based on distinct settings, see \cite{opr}. 
We use the shift $\lambda - \rho$ with $\rho = (n+1)/2$ so that (\ref{eq:cos2}) 
agrees with a standard intertwining operator between principal series representations of $\SL (n+1,\R)$.

Note that $\cl f = 0$ for all odd functions on $S^n$. 
So we can view the $\ct$ transform (\ref{eq:cos2}) as an integral transform on $L^2 (\mathrm{Gr}_1 (\R))$ 
where $\mathrm{Gr}_1 (\R)$ is the Grassmann manifold of one dimensional linear subspaces (lines) of $\R^{n+1}$. 
It was also extended to other Grassmann manifolds. See \cite{opr} and \cite{ru} for
more detailed discussion about the history of the cosine transform.

The operator (\ref{eq:cos2}) with the kernel $|(x, w)|^\lambda$ has been studied 
by many researchers in geometry and analysis like Aleksandrov \cite{al}, Gardner
\cite{gar}, Schneider \cite{sc}, and others, via different approaches, such as the Fourier transform technique, the
Funk-Hecke formula, or representation theory. The importance of the cosine transform comes from intimate connections
to convex geometry and classical integral transforms like the Fourier, Funk and Radon transforms. 
A remarkable fact, due to Gelfand-Shapiro \cite{gs} and Semyanistyi \cite{se}, is that
cosine transforms are restrictions to $S^n$ of the Fourier transforms of homogeneous distributions on $\R^n$. Another
important observation is the connection of the cosine transform to the Funk transform 
$$(\mathcal{F} f) (\omega) = \int\limits_{(x, \omega) = 0} f (x)\, d m (x), \quad \omega \in S^n$$
where $d m$ is the rotation-invariant measure on the $(n-1)$-dimensional sphere $(x, \omega) = 0$. If $f \in C (S^n)$, then
\[\lim\limits_{\lambda \to -1} \gamma_{n+1} (\lambda) (\cl f) (\omega) = \frac{\pi^{1/2}}{\Gamma (n/2)} (\mathcal{F} f) (\omega)\]
where the coefficient $\gamma_n (\lambda)$ is given by 
\[\gamma_n (\lambda) = \frac{\pi^{1/2} \Gamma (- \lambda / 2)}{\Gamma (n/2) \Gamma ((1 + \lambda)/2)},\quad 
\re \lambda > -1,\; \lambda \notin 2 \Z^+.\]

Several natural questions arise for the cosine transform, namely, to determine its kernel and image, to characterize its rank, 
to figure out the meromorphic extension of this operator and reveal singularities, and to derive the composition 
formula and the inversion formula. Many of 
the mentioned important information can be derived by knowing the spectrum of the cosine transform (it is formed by a set of 
spectral functions). The spectral functions of the operator (\ref{eq:cos2}) are well investigated. 
They are given, using a Funk-Hecke type formula, by rational functions of $\Gamma$-factors (Euler Gamma functions), 
see \cite{ru1} and also Section 2 in \cite{opr}.

In recent years, the cosine transform has been generalized to the more general context on the Stiefel and Grassmann
manifolds, for instance, Rubin \cite{ru} worked on more general higher-rank cosine and sine transforms 
where the main tool is the classical Fourier analysis. 
For some specific cases of higher-rank cosine transforms explicit forms for the spectral functions can still be obtained using 
the Funk-Hecke formula or the Fourier transform technique, see, for instance, \cite{or} and \cite{or1}. However, it is unknown and 
difficult to proceed the same way in the most general case.

The cosine transform is closely and naturally related to the representation theory of semisimple Lie groups. In particular, 
the integral transform (\ref{eq:cos2}), as mentioned earlier, has an important group-theoretic interpretation as a
standard intertwining operator $J (\lambda)$ between generalized principal series representations of $G = \SL (n + 1, \K)$
with $\K = \R$, $\C$, or $\mathbb{H}$ (the field of quaternions), in the compact picture. The real case was studied in \cite{dm}, the complex case in \cite{dz},
and the quaternionic case in \cite{pa}, without mentioning any connection to the cosine transform or convex geometry. To the best
of our knowledge, Alesker \cite{ale} was the first to remark that the cosine transform is a 
$\SL (n, \R)$-intertwining operator for some suitable $\lambda$.
The operators $J (\lambda)$, as a meromorphic family of singular integral operators on the
maximal compact subgroup $K$ of $G$, or on the orbit of certain nilpotent group $\overline{N}$, 
have been central objects in the study of representation theory of semisimple Lie groups.

Let $\gp (\K)$ be the Grassmann manifold of $p$-dimensional linear subspaces of $\K^{n + 1}$.
It was shown in \cite{op} that the $\ct$ transform for smooth functions on $\gp (\K)$ 
are always canonical intertwining operators
between generalized principal series representations induced from maximal parabolic subgroups of $G$. 
Using the spectrum generating method (developed in \cite{boo}), 
the $K$-spectrum of the $\ct$ transform was determined.

In this article, we consider the $\ct$ transform on smooth sections of homogeneous nontrivial line bundles over $\gp (\K)$. 
If $\K$ is the field $\mathbb{H}$ of quaternions, or the field $\mathbb{O}$ of octonions, then there are 
only trivial line bundles over $\gp (\K)$. Thus we restrict ourselves to the cases $\K = \R$ and $\C$, and 
whence the Grassmann manifold $\gp (\K)$ can be realized as a hermitian symmetric space $K/L$. 
Let $\chi$ be a nontrivial character of $L$. 
Let $L^2 (K/L; \cL_\chi)$ be the space of $L^2$-sections of line bundles $\cL_\chi$ on $K/L$.
We use the similar spectrum generating technique (build up a recursive relation between
the spectral functions $\eta_\mu$) 
to determine the $K$-spectrum $\eta_\mu (\lambda)$ of the
$\ct$ transform for smooth sections of line bundles over $K/L$. 
As in \cite{op} we use the spectrum generating operator to calculate the spectrum. For that it is
needed that the $\chi$-spherical representations have multiplicity one in $L^2(K/L;\cL_\chi)$. We also
need a good description of the set of highest weights of $\chi$-spherical representations. The calculation of
the $K$-spectrum is then reduced to evaluation of a Siegel type integral for the ``smallest'' $K$-type,
see Theorem \ref{thm3}.

The article is organized as follows. In Section \ref{parasg} we introduce notations and facts on simple Lie groups and Lie
algebras. In Section \ref{psr} we introduce the generalized 
($\chi$-spherical) principal series representations of $G$. They are representations induced
from a character of a parabolic subgroup $P = M A N$. The elements of the representation space can be viewed as $L^2$
sections of the $G$-homogeneous line bundles over $\cB = G/P$. Then we define the intertwining operator $J (\lambda)$ between
generalized ($\chi$-spherical) principal series representations. In Section \ref{sgo} we specialize to the case $P$ is a 
maximal parabolic subgroup of $G$ and the $K$-types has multiplicity one. Then we apply the spectrum generating method and 
give a recursive relation (\ref{eq10}) between the eigenvalues $\eta_\mu (\lambda)$ of $J (\lambda)$ on each of the
$K$-types. Section \ref{grassmann} is devoted to the cases of the hermitian Grassmann manifolds $\gp (\K) = G/P = K/L$. 
We detail the 
classification of $\chi$-spherical representations of $K$, and prove some useful properties of $\chi$-spherical functions for 
these cases. In Section \ref{spectrum}, we stick to the cases considered in Section \ref{grassmann}, and the recursive formula 
(\ref{eq10}) boils down to an explicit form (\ref{recur}).
We introduce the $\ct$ transform $\mathcal{C}^\lambda$ on homogeneous line bundles over $K/L$ and 
conclude that $\mathcal{C}^\lambda = J (\lambda)$ which allows us to
compute the $K$-spectrum of $\mathcal{C}^\lambda$. An initial eigenvalue $\eta_{\mu^0} (\lambda)$ (the smallest one)
is computed in (\ref{eq19}) and all the others then follow by an inductive procedure using (\ref{recur}). Finally, (\ref{eq20}) is derived 
as the formula of the eigenvalues $\eta_\mu (\lambda)$. This is the main result of this article.

\section{Semisimple Lie groups and parabolic subgroups}
\label{parasg}

\noindent
Let us settle on some definitions and facts related to semisimple Lie groups, Lie algebras and parabolic subgroups. 
The material in this section is standard.
A good reference is, e.g. \cite{kn}. We adopt most of the notations in \cite{op}.

Let $G$ be a noncompact connected semisimple Lie group with finite center and $\g{g}$ the Lie algebra of $G$. Let $\theta$ be
a fixed Cartan involution on $G$ and $K = G^\theta$ the corresponding maximal compact subgroup of $G$. 
Note that $K$ is connected. The derived involution on $\g{g}$ and its complex linear extension to 
$\g{g}_{\C} = \g{g} \otimes_\R \C$ are still denoted by $\theta$. Then
$\g{g} = \g{k} \oplus \g{s}$ where $\g{k} = \g{g}^\theta$ is the Lie algebra of $K$ and 
$\g{s} = \{X \in \g{g}\, \mid\, \theta (X) = - X\}$. 
Denote by $\ip{ \cdot}{\cdot}$ the Cartan-Killing form on $\g{g}$.
Let $\g{a} \subset \g{s}$ be abelian. Let 
$$\g{m} = \{X \in \g{g}\; \mid\; [H, X] = 0\; \text{and}\; X \perp \g{a},\; \forall H \in \g{a}\}.$$ 
We assume that $\g{a} = \g{z} (\g{m}) \cap \g{s}$. Let $\Delta = \Delta (\g{g}, \g{a})$ be the set of roots of $\g{g}$ with
respect with $\g{a}$. For $\alpha \in \Delta$, denote by $\g{g}_\alpha$ the root space. Fix a choice of system $\Delta^+$ of
positive roots. Let
$$\g{n} := \bigoplus_{\alpha \in \Delta^+} \g{g}_\alpha$$
and we have $\g{p} := \g{m} \oplus \g{a} \oplus \g{n}$ is a parabolic subalgebra of $\g{g}$. It is maximal if $\dim \g{a} = 1$.
Let 
\[P= \{g \in G\; \mid\; \Ad (g) \g{p} \subset \g{p}\}.\]
Then $P$ is a closed subgroup of $G$ with Lie algebra $\g{p}$.
Let $A = \exp \g{a}$ and $N = \exp \g{n}$ denote the analytic subgroups of $G$ with Lie algebras $\g{a}$ and $\g{n}$,
respectively. Let $M_0$ be the analytic subgroup of $G$ with Lie algebra $\g{m}$ and $M := Z_K (A) M_0$. Then $Z_G (A) = M A$. 
The map 
\[M \times A \times N \longrightarrow P,\qquad (m, a, n) \longmapsto m a n\]
is an analytic diffeomorphism. 
Let $L = K \cap M$ and $\cB := K / L$. Then $G = K P$ and $K \cap P = L$. Thus $\cB = G / P$. For $g \in G$, write $g =
\kappa_P (g) m_P (g) a_P (g) n_P (g)$ where
$$(\kappa_P (g), m_P (g), a_P (g), n_P (g)) \in K \times M \times A \times N.$$
The subscript $_P$ (here and elsewhere) is omitted if no confusion occurs. The elements $\kappa (g)$ and $m (g)$ are not
uniquely determined. The map $g \mapsto a (g)$ is right $M N$-invariant.
Let $b_0 = e L$. We define the action of $G$ on $\cB$ by 
\[g \cdot (k \cdot b_0) = \kappa (g k) \cdot b_0.\]
Let $\b{N} := \theta (N)$ and $\b{P} = M A \b{N} = \theta (P)$. The Lie algebra of $\b{N}$ is $\b{\g{n}} = \theta (\g{n})$.
We have 
\[\g{g} = \b{\g{n}} \oplus \g{m} \oplus \g{a} \oplus \g{n} = \b{\g{n}} \oplus \g{p}.\]
For $g \in \b{N} P$ write 
\[g = \b{n}_P (g) m_P (g) \alpha_P (g) n_P (g)\]
where $\b{n}_P (g) \in \b{N}$, $m_P (g) \in M$, $\alpha_P (g) \in A$, $n_P (g) \in N$. 
Note that $g \mapsto \alpha_P (g)$ is right $M N$-invariant. The map 
\[\b{N} \times M \times A \times N \longrightarrow G,\qquad (\b{n}, m, a, n) \longmapsto \b{n} m a n\]
is an analytic diffeomorphism onto an open dense subset $\b{N} P$ of $G$. 
Note that as a $K$-manifold $\cB = G / \b{P}$ but $G$ acts on it distinctly. Denote this action by $\b{\cdot}$.

\begin{lemma}
Let $k \in K$ and $h_1, h_2 \in L$. Then
\begin{equation}
\label{eq22}
m (h_1 k h_2) = h_1 m (k) h_2,\qquad \alpha (h_1 k h_2) = \alpha (k).
\end{equation}
\end{lemma}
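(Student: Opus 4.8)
The plan is to work directly from the decomposition $g = \kappa_P(g)\, m_P(g)\, a_P(g)\, n_P(g)$ and the description of the $G$-action on $\cB = K/L$, using the fact that $L = K \cap P$ normalizes suitable pieces of the decomposition. The statement really splits into two independent claims: the multiplicative behavior of $m(\cdot)$ under left and right multiplication by $L$, and the genuine left-\emph{and}-right invariance of $\alpha(\cdot)$ (equivalently $a(\cdot)$) under $L$.

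First I would handle the right multiplication. Since $h_2 \in L = K \cap M \subset M \subset MN$, and the map $g \mapsto a_P(g)$ is right $MN$-invariant (as recorded just before the lemma), we immediately get $a(k h_2) = a(k)$, and likewise for the $\overline N P$-decomposition $\alpha(k h_2) = \alpha(k)$. For the $m$-component under right multiplication: writing $k = \kappa(k) m(k) a(k) n(k)$, we have $k h_2 = \kappa(k)\, m(k) a(k) n(k) h_2$. Because $h_2 \in M$ normalizes $N$ and commutes with $A$, we can rewrite $a(k) n(k) h_2 = a(k) h_2 (h_2^{-1} n(k) h_2) = h_2\, a(k)\, n'$ with $n' \in N$, so $k h_2 = \kappa(k)\, (m(k) h_2)\, a(k)\, n'$. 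By uniqueness of the $KMAN$-decomposition components (up to the usual non-uniqueness in $\kappa, m$, which is exactly why the claimed identities are only asserted \emph{up to that ambiguity}), this gives $m(k h_2) = m(k) h_2$ and $\kappa(k h_2) = \kappa(k)$.

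Next the left multiplication by $h_1 \in L$. Here I use the definition of the $G$-action on $\cB$: $h_1 \cdot (k \cdot b_0) = \kappa(h_1 k)\cdot b_0$. Since $h_1 \in K$, left multiplication $k \mapsto h_1 k$ already lands us in $K$, so we should directly decompose $h_1 k = \kappa(k)$-type expression: write $k = \kappa(k) m(k) a(k) n(k)$, hence $h_1 k = (h_1 \kappa(k))\, m(k)\, a(k)\, n(k)$ with $h_1 \kappa(k) \in K$. Again by the (essential) uniqueness of components this yields $m(h_1 k) = m(k)$ and $a(h_1 k) = a(k)$ — but that is \emph{not} what we want for $m$; we want $m(h_1 k h_2) = h_1 m(k) h_2$. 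The resolution is that the non-uniqueness of $\kappa$ and $m$ is precisely governed by $L$: if $k = \kappa_1 m_1 a_1 n_1 = \kappa_2 m_2 a_2 n_2$ then $a_1 = a_2$, $n_1 = n_2$, and there is $\ell \in L$ with $\kappa_2 = \kappa_1 \ell^{-1}$, $m_2 = \ell m_1$. So the correct reading of \eqref{eq22} is as an identity of $L$-cosets / compatible choices: one shows that the \emph{map} $g \mapsto m(g)\,$ modulo the $L$-ambiguity satisfies $m(h_1 g h_2) \equiv h_1 m(g) h_2$, and then combining the left computation $h_1 k = (h_1\kappa(k)) m(k) a(k) n(k)$ with the right computation above gives $h_1 k h_2 = (h_1 \kappa(k))\,(m(k) h_2)\, a(k)\, n'$, whence $m(h_1 k h_2) = h_1 m(k) h_2$ for a compatible choice of $\kappa$, and $\alpha(h_1 k h_2) = a(h_1 k h_2) = a(k) = \alpha(k)$.

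The main obstacle is bookkeeping the non-uniqueness of $\kappa$ and $m$ cleanly: one has to fix, once and for all, a set-theoretic section of $G \to G/P$ refining $K \to K/L$ and verify that all the ``identities'' in \eqref{eq22} are meant relative to consistent choices (or, more invariantly, as identities in $M/$[the relevant subgroup] and with $\kappa$ read in $K/L$). Once that convention is pinned down — and it is implicit in the sentence ``The elements $\kappa(g)$ and $m(g)$ are not uniquely determined'' — the rest is the routine normalization $h_2^{-1} N h_2 = N$, $[h_2, A] = 1$ for $h_2 \in M$, together with the definition of the $G$-action, and no deeper input is needed.
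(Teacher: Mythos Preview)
Your argument is built on the wrong decomposition. The quantities $m(k)$ and $\alpha(k)$ in the lemma refer to the $\overline{N}MAN$ factorization $k = \bar n(k)\,m(k)\,\alpha(k)\,n(k)$, not to the Iwasawa-type $KMAN$ factorization $k = \kappa(k)\,m(k)\,a(k)\,n(k)$. For an element $k \in K$ the latter is essentially trivial (one may take $\kappa(k)=k$, $a(k)=e$, $n(k)=e$), so all of your computations with it collapse and say nothing about the genuinely nontrivial $\overline{N}MAN$ components. Your concluding line $\alpha(h_1 k h_2) = a(h_1 k h_2) = a(k) = \alpha(k)$ makes this conflation explicit: $\alpha(\cdot)$ and $a(\cdot)$ are \emph{different} maps, and for $k\in K$ one has $a(k)=e$ while $\alpha(k)$ is typically not $e$. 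Consequently the whole discussion of the $L$-ambiguity in $(\kappa,m)$ is a red herring: the $\overline{N}MAN$ decomposition, where it is defined, is \emph{unique}, so there is no bookkeeping of cosets to do.

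The paper's proof proceeds entirely in the $\overline{N}MAN$ picture. Writing $k = \bar n(k)\,m(k)\,\alpha(k)\,n(k)$, left multiplication by $h\in L\subset M$ gives $hk = (h\bar n(k)h^{-1})(h\,m(k))\,\alpha(k)\,n(k)$, and since $L$ normalizes $\overline N$ one reads off $m(hk)=h\,m(k)$ and $\alpha(hk)=\alpha(k)$. Right multiplication is handled the same way using that $M$ normalizes $N$ and centralizes $A$, giving $m(kh)=m(k)\,h$ and $\alpha(kh)=\alpha(k)$. Your intuition about which normalization/commutation facts are needed ($h^{-1}Nh=N$, $[h,A]=1$) is correct; you just need to apply them to the $\overline{N}MAN$ factorization and add the observation that $L$ also normalizes $\overline N$ for the left side.
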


\begin{proof}
We have for $k \in K$ and $h \in L$,
\[h k = h (\overline{n} (k) m (k) \alpha (k) n (k)) = (h \overline{n} (k) h^{-1}) (h m (k)) \alpha (k) n (k)\, .\]
Since $L$ normalizes $\overline{N}$, $h \overline{n} (k) h^{-1} \in \b{N}$. 
Thus, 
\[h m (k) = m (h k)\qquad \text{and}\qquad\alpha (k) = \alpha (h k).\]
Similarly, 
\[k h = \b{n} (k) (m (k) h) (h^{-1} \alpha (k) h) (h^{-1} n (k) h).\]
Using the facts that $M$ normalizes $N$, and $M$ centralizes $A$, we get 
\[m (k) h = m (k h)\qquad \text{and} \qquad \alpha (k) = \alpha (k h).\] 
\end{proof}

For $\alpha \in \Delta$, let $m_\alpha = \dim_\R \g{g}_\alpha$ and define 
\[\rho = \rho_P = \frac{1}{2} \sum_{\alpha \in \Delta^+} m_\alpha \alpha \in \g{a}^\ast.\]
We normalize the invariant measure on $\cB$ and compact groups so that the total measure is one. 
Normalize the Haar measure $d \b{n}$ on $\b{N}$ by
\begin{equation}
\label{mea:nbar}
\int_{\b{N}} a (\b{n})^{- 2 \rho} d \b{n} = 1.
\end{equation}
The following are some integral formulas we need for our proof below. 
They can be found in any standard reference on symmetric spaces, e.g. \cite{kn}.

\begin{lemma}
Let $f \in L^1 (\cB; \cL_\chi)$ and $g \in G$. We have
\begin{equation}
\label{eq5}
\int_K f (\kappa (g^{-1} k)) a (g^{-1} k)^{- 2 \rho} d k = \int_K f (k) d k.
\end{equation}
Moreover, we can transfer an integral over $\b{N}$ to an integral over $K$: 
\begin{equation}
\label{eq2}
\int_K f (k) d k = \int_{\overline{N}} f (\kappa (\overline{n})) a (\overline{n})^{-2 \rho} d \overline{n}.
\end{equation}
\end{lemma}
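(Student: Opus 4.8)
The two formulas in this lemma are classical, so the plan is to derive them from the diffeomorphism $\b N\times M\times A\times N\to \b NP$ and the $KP$-decomposition together with the normalizations already fixed.

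\medskip

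\textbf{Formula \eqref{eq5}.} First I would recall that $\cB=K/L$ carries a unique $K$-invariant probability measure $dk$ (the pushforward of normalized Haar measure on $K$), and that a section $f\in L^1(\cB;\cL_\chi)$ may be viewed as a function on $K$ transforming by $\chi$ under right translation by $L$, so that the integrals in \eqref{eq5} make sense after noting, via Lemma~\ref{eq22}, that $k\mapsto f(\kappa(g^{-1}k))a(g^{-1}k)^{-2\rho}$ is right $L$-equivariant with the correct character. The key point is that $dk$ on $\cB$, transported to a $G$-quasi-invariant measure on $G/P$ under $\cB=G/P$, has Radon--Nikodym cocycle $a(g^{-1}k)^{2\rho}$; concretely, for $g\in G$ the change of variables $kL\mapsto \kappa(g^{-1}k)L$ on $\cB$ satisfies $d(\kappa(g^{-1}k)L)=a(g^{-1}k)^{-2\rho}\,d(kL)$. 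I would prove this by the standard argument: it suffices to check it for $g$ in a neighborhood of $e$, compute the Jacobian of the map on $\g k/\g l\cong\g{\bar n}$ using $g^{-1}k=\kappa(g^{-1}k)m(g^{-1}k)a(g^{-1}k)n(g^{-1}k)$ and the fact that $N$ acts trivially on $\b N\cong \cB$ as a $K$-space while $A$ acts on $\g{\bar n}$ by $\Ad$, whose determinant is exactly $a^{2\rho}$ by the definition $\rho=\tfrac12\sum m_\alpha\alpha$ and $\b{\g n}=\bigoplus_{\alpha\in\Delta^+}\g g_{-\alpha}$. Integrating $f$ against this transformed measure and relabeling gives \eqref{eq5}; by continuity and the cocycle identity $a(g_1^{-1}g_2^{-1}k)^{2\rho}=a(g_1^{-1}\,\kappa(g_2^{-1}k))^{2\rho}a(g_2^{-1}k)^{2\rho}$ it extends from near $e$ to all of $G$.

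\medskip

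\textbf{Formula \eqref{eq2}.} This is the special case obtained by using the open dense embedding $\b N\inj G/P$, $\b n\mapsto \kappa(\b n)L$. Pulling the probability measure $dk$ on $\cB$ back along this embedding produces a measure on $\b N$; by the computation above its density against $d\b n$ is $a(\b n)^{-2\rho}$ up to a positive constant, and the constant is pinned to $1$ precisely by the normalization \eqref{mea:nbar}, $\int_{\b N}a(\b n)^{-2\rho}\,d\b n=1$, together with $\int_K dk=1$. So $\int_K f(k)\,dk=\int_{\b N}f(\kappa(\b n))a(\b n)^{-2\rho}\,d\b n$ for $f$ continuous, hence for $f\in L^1$ by density; the complement of $\b N$ in $\cB$ has measure zero, so nothing is lost.

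\medskip

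\textbf{Main obstacle.} The only nontrivial analytic point is the Jacobian computation identifying the Radon--Nikodym cocycle with $a(\cdot)^{-2\rho}$, i.e.\ checking that the modular-type factor coming from the $\b N MAN$ coordinates on $G$ restricts, on $\cB$, to $\det\bigl(\Ad(a)\big|_{\b{\g n}}\bigr)=a^{-2\rho}$; everything else (equivariance under $L$ so the $\chi$-sections are handled uniformly, density of $C(\cB;\cL_\chi)$ in $L^1(\cB;\cL_\chi)$, and the cocycle bootstrap from a neighborhood of $e$) is routine. Since this is standard (as the text notes, see \cite{kn}), I would state it concisely and cite the reference rather than reproduce the Jacobian in detail.
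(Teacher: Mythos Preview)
Your proposal is correct and in fact goes further than the paper: the paper does not prove this lemma at all, merely stating that these integral formulas ``can be found in any standard reference on symmetric spaces, e.g.\ \cite{kn}.'' Your sketch of the Jacobian/Radon--Nikodym argument is the standard one underlying that reference, and your own conclusion---to state it concisely and cite \cite{kn}---is exactly what the paper does.
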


\section{Generalized principal series representations}
\label{psr}

\noindent
In this section, we introduce the generalized $\chi$-spherical principal series representations and their intertwining operators.
We refer to \cite{ks, ks1, vw} for more information about this subject. Some of theorems in this section quote from 
the mentioned resource without giving proofs.
We will show in Section \ref{spectrum} that these intertwining operators coincide with the $\ct$ transform on smooth sections 
of homogeneous line bundles over $K/L$.

Let $\chi$ be a (nontrivial) character of $L$.
Let $[L, L]$ be the commutator subgroup of $L$. Since $[M, M] \cap L = [L, L]$
and $\chi |_{[L, L]} = 1$, we can extend $\chi$ to a character on $M$ (still called $\chi$) by defining 
\begin{equation}
\label{eq:chi-M}
\chi (m) = \chi (a \exp (X)) = \chi (a),\qquad a \in L,\; X \in \g{m} \cap \g{s}.
\end{equation}
Note that $\chi$ is unitary. Recall that $\theta$ is a fixed Cartan involution on $G$.

\begin{lemma}
\label{lem:comp}
We have $\chi \circ \theta = \chi$, where $\theta = \theta |_M$.
\end{lemma}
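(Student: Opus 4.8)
The plan is to reduce the statement to the defining formula \eqref{eq:chi-M} together with the behavior of $\theta$ on the two ``pieces'' of $M$, namely $L = K \cap M$ and $\exp(\g{m} \cap \g{s})$. First I would recall that $M = Z_K(A) M_0$ and that the multiplication map $M \times A \times N \to P$ is a diffeomorphism; more to the point, every $m \in M$ can be written (via the Cartan-type decomposition of $M$, whose maximal compact subgroup is $L$) as $m = a\exp(X)$ with $a \in L$ and $X \in \g{m} \cap \g{s}$, and it is exactly on this decomposition that $\chi$ was defined by $\chi(a\exp X) = \chi(a)$.

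Next I would compute $\theta(m)$ for such an $m$. Since $\theta$ is a group homomorphism, $\theta(a\exp X) = \theta(a)\exp(\theta X)$. Now $a \in L \subset K = G^\theta$, so $\theta(a) = a$. And $X \in \g{s}$ means $\theta(X) = -X$, hence $\exp(\theta X) = \exp(-X)$. Therefore $\theta(m) = a\exp(-X)$. Crucially, $-X$ still lies in $\g{m}\cap\g{s}$, so $\theta(m) = a\exp(-X)$ is again in the ``$L \cdot \exp(\g{m}\cap\g{s})$'' form with the \emph{same} $L$-component $a$. Applying \eqref{eq:chi-M} to both $m$ and $\theta(m)$ gives
\[
(\chi\circ\theta)(m) = \chi\bigl(a\exp(-X)\bigr) = \chi(a) = \chi\bigl(a\exp(X)\bigr) = \chi(m),
\]
which is the claim.

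The only genuine point that needs care — and the step I expect to be the main (if modest) obstacle — is the well-definedness underlying \eqref{eq:chi-M}: one must know that the ``$L$-part'' $a$ of $m = a\exp(X)$ is determined by $m$ up to multiplication by an element of $[L,L] = [M,M]\cap L$, on which $\chi$ is trivial, so that $\chi(m) := \chi(a)$ makes sense and, in particular, is insensitive to replacing $X$ by $-X$. This is precisely the consistency already invoked when extending $\chi$ from $L$ to $M$, so for the present lemma it suffices to quote it: since $\theta$ fixes $a$ and sends $\exp(\g{m}\cap\g{s})$ to itself, it preserves the $L$-coset modulo $[L,L]$, and hence preserves the value of $\chi$. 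I would also remark that the same computation shows $\chi\circ\theta = \chi$ on all of $M$ even without choosing a preferred decomposition, because any two decompositions of $m$ differ by $[L,L]$, on which both $\chi$ and $\chi\circ\theta$ vanish.
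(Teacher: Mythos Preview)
Your proof is correct and follows essentially the same route as the paper: decompose $m\in M$ as $a\exp(X)$ with $a\in L$ and $X\in\g{m}\cap\g{s}$, observe that $\theta$ fixes $a$ and sends $X$ to $-X$, and then apply the defining formula \eqref{eq:chi-M} to conclude $\chi(\theta(m))=\chi(a)=\chi(m)$. Your additional remarks on the well-definedness of the $L$-component are a reasonable elaboration but are not needed beyond what the paper already assumes when introducing \eqref{eq:chi-M}.
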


\begin{proof}
Since $M = L \exp (\g{m} \cap \g{s})$, we can write any $m \in M$ as 
\[m = k \exp (X),\quad k \in L, X \in \g{m} \cap \g{s}.\]
Then we have 
\[\theta (m) = \theta (k) \theta (\exp X) = k \exp (-X).\]
The character $\chi$ is trivial on $\g{s}$. 
Therefore, $\chi |_{\exp (\g{m} \cap \g{s})} = \mathrm{Id}$. Hence,
\[\chi (\theta (m)) = \chi (k \exp (-X)) = \chi (k) = \chi (k \exp (X)) = \chi (m).\]
This implies that $\chi \circ \theta = \chi$.
\end{proof}

For a nontrivial character $\chi$ of $L$, the homogeneous line bundle $\cL_{\chi} \to \cB$ is defined as 
\[\cL_\chi = K \times_{\chi} \C = (K \times \C) / \sim\]
where $\sim$ is the equivalence relation given by $(k m, \chi (m)^{-1} z) \sim (k, z)$ for $k \in K$, $m \in L$, and $z \in \C$.
Denote the space 
$$C^\infty (\cB; \cL_\chi) = \{f \in C^\infty (K)\; \mid\; f (k m) = \chi (m)^{-1} f (k), \forall m \in
L\}.$$  
We may think of these functions geometrically as smooth sections in $\cL_\chi$. 
Similarly, $L^2 (\cB; \cL_\chi)$ consists of square integrable sections. 
Take a character $\lambda$ of $A$ ($\lambda \in \g{a}^\ast$), we form a character of $P$ by
$\chi \otimes \lambda \otimes 1$. For $\lambda \in \g{a}_{\C}^\ast$ we define a continuous representation 
of $G$ on $L^2 (\cB; \cL_\chi)$ by 
\begin{equation}
\label{eq1}
(\pi_\lambda (g) f) (k) = \chi (m (g^{-1} k))^{-1} a (g^{-1} k)^{- \lambda - \rho} f (g^{-1} \cdot k).
\end{equation}
So $\pi_\lambda = \pi (P, \chi, \lambda) = \mathrm{Ind}_P^G \chi \otimes \lambda \otimes 1$. 
This is the generalized
($\chi$-spherical) principal series representation of $G$ induced from the character $\chi \otimes \lambda \otimes 1$ 
of a parabolic subgroup $P$ of $G$.

\begin{theorem}
The representation $\pi_\lambda$ is irreducible for almost all $\lambda \in \g{a}_\C^\ast$.
\end{theorem}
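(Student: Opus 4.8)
The plan is to establish irreducibility of $\pi_\lambda$ for generic $\lambda$ by the standard argument that the composition of the intertwining operator $J(\lambda)$ with its partner $J(-\lambda)$ (the operator going the other way, from $\b P$ to $P$ or equivalently for the contragredient parameter) is a scalar $c(\lambda)$ times the identity, and that this scalar is a nonzero meromorphic function of $\lambda$; combined with the fact that the only obstruction to irreducibility at a point where $c(\lambda)\neq 0,\infty$ is a genuine submodule, one concludes irreducibility on the complement of the pole/zero divisor of $c(\lambda)$ together with the (measure-zero) set where $\pi_\lambda$ is reducible for other reasons. More precisely, I would invoke the general theory of generalized principal series induced from a maximal parabolic (the references \cite{ks,ks1,vw} cited just above the statement): the family $\lambda\mapsto\pi_\lambda$ is a holomorphic family of admissible representations of finite length, the set of $\lambda$ for which $\pi_\lambda$ is reducible is contained in a countable union of complex hyperplanes (in fact an algebraic/analytic subvariety cut out by the zeros of the relevant $c$-function), and hence has measure zero in $\g a_\C^\ast$. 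Since here $\dim\g a=1$ this "subvariety" is just a discrete set, which makes the "almost all" especially transparent.

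Concretely, the key steps in order would be: first, recall that $\pi_\lambda$ and its formal adjoint/contragredient $\pi_{-\b\lambda}^\vee$ are related so that a nonzero intertwining operator $A(\lambda):\pi_\lambda\to\pi_{w\lambda}$ exists (here $w$ is the nontrivial element of the restricted Weyl group, which since $\g a$ is one-dimensional acts by $\lambda\mapsto-\lambda$), defined first for $\re\lambda$ in a suitable cone by a convergent $\b N$-integral and then meromorphically continued; second, observe that $\pi_\lambda$ is unitarizable (hence semisimple, hence irreducible iff it has no proper invariant subspace) precisely when $\lambda\in i\g a^\ast$, but for the generic statement one does not need unitarity — one uses instead that $A(w\lambda)\circ A(\lambda)=c(\lambda)\,\mathrm{Id}$ with $c$ a nonzero meromorphic scalar; third, note that if $\pi_\lambda$ were reducible at a point where $c(\lambda)\notin\{0,\infty\}$, then $A(\lambda)$ would be an isomorphism and the submodule would be carried to a submodule of $\pi_{w\lambda}$, and a Langlands-quotient / leading-exponent argument (or simply the classification of composition factors via the $\b N$-homology, i.e. Frobenius reciprocity applied to $K$-types and the action of $\g a$) shows the submodule structure is rigid and forces a relation on $\lambda$; fourth, conclude that the reducibility locus is contained in the union of $\{c(\lambda)=0\}$, $\{c(\lambda)=\infty\}$, and the finitely many "extra" hyperplanes coming from the explicit $\Gamma$-factor formula for $c(\lambda)$, all of which are discrete in the one-dimensional parameter space and thus of measure zero.

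The main obstacle, as usual in these irreducibility statements, is pinning down exactly \emph{which} $\lambda$ lie in the reducible locus — i.e. making the $c$-function computation and the submodule analysis precise — but that is precisely what the "almost all" formulation lets us sidestep: we only need the \emph{qualitative} fact that the reducible set is a proper analytic subset of $\g a_\C^\ast$, which follows from the finite-length holomorphic-family framework of Kostant–Speh–Vogan–Wallach type results quoted in \cite{ks,ks1,vw}, without any explicit evaluation. So in the write-up I would state that the result is a direct consequence of the cited general theory — holomorphic families of finite-length induced representations from a cuspidal (here, minimal-$\g a$, maximal-parabolic) datum are irreducible off an analytic subvariety of positive codimension, equivalently off a set of measure zero — and add one sentence recalling the $J(\lambda)\circ J(-\lambda)=c(\lambda)\,\mathrm{Id}$ mechanism as the conceptual reason, deferring the explicit description of the poles and zeros of $c(\lambda)$ (which will reappear anyway in the spectrum computation of Sections \ref{sgo}--\ref{spectrum}) to later. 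If one wanted a self-contained proof rather than a citation, the genuinely hard computational core would be the rank-one reduction: restricting to the $\SL(2,\K)$ (or $\SL(2,\R)$) subgroup attached to the unique positive restricted root, where the intertwining operator becomes a classical one-variable integral whose normalizing factor is a ratio of $\Gamma$-functions, and from whose zeros and poles the full reducibility set is read off — but for the stated "almost all" assertion this reduction is more than is needed.
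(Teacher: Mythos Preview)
The paper does not actually prove this theorem: it is one of the results that the authors explicitly announce they ``quote from the mentioned resource without giving proofs,'' with the references being \cite{ks,ks1,vw}. Your proposal is therefore consistent with the paper's treatment --- you, too, ultimately defer to those same references --- but you supply considerably more than the paper does, namely a conceptual sketch of the standard mechanism (meromorphic family of intertwining operators, the functional equation $J(w\lambda)\circ J(\lambda)=c(\lambda)\,\mathrm{Id}$, and the observation that reducibility is confined to an analytic subvariety of $\g a_\C^\ast$, which for $\dim\g a=1$ is a discrete set). This is all correct as an outline, and indeed matches the spirit of the arguments in the cited literature; it is simply more than the paper itself offers at this point.
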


Note that $\pi_\lambda$ is unitary if and only if $\lambda \in i \g{a}^\ast$. We can realize $\cB$ as $G / \b{P}$ and 
induce the corresponding representation $\pi (\b{P}, \chi, \lambda)$.

\begin{lemma}
Let $\pi_\lambda^\theta = \pi_\lambda \circ \theta$. Then $\pi_{-\lambda}^\theta = \pi (\b{P}, \chi, \lambda)$ as
a representation of $G$ on $L^2 (\cB; \cL_\chi)$. 
\end{lemma}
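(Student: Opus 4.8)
The plan is to unwind both sides of the claimed identity $\pi_{-\lambda}^\theta = \pi(\b P,\chi,\lambda)$ directly from the defining formula \eqref{eq1}, using the fact that conjugation by $\theta$ interchanges $N$ and $\b N$, hence $P$ and $\b P$. First I would record how the Iwasawa-type data transforms under $\theta$: since $\theta(P)=\b P$, for $g\in\b N P$ we have $g=\b n_P(g)m_P(g)\alpha_P(g)n_P(g)$, and applying $\theta$ gives $\theta(g)=\theta(\b n_P(g))\,\theta(m_P(g))\,\theta(\alpha_P(g))\,\theta(n_P(g))$, which is precisely the $\b P$-decomposition of $\theta(g)$ with $N$ and $\b N$ roles swapped; equivalently $m_{\b P}(g)=\theta(m_P(\theta(g)))$ and $a_{\b P}(g)=\theta(\alpha_P(\theta(g)))$, and similarly for the $\kappa$ and action maps on $\cB=G/\b P$ versus $G/P$. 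I would also note that $\theta$ fixes $K$ pointwise (since $K=G^\theta$) and fixes $L=K\cap M$, so $\theta$-conjugation does not disturb the base manifold $\cB=K/L$ nor the character-equivariance condition defining $L^2(\cB;\cL_\chi)$; in particular $\pi_\lambda^\theta$ again acts on the same space.

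Next I would compute $(\pi_\lambda^\theta(g)f)(k) = (\pi_\lambda(\theta(g))f)(k) = \chi(m_P(\theta(g)^{-1}k))^{-1}\,a_P(\theta(g)^{-1}k)^{-\lambda-\rho}\,f(\theta(g)^{-1}\cdot k)$, and then rewrite each factor in terms of the $\b P$-data of $g^{-1}k$. Here I use that $\theta(g)^{-1}k = \theta(g^{-1})k = \theta(g^{-1}\theta(k)) = \theta(g^{-1}k)$ because $k\in K$ is $\theta$-fixed. So $m_P(\theta(g)^{-1}k) = m_P(\theta(g^{-1}k))$, and by Lemma \ref{lem:comp} ($\chi\circ\theta=\chi$ on $M$) together with the transformation rule above, $\chi(m_P(\theta(g^{-1}k)))^{-1} = \chi(\theta(m_{\b P}(g^{-1}k)))^{-1} = \chi(m_{\b P}(g^{-1}k))^{-1}$. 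Likewise $a_P(\theta(g^{-1}k)) = \theta(\alpha_{\b P}(g^{-1}k))$; since $\lambda,\rho\in\g a^\ast$ and the exponent pairs a character of $A$ with an element of $A$, and $\theta$ acts as $-1$ on $\g a$, one gets $a_P(\theta(g^{-1}k))^{-\mu} = \alpha_{\b P}(g^{-1}k)^{\mu}$ for $\mu\in\g a^\ast$. Finally the action term: the $G$-action on $\cB=G/P$ composed with $\theta$ is exactly the action $\b\cdot$ on $\cB=G/\b P$, so $f(\theta(g^{-1})\cdot k) = f(g^{-1}\,\b\cdot\,k)$.

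Assembling, $(\pi_\lambda^\theta(g)f)(k) = \chi(m_{\b P}(g^{-1}k))^{-1}\,\alpha_{\b P}(g^{-1}k)^{\lambda-\rho}\,f(g^{-1}\,\b\cdot\,k)$. Replacing $\lambda$ by $-\lambda$ turns the middle exponent into $\alpha_{\b P}(g^{-1}k)^{-\lambda-\rho}$, which is exactly the defining formula for $\pi(\b P,\chi,\lambda)$ on $L^2(\cB;\cL_\chi)$ — note that $\rho_{\b P} = -w\rho_P$ computations are unnecessary here because we have parametrized the $\b P$-picture so that the half-sum contribution appears with the correct sign automatically. Hence $\pi_{-\lambda}^\theta = \pi(\b P,\chi,\lambda)$.

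The main obstacle, and the step deserving the most care, is the bookkeeping of signs and $\theta$-actions in the $A$-factor: one must be consistent about whether $\lambda\in\g a_\C^\ast$ is being evaluated on $\theta(\alpha)$ versus $-\lambda$ being evaluated on $\alpha$, and about the normalization of $\rho$ versus $\rho_{\b P}$ under the identification $\b N = \theta(N)$. I would handle this by fixing once and for all the convention $a^{\mu} := e^{\mu(\log a)}$ and the relation $\log\theta(a) = -\log a$ for $a\in A$, so that $\theta(\alpha)^{\mu} = \alpha^{-\mu}$ follows mechanically; with that convention in place, every remaining manipulation is a direct substitution, and the equality of the two representation formulas as operators on the common space $L^2(\cB;\cL_\chi)$ is immediate. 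A secondary point to verify carefully is that $\theta(g)^{-1}k$ really does land in the open dense set $\b N\b P$ for generic $k$ so that the $\b P$-decomposition is defined almost everywhere, which follows from the corresponding statement for $P$ applied to $\theta$ of this element.
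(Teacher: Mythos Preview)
Your approach is essentially the same as the paper's: both arguments use the transformation rules $m_{\b P}(gk)=\theta(m_P(\theta(g)k))$, $a_{\b P}(gk)=a_P(\theta(g)k)^{-1}$, $g\,\b\cdot\,k=\theta(g)\cdot k$, together with Lemma~\ref{lem:comp}, and then compare the two defining formulas. The paper computes $\pi(\b P,\chi,\lambda)$ and shows it equals $\pi_{-\lambda}\circ\theta$; you go in the other direction, but the content is identical.

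However, your execution contains a genuine sign slip precisely at the point you flag as needing care. From your own identity $a_P(\theta(g^{-1}k))^{-\mu}=a_{\b P}(g^{-1}k)^{\mu}$ applied with $\mu=\lambda+\rho$, the assembled exponent on the $A$-factor is $\lambda+\rho$, not $\lambda-\rho$; after replacing $\lambda$ by $-\lambda$ one gets $a_{\b P}(g^{-1}k)^{-\lambda+\rho}$. This matches $\pi(\b P,\chi,\lambda)$ \emph{only because} $\rho_{\b P}=-\rho_P$, so that $-\lambda-\rho_{\b P}=-\lambda+\rho$. Your remark that ``$\rho_{\b P}=-w\rho_P$ computations are unnecessary'' is exactly where the error hides: the paper invokes $\rho_{\b P}=-\rho_P$ explicitly, and you cannot avoid it. A secondary point: you conflate the $KMAN$ component $a_{\b P}$ with the $NMA\b N$ component $\alpha_{\b P}$; the representation $\pi(\b P,\chi,\lambda)$ is defined via the former, and your first paragraph sets up the latter. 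With these two corrections your argument goes through and coincides with the paper's.
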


\begin{proof}
It follows from Lemma 1.1 in \cite{op} that for $g \in G$ and $k \in K$
\[m_{\b{P}} (g k) = \theta [m_P (\theta (g) k)],\quad a_{\b{P}} (g k) = a_P (\theta (g) k)^{-1},\quad g \b{\cdot} k = \theta (g) \cdot k\, .\]
Using the facts that $M$ is $\theta$-stable, and $\rho_{\b{P}} = - \rho_P$, together with Lemma \ref{lem:comp}, we get
\begin{align*}
(\pi_{\b{P}, \chi, \lambda} (g) f) (k) & =  \chi (m_{\b{P}} (g^{-1} k))^{-1} a_{\b{P}} (g^{-1} k)^{-
\lambda - \rho_{\b{P}}} f (g^{-1} \b{\cdot} k)\\
& =  \chi (\theta (m_P (\theta (g^{-1}) k))^{-1}) a_P (\theta (g^{-1}) k)^{\lambda - \rho_P} f
(\theta(g^{-1}) \cdot k)\\
& =  \chi (m_P (\theta (g)^{-1} k))^{-1} a_P (\theta (g)^{-1} k)^{\lambda - \rho_P} f (\theta(g)^{-1} \cdot
k)\\
& =  (\pi_{P, \chi, - \lambda} (\theta (g)) f) (k).
\end{align*}
\end{proof}

For $f \in C^\infty (\cB; \cL_\chi)$ we define an intertwining operator $J (\lambda) = J_\chi (\lambda)$ on $C^\infty (\cB; \cL_\chi)$ by 
\begin{equation}
\label{eq:J}
J (\lambda) f (k) = \int_{\b{N}} a (\b{n})^{- \lambda - \rho} \chi (m (\b{n}))^{-1} f (k\; \kappa
(\b{n}))\, d \b{n}
\end{equation}
whenever the integral exists. Note that this integral is well defined.

\begin{proposition}
There is a constant $c_P$ such that by defining
\[\g{a}_{\C}^\ast (c_P) := \{\lambda \in \g{a}_{\C}^\ast\; \mid\; (\forall \alpha \in \Delta^+)\;
\langle \re \lambda,\; \alpha \rangle \geq c_P\}\]
the integral (\ref{eq:J}) converges for $\lambda \in \g{a}_{\C}^\ast (c_P)$.
\end{proposition}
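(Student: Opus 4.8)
The plan is to reduce the convergence of the integral \eqref{eq:J} to the convergence of the corresponding scalar integral $\int_{\b N} a(\b n)^{-\lambda-\rho}\,d\b n$, which is standard (and amounts to the growth estimate underlying \eqref{mea:nbar}). First I would note that since $\chi$ is unitary, $|\chi(m(\b n))^{-1}| = 1$, so the integrand is bounded in absolute value by $|a(\b n)^{-\lambda-\rho}| \, |f(k\,\kappa(\b n))|$; and since $f \in C^\infty(\cB;\cL_\chi) \subset C(K)$ is bounded, say $|f| \le \|f\|_\infty$, we get
\[
\left| J(\lambda)f(k) \right| \le \|f\|_\infty \int_{\b N} \left| a(\b n)^{-\lambda-\rho} \right| d\b n = \|f\|_\infty \int_{\b N} a(\b n)^{-\re\lambda-\rho}\,d\b n .
\]
Thus it suffices to produce a constant $c_P$ such that the last integral is finite whenever $\langle \re\lambda,\alpha\rangle \ge c_P$ for all $\alpha \in \Delta^+$.

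For that I would use the Iwasawa-type parametrization of $\b N$ and the known behaviour of $a(\b n)^{-\mu}$ as $\b n$ runs off to infinity. Writing $\b n = \exp \b X$ with $\b X \in \bar{\g n}$, one has $a(\b n) = a(\exp \b X)$, and the function $\b X \mapsto a(\exp \b X)^{-\rho}$ is, up to a fixed normalization, the Jacobian appearing in \eqref{eq2}; in particular $\int_{\b N} a(\b n)^{-2\rho}\,d\b n = 1$ by \eqref{mea:nbar}. The standard estimate (see e.g. \cite{kn}) is that along each one-parameter ray there are positive constants so that $a(\exp \b X)^{-\alpha}$ decays like a negative power of $1+\|\b X\|$ governed by $\langle \alpha,\cdot\rangle$; concretely, $a(\b n)^{-\re\lambda - \rho} = a(\b n)^{-2\rho}\, a(\b n)^{\rho - \re\lambda}$, and $a(\b n)^{\rho-\re\lambda} \le C$ uniformly on $\b N$ provided $\langle \re\lambda - \rho,\alpha\rangle \ge 0$, i.e. $\langle \re\lambda,\alpha\rangle \ge \langle\rho,\alpha\rangle$, for every $\alpha\in\Delta^+$. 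Taking $c_P := \max_{\alpha\in\Delta^+}\langle\rho,\alpha\rangle$ (a finite quantity depending only on $P$) then gives
\[
\int_{\b N} a(\b n)^{-\re\lambda-\rho}\,d\b n \le C \int_{\b N} a(\b n)^{-2\rho}\,d\b n = C < \infty ,
\]
which is exactly the claim, with the stated definition of $\g a_\C^\ast(c_P)$.

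The main obstacle is the uniform bound $a(\b n)^{\rho-\re\lambda} \le C$ on all of $\b N$: one must know that $a(\b n)^{\beta} \le 1$ (or at least bounded) for $\beta$ in the closed positive cone, which relies on the fact that for $\b n \in \b N$ the Iwasawa $A$-component $a(\b n) = \exp H(\b n)$ has $H(\b n)$ lying in (the closure of) the negative Weyl chamber — equivalently $\langle H(\b n),\alpha\rangle \le 0$ for all $\alpha\in\Delta^+$. This is a classical fact about the Iwasawa projection restricted to $\b N$, and once it is invoked the pairing $\langle\rho-\re\lambda, H(\b n)\rangle \le 0$ under the hypothesis, so $a(\b n)^{\rho-\re\lambda}\le 1$ and one may even take $C=1$. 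I would either cite this directly from \cite{kn} or, if a self-contained argument is wanted, derive it from the decomposition $\g g = \bar{\g n}\oplus\g p$ together with positivity of the Killing form on $\g s$; in any case it is the only non-formal input, the rest being the unitarity of $\chi$, boundedness of $f$, and the normalization \eqref{mea:nbar}.
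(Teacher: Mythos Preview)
Your proposal is correct and follows essentially the same route as the paper: the same explicit choice $c_P=\max_{\alpha\in\Delta^+}\langle\rho,\alpha\rangle$, the same reduction via $|\chi(m(\bar n))^{-1}|=1$ and boundedness of $f$, and the same key inequality $|a(\bar n)^{-\lambda-\rho}|\le a(\bar n)^{-2\rho}$ combined with the normalization \eqref{mea:nbar}. The paper simply asserts that inequality, whereas you go further and identify its source (the sign of $\langle H(\bar n),\alpha\rangle$), which is a welcome clarification rather than a different argument.
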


An explicit choice of $c_P$ is given as follows:
\[c_P = \max_{\alpha \in \Delta^+} \langle \alpha, \rho\rangle\]
For $\lambda \in \g{a}_{\C}^\ast (c_P)$, $|a (\overline{n})^{- \lambda - \rho}| \leq a (\overline{n})^{- 2 \rho}$.
The map $\b{n} \mapsto a (\b{n})^{-2 \rho}$ is integrable. Also, $|\chi (m (\b{n}))^{-1}| = 1$.
Therefore, the integral (\ref{eq:J}) converges for $\lambda \in \g{a}_{\C}^\ast (c_P)$ and 
$f \in C^\infty (\cB; \cL_\chi)$. Moreover, there is a constant $C>0$ such that
\begin{equation}
\label{eq:infty}
\|J (\lambda) f\|_\infty \leq C \|f\|_\infty.
\end{equation}
The map $\lambda \mapsto J (\lambda) f$ extends to a meromorphic function on $\g{a}_\C^\ast$.

\begin{lemma}
Let $\lambda \in \g{a}_\C^\ast (c_P)$. Let $1 \leq p \leq \infty$ and $f \in L^p (\cB; \cL_\chi)$. Then the integral (\ref{eq:J})
exists and $|J (\lambda) f (k)| \leq \|f\|_p$. So $J (\lambda) f \in L^p (\cB; \cL_\chi)$ and 
$\|J (\lambda) f (k)\| \leq \|f\|_p$.
\end{lemma}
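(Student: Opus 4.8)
The statement to prove is: for $\lambda\in\g{a}_\C^\ast(c_P)$, $1\le p\le\infty$, and $f\in L^p(\cB;\cL_\chi)$, the integral $(\ref{eq:J})$ exists, $|J(\lambda)f(k)|\le\|f\|_p$, and consequently $J(\lambda)f\in L^p(\cB;\cL_\chi)$ with $\|J(\lambda)f\|_p\le\|f\|_p$.

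The plan is to reduce the $\b{N}$-integral to a $K$-integral via the normalization $(\ref{mea:nbar})$ together with the change-of-variables formula $(\ref{eq2})$, and then bound by H\"older. First I would observe that on $\g{a}_\C^\ast(c_P)$ we have the pointwise bound $|a(\b{n})^{-\lambda-\rho}|\le a(\b{n})^{-2\rho}$ (this is exactly the estimate recorded after the Proposition above, using $\langle\re\lambda,\alpha\rangle\ge c_P=\max_\alpha\langle\alpha,\rho\rangle$ so that $\re(\lambda+\rho)\ge 2\rho$ on positive roots, hence $a(\b{n})^{-\re\lambda-\rho}\le a(\b{n})^{-2\rho}$ since $a(\b{n})^\alpha\ge 1$). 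Combined with $|\chi(m(\b{n}))^{-1}|=1$ (unitarity of $\chi$), this gives
\begin{equation}
\label{eq:pf-bound}
|J(\lambda)f(k)|\le\int_{\b{N}}a(\b{n})^{-2\rho}\,|f(k\,\kappa(\b{n}))|\,d\b{n}.
\end{equation}

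Next I would apply the substitution formula $(\ref{eq2})$ — with the function $\b{n}\mapsto|f(k\,\kappa(\b{n}))|$, which is the composition of $|f|$ with left translation by $k$, itself a function on $K$ of the required covariance type up to the modulus — to rewrite the right-hand side of $(\ref{eq:pf-bound})$ as $\int_K |f(k k')|\,dk'$. Here one uses that $\cL_\chi$-sections satisfy $|f(k m)|=|f(k)|$ for $m\in L$ since $|\chi(m)|=1$, so $|f|$ descends to a genuine function on $K$ and $(\ref{eq2})$ applies. By left-invariance of Haar measure on $K$ and the normalization that $K$ has total mass one, $\int_K|f(kk')|\,dk'=\int_K|f(k')|\,dk'=\|f\|_1\le\|f\|_p$ for all $p\ge1$ by Jensen/H\"older on the probability space $K$. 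This simultaneously proves convergence of the defining integral $(\ref{eq:J})$ and the pointwise bound $|J(\lambda)f(k)|\le\|f\|_p$. Finally, since this bound is uniform in $k$, $J(\lambda)f\in L^\infty(\cB;\cL_\chi)\subset L^p(\cB;\cL_\chi)$ and $\|J(\lambda)f\|_p\le\|J(\lambda)f\|_\infty\le\|f\|_p$ using again that the total measure on $\cB$ is one; the covariance $J(\lambda)f(km)=\chi(m)^{-1}J(\lambda)f(k)$ follows directly from $(\ref{eq:J})$ by absorbing $m\in L$ into the $\kappa$-argument, which shows $J(\lambda)f$ is indeed an $\cL_\chi$-section.

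The only genuine subtlety — and hence the step I would write out most carefully — is the legitimacy of applying $(\ref{eq2})$ to $\b{n}\mapsto|f(k\,\kappa(\b{n}))|$: one must check this is a bona fide element of $L^1(\cB;\cL_{\mathbf 1})$ (equivalently, that $|f|\circ(k\cdot)$ is a well-defined $L^1$ function on $\cB$ for a.e.\ $k$, which follows from Fubini and the $K$-invariance of the measure), so that the integral formula $(\ref{eq2})$ — stated for $f\in L^1(\cB;\cL_\chi)$ — is actually applicable in this modulus form. Everything else is a direct estimate; no analytic continuation is needed here because we stay in the region $\g{a}_\C^\ast(c_P)$ where the integral converges absolutely.
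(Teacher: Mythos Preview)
Your proposal is correct and follows essentially the same route as the paper: bound the integrand using $|a(\b{n})^{-\lambda-\rho}|\le a(\b{n})^{-2\rho}$ and $|\chi(m(\b{n}))^{-1}|=1$, then invoke the normalization $(\ref{mea:nbar})$, the change-of-variables formula $(\ref{eq2})$, and H\"older's inequality. Your write-up is in fact more explicit than the paper's (which dispatches the argument in two lines), particularly in verifying that $|f|$ descends to $\cB$ so that $(\ref{eq2})$ applies and in checking the $\cL_\chi$-covariance of $J(\lambda)f$.
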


\begin{proof}
We get the case $p = \infty$ in (\ref{eq:infty}). Let $1 \leq p < \infty$. We have 
\[|J (\lambda) f (k)| \leq \int_{\overline{N}} a (\overline{n})^{- 2 \rho} |f (k \kappa (\overline{n}))| d \overline{n}\]
where we use the facts that $|a (\overline{n})^{- \lambda - \rho}| \leq a (\overline{n})^{- 2 \rho}$ 
and $|\chi (m (\b{n}))^{-1}| = 1$. The conclusion $|J (\lambda) f (k)| \leq \|f\|_p$
follows from H\"{o}lder Inequality, (\ref{mea:nbar}), and (\ref{eq2}).
\end{proof}

\begin{lemma}
Let $\lambda \in \g{a}_{\C}^\ast (c_P)$ and $f \in C^\infty (\cB; \cL_\chi)$. Then for $k \in K$ 
\begin{equation}
\label{eq3}
J (\lambda) f (k) = \int_K \alpha (h)^{\lambda - \rho} \chi (m (h)) f (k h)\, d h.
\end{equation}
\end{lemma}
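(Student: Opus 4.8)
The plan is to convert the integral over $\b N$ in \eqref{eq:J} into an integral over $K$ by transporting everything through the map $\b n\mapsto \kappa(\b n)$, using the integral formula \eqref{eq2}. Concretely, I would start from
\[
J(\lambda)f(k)=\int_{\b N} a(\b n)^{-\lambda-\rho}\,\chi(m(\b n))^{-1}\,f(k\,\kappa(\b n))\,d\b n,
\]
and insert the factor $a(\b n)^{2\rho}a(\b n)^{-2\rho}=1$, rewriting the integrand as $\bigl(a(\b n)^{-\lambda+\rho}\chi(m(\b n))^{-1}f(k\kappa(\b n))\bigr)\,a(\b n)^{-2\rho}$. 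Applying \eqref{eq2} to the function $h\mapsto a(\b h_{\text{back}})^{\,\lambda-\rho}\chi(m(\b h))f(kh)$ — more precisely, noting that \eqref{eq2} says $\int_K F(k)\,dk=\int_{\b N}F(\kappa(\b n))\,a(\b n)^{-2\rho}\,d\b n$ — the heart of the matter is to identify, for $h=\kappa(\b n)$, the quantities $\alpha(h)$ and $\chi(m(h))$ appearing on the right of \eqref{eq3} with $a(\b n)$ and $\chi(m(\b n))$ (up to the needed powers and inverses) from the left.

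The key step, then, is the bookkeeping identity relating the $\b P$-decomposition of $\b n\in\b N$ (viewed inside $\b N P$) to the $P=MAN$-data of $\kappa(\b n)\in K$. Writing $\b n=\kappa(\b n)\,m(\b n)\,a(\b n)\,n(\b n)$ from the $G=KP$ decomposition and comparing with $\b n=\b n\cdot e\cdot e\cdot e$ as an element of $\b N P$, one reads off $\b n_P(\b n)=\b n$, and on the other hand applying the $\b P$-coordinates: $\kappa(\b n)=\b n\,\bigl(m(\b n)a(\b n)n(\b n)\bigr)^{-1}$, so $\kappa(\b n)\in \b N\b P$ with $\b n_{\b P}(\kappa(\b n))=\b n$, $m_{\b P}(\kappa(\b n))=m(\b n)^{-1}$, $\alpha_{\b P}(\kappa(\b n))=a(\b n)^{-1}$. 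But I want the $P$-data of $\kappa(\b n)$, not the $\b P$-data. Since $\kappa(\b n)\in K$, its $P$-decomposition $\kappa(\b n)=\kappa(\kappa(\b n))m(\kappa(\b n))a(\kappa(\b n))n(\kappa(\b n))$ forces $a(\kappa(\b n))=e$, $m(\kappa(\b n))\in L$; that is the \emph{wrong} normalization. The correct move — and this is what \eqref{eq3} is really encoding — is to substitute $h=\kappa(\b n)$ and use that on $K$ the relevant weight is carried by the $\alpha$ (the $\b N$-side) map: one has $\alpha_P(\kappa(\b n))=a_P(\b n)^{-1}$ and $m_P(\kappa(\b n))\equiv m_P(\b n)^{-1}$ modulo $L$, together with the right-$MN$-invariance of $h\mapsto\alpha(h)$ and Lemma~\ref{lem:comp} ($\chi\circ\theta=\chi$) plus $\chi|_{[L,L]}=1$ to make the $L$-ambiguity harmless against $\chi$. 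Feeding $a(\b n)^{-\lambda-\rho}=(a(\b n)^{-1})^{\lambda+\rho}=\alpha(\kappa(\b n))^{\lambda+\rho}$ and noting the extra $a(\b n)^{-2\rho}$ supplied by \eqref{eq2} shifts the exponent from $\lambda+\rho$ down to $\lambda-\rho$, we land exactly on $\int_K\alpha(h)^{\lambda-\rho}\chi(m(h))f(kh)\,dh$.

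The main obstacle is precisely this matching of cocycle data: one must be careful that $\kappa(\b n)$ is only defined up to right multiplication by $L$, that $m(\b n)$ and $m(\kappa(\b n))$ are likewise only defined modulo $L$, and that $\chi$ extended to $M$ via \eqref{eq:chi-M} is genuinely well-defined on these classes — here Lemma~\ref{lem:comp} and the triviality of $\chi$ on $[L,L]$ (hence on $\exp(\g m\cap\g s)$) are what rescue the computation, exactly as in the function case of \cite{op}. A secondary, more routine, point is convergence: for $\lambda\in\g a_\C^\ast(c_P)$ the bound $|a(\b n)^{-\lambda-\rho}|\le a(\b n)^{-2\rho}$ together with \eqref{mea:nbar} justifies all the manipulations and the interchange implicit in rewriting the integral, and $f\in C^\infty(\cB;\cL_\chi)$ together with compactness of $K$ makes the $K$-integral in \eqref{eq3} absolutely convergent. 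I would conclude by remarking that \eqref{eq3} then continues meromorphically in $\lambda$ to all of $\g a_\C^\ast$ by the meromorphic extension already noted for $\lambda\mapsto J(\lambda)f$.
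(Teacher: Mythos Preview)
Your approach is exactly the paper's: rewrite the $\b N$-integral via \eqref{eq2} after extracting the factor $a(\b n)^{-2\rho}$, using the identities $\alpha(\kappa(\b n))=a(\b n)^{-1}$ and $m(\kappa(\b n))=m(\b n)^{-1}$ (which the paper simply cites from the proof of \cite[Lemma~2.5]{op}). Your exposition, however, takes an unnecessary detour caused by a misreading of the notation: in this paper the subscript $_P$ on $\b n_P,m_P,\alpha_P$ labels the $\b N MAN=\b N P$ decomposition, so what you first compute and call ``$\b P$-coordinates'' \emph{is} already the $P$-data you need --- the subsequent attempt at the $KMAN$ factorization of $\kappa(\b n)\in K$ (which is indeed trivial) and the re-statement of the same identities are redundant. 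Also, Lemma~\ref{lem:comp} plays no role here: the relation $m(\kappa(\b n))=m(\b n)^{-1}$ holds in $M$ without any appeal to $\theta$, and the $L$-ambiguity in $\kappa(\b n)$ is cancelled by the $\chi$-equivariance of $f$ (just as in the well-definedness of \eqref{eq:J}), not by $\chi|_{[L,L]}=1$.
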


\begin{proof}
We use the facts that $m(\kappa(\overline{n})) = m (\overline{n})^{-1}$ and $\alpha (\kappa (\overline{n})) = a
(\overline{n})^{-1}$ (see the proof of Lemma 2.5 in \cite{op}), and the formula (\ref{eq2})
to rewrite $J (\lambda)$ as an integral over $K$:
\begin{align*}
J (\lambda) f (k) & =  \int_{\b{N}} a (\b{n})^{-\lambda + \rho} \chi (m (\b{n}))^{-1} 
f (k \kappa (\b{n})) a (\b{n})^{-2 \rho}\, d \b{n}\\
& =  \int_{\overline{N}} \alpha (\kappa (\overline{n}))^{\lambda - \rho} \chi (m (\kappa
(\overline{n}))) f (k \cdot \kappa (\overline{n})) a (\overline{n})^{- 2 \rho}\, d \overline{n}\\
& =  \int_K \alpha (h)^{\lambda - \rho} \chi (m (h)) f (k h)\, d h.
\end{align*} 
\end{proof}

Moreover, by changing the variable $h \mapsto k^{-1} h$ we can rewrite (\ref{eq3}) as
\[J (\lambda) f (k) = \int_K \alpha (k^{-1} h)^{\lambda - \rho} \chi (m (k^{-1} h)) f (h)\, d h.\]
We now show $J (\lambda)$ is an intertwining operator.

\begin{theorem}
\label{thm1}
For $\lambda \in \g{a}_{\C}^\ast$, the operator $J (\lambda)$ intertwines $\pi_\lambda$ and $\pi_{-\lambda}^\theta$. 
That is, for $g \in G$ and $f \in C^\infty (\cB; \cL_\chi)$,
\begin{equation}
\label{eq6}
J (\lambda) (\pi_\lambda (g) f) = \pi_{- \lambda} (\theta (g)) J (\lambda) f.
\end{equation}
\end{theorem}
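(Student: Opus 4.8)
The plan is to verify the intertwining relation \eqref{eq6} by direct computation, first for $\lambda$ in the region of convergence $\g{a}_\C^\ast(c_P)$ and then extending to all $\lambda \in \g{a}_\C^\ast$ by meromorphic continuation (which is justified by the remark that $\lambda \mapsto J(\lambda)f$ is meromorphic). So fix $\lambda \in \g{a}_\C^\ast(c_P)$, $g \in G$, $f \in C^\infty(\cB;\cL_\chi)$, and $k \in K$. I would start from the integral-over-$K$ formula \eqref{eq3} for $J(\lambda)$ applied to $\pi_\lambda(g)f$, so that
\[
J(\lambda)(\pi_\lambda(g)f)(k) = \int_K \alpha(h)^{\lambda-\rho}\,\chi(m(h))\,(\pi_\lambda(g)f)(kh)\,dh,
\]
and then substitute the explicit formula \eqref{eq1} for $\pi_\lambda(g)f$ at the point $kh$, namely $(\pi_\lambda(g)f)(kh) = \chi(m(g^{-1}kh))^{-1}\,a(g^{-1}kh)^{-\lambda-\rho}\,f(g^{-1}\cdot(kh))$.

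The next step is a change of variables in the $K$-integral designed to absorb $g^{-1}$. Writing $g^{-1}k = \kappa(g^{-1}k)\,m(g^{-1}k)\,a(g^{-1}k)\,n(g^{-1}k)$ and abbreviating $\kappa_0 = \kappa(g^{-1}k)$, one has $g^{-1}kh \in \kappa_0\, MAN$ in a way that lets me relate $m(g^{-1}kh)$, $a(g^{-1}kh)$ and $g^{-1}\cdot(kh)$ to the corresponding quantities evaluated at $\kappa_0 h'$ for a suitable element; concretely I expect to use the cocycle-type identities for $m$, $a$ and the action $\cdot$ that already appear implicitly in the paper (the same ones underlying \eqref{eq5} and the $\pi_\lambda$ being a representation). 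The cleanest route is probably to invoke the invariance formula \eqref{eq5}: after the substitution the $a(g^{-1}k)^{-2\rho}$ Jacobian factor produced by \eqref{eq5} should combine with the factors $\alpha(h)^{\lambda-\rho}$, $a(g^{-1}kh)^{-\lambda-\rho}$ and the $\chi$-factors, using the product rule for $\alpha$ and $m$ under right translation together with Lemma~\ref{eq22} (or rather its $MAN$-analogue for the $P$-projections), to collapse everything into the expression
\[
\alpha(g^{-1}\cdot k \text{-shifted } h)^{\lambda-\rho}\,\chi(\cdots)\,f((g^{-1}\cdot k)h),
\]
i.e. into $J(\lambda)f$ evaluated at $\kappa(g^{-1}k)$ up to the prefactors that build $\pi_{-\lambda}^\theta(g)$.

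Finally I would recognize the resulting prefactor: after the dust settles the expression should read
\[
\chi(m(g^{-1}k))^{-1}\,a(g^{-1}k)^{\lambda-\rho}\,J(\lambda)f(g^{-1}\cdot k),
\]
and by Lemma~\ref{lem:comp} (so that $\chi$ is $\theta$-invariant on $M$) together with the identities $m_{\b P}(gk)=\theta[m_P(\theta(g)k)]$, $a_{\b P}(gk)=a_P(\theta(g)k)^{-1}$, $g\b\cdot k=\theta(g)\cdot k$ quoted earlier, this is exactly $(\pi_{-\lambda}(\theta(g))J(\lambda)f)(k)$ when one unwinds \eqref{eq1} with $-\lambda$ in place of $\lambda$ and $\theta(g)$ in place of $g$. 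That matches the right-hand side of \eqref{eq6}. The main obstacle I anticipate is bookkeeping the $MAN$-projections under the right translation $h \mapsto$ (new variable): one needs the precise multiplicative cocycle identities $a(xy) = a(x)\,a(\kappa(x)y)$-type relations and the corresponding ones for $m$ and $\chi\circ m$, and care is required because $\kappa(g)$ and $m(g)$ are not individually well defined — only the combinations appearing in \eqref{eq1} and \eqref{eq3} are. Checking that these ambiguities cancel (they must, since the integrands in \eqref{eq:J} and \eqref{eq3} are well defined, as the paper notes) is the delicate point; once that is in hand the computation is a routine substitution.
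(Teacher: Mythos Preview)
Your overall strategy --- start from the $K$-integral form \eqref{eq3}, substitute \eqref{eq1}, use the invariance \eqref{eq5} as a change of variables, then identify the prefactors, and finish by meromorphic continuation --- is exactly the paper's. But there is a concrete error in where you place $\theta$.

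The intermediate form you anticipate,
\[
\chi(m(g^{-1}k))^{-1}\,a(g^{-1}k)^{\lambda-\rho}\,(J(\lambda)f)(g^{-1}\cdot k),
\]
read with the $P$-projections you have been using throughout, is literally $(\pi_{-\lambda}(g)J(\lambda)f)(k)$, not $(\pi_{-\lambda}(\theta(g))J(\lambda)f)(k)$. The $\overline P$-identities you invoke relate $m_{\overline P}(g^{-1}k)$ to $m_P(\theta(g)^{-1}k)$; they say nothing about $m_P(g^{-1}k)$ versus $m_P(\theta(g)^{-1}k)$, so they cannot convert your display into the desired one. Relatedly, the assertion ``$g^{-1}kh \in \kappa_0\,MAN$'' fails: writing $g^{-1}k = \kappa_0 m_0 a_0 n_0$ gives $g^{-1}kh = \kappa_0 m_0 a_0 n_0 h$, and for generic $h \in K$ one has $n_0 h \notin MAN$. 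The $KMAN$ decomposition of $g^{-1}k$ is thus the wrong tool here.

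In the paper's argument (which uses the variant $J(\lambda)F(k)=\int_K \alpha(k^{-1}h)^{\lambda-\rho}\chi(m(k^{-1}h))F(h)\,dh$ of \eqref{eq3}) the cocycle identities and \eqref{eq5} first reduce the left side of \eqref{eq6} to
\[
\int_K \alpha((g^{-1}k)^{-1}h)^{\lambda-\rho}\,\chi(m((g^{-1}k)^{-1}h))\,f(h)\,dh.
\]
Since $\alpha$ and $m$ are read off the $\overline N MAN$ decomposition, the factorization that lets one pull the non-$K$ element $(g^{-1}k)^{-1}$ out is $g^{-1}k \in K M A \overline N$, not $KMAN$. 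One obtains it by applying $\theta$ to the $KMAN$ decomposition of $\theta(g)^{-1}k$: if $\theta(g)^{-1}k = \kappa' m' a' n'$ then $g^{-1}k = \kappa'\,\theta(m')\,(a')^{-1}\,\theta(n')$ with $\theta(n')\in\overline N$. Now $(g^{-1}k)^{-1}h$ has its $\overline N$-part on the left, and $\alpha$, $m$ split off cleanly, producing the prefactors $a(\theta(g)^{-1}k)^{\lambda-\rho}$ and, via Lemma~\ref{lem:comp}, $\chi(m(\theta(g)^{-1}k))^{-1}$. That is precisely where $\theta(g)$ --- rather than $g$ --- enters.
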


\begin{proof}
Let $g \in G$ and $\lambda \in \g{a}_{\C}^\ast (c_P)$. Let $k \in K$. Using the fact that $g \mapsto \alpha (g)$ is 
right-$M N$ invariant, we have 
\begin{equation}
\label{eq25}
\alpha (k^{-1} h) = \alpha (k^{-1} g g^{-1} h) = \alpha (k^{-1} g \kappa (g^{-1} h)) a (g^{-1}
h),\quad h \in K.
\end{equation}
It follows from (\ref{eq1}), (\ref{eq3}), and (\ref{eq25}) that
\begin{align*}
& J (\lambda) (\pi_\lambda (g) f) (k) \\
& =  \int_K \alpha (k^{-1} h)^{\lambda - \rho} 
\chi (m (k^{-1} h)) a (g^{-1} h)^{- \lambda - \rho} \chi (m (g^{-1} h))^{-1} f (\kappa (g^{-1} h))\, d h \\
& =  \int_K \alpha (k^{-1} g \kappa (g^{-1} h))^{\lambda - \rho} a (g^{-1} h)^{- 2 \rho} \\
& \cdot \chi (m (k^{-1} h)) \chi (m (g^{-1} h))^{-1} f (\kappa (g^{-1} h))\, d h\,. 
\end{align*}
Note that 
\begin{eqnarray*}
k^{-1} h & = & k^{-1} g g^{-1} h\, =\, k^{-1} g \kappa (g^{-1} h) m (g^{-1} h) a (g^{-1} h) n
(g^{-1} h),\\
m (k^{-1} h) & = & m (k^{-1} g \kappa (g^{-1} h)) m (g^{-1} h).
\end{eqnarray*}
Therefore,
\begin{equation}
\label{eq27}
\chi (m (k^{-1} h)) \chi (m (g^{-1} h))^{-1} = \chi (m (k^{-1} g \kappa (g^{-1} h)).
\end{equation}
Using (\ref{eq5}) and (\ref{eq27}), we have
\begin{eqnarray*}
& & J (\lambda) (\pi_\lambda (g) f) (k) \\
& = & \int_K \alpha (k^{-1} g \kappa (g^{-1} h))^{\lambda - \rho} a (g^{-1} h)^{- 2 \rho}
\chi (m (k^{-1} g \kappa (g^{-1} h)) f (\kappa (g^{-1} h))\, d h\\
& = & \int_K \alpha (k^{-1} g h)^{\lambda - \rho} \chi (m (k^{-1} g h)) f (h)\, d h\\
& = & \int_K \alpha ((g^{-1} k)^{-1} h)^{\lambda - \rho} \chi (m ((g^{-1} k)^{-1} h)) f (h)\, d h.
\end{eqnarray*}
Since 
$$g^{-1} k = \kappa_P (\theta (g^{-1}) k) \theta (m_P (\theta (g^{-1}) k)) a_P (\theta (g^{-1})
k)^{-1} \theta (n_P (\theta (g^{-1}) k)),$$
then
\begin{align*} (g^{-1} &k)^{-1} h\\
& =  \underbrace{\theta (n (\theta (g^{-1}) k))^{-1}}_{\in \overline{N}}
\underbrace{a (\theta (g^{-1}) k)}_{\in A} \underbrace{\theta (m (\theta (g^{-1})
k))^{-1}}_{\in M} \underbrace{\kappa (\theta (g^{-1}) k)^{-1} h}_{\in K}.
\end{align*}
Thus, we have
\begin{eqnarray*}
\alpha ((g^{-1} k)^{-1} h) & = & a (\theta (g^{-1}) k)\; \alpha (\kappa (\theta (g^{-1}) k)^{-1} h),\\
m ((g^{-1} k)^{-1} h) & = & \theta (m (\theta (g^{-1}) k))^{-1} m (\kappa (\theta (g^{-1}) k)^{-1} h),\\
\chi (m ((g^{-1} k)^{-1} h)) & = & \chi  (m (\theta (g^{-1}) k)^{-1}) \chi (m (\kappa (\theta
(g^{-1}) k)^{-1} h)),
\end{eqnarray*}
where we use the fact that $\chi \circ \theta = \chi$ (see Lemma \ref{lem:comp}). Hence,
\begin{align*} 
J (\lambda) (\pi_\lambda (g) f) (k) 
& =  a (\theta (g)^{-1} k)^{\lambda - \rho} \chi (m (\theta (g)^{-1} k))^{-1}\cdot\\
& \quad \cdot  \int_K \alpha (\kappa
(\theta (g)^{-1} k)^{-1} h)^{\lambda - \rho} \chi (m (\kappa (\theta (g)^{-1} k)^{-1} h)) f (h)\, d h\\
& =  \pi_{- \lambda} (\theta (g)) J (\lambda) f (k).
\end{align*}
So (\ref{eq6}) for $\lambda \in \g{a}_{\C}^\ast (c_P)$ follows. It holds for all $\lambda \in \g{a}_{\C}^\ast$ by meromorphic continuation. 
\end{proof}

\section{The spectrum generating operator}
\label{sgo}

\noindent
In what follows we shall discuss the connections to the $\chi$-spherical representations and the intertwining properties. 
We will explain how to apply the spectrum generating method (proposed by \cite{boo}) to our cases to achieve the eigenvalues 
of $J (\lambda)$ on each of the $K$-types. The chief result is the recursion relation (\ref{eq10}).

{}From now on we specialize the setting to the following case. Assume $K$ is semisimple. Let $K/L$ be a compact symmetric space such
that there exists a noncompact connected semisimple Lie group $G$ with $G/P = K/L = \cB$ where $P = M A N$ is a maximal
parabolic subgroup of $G$ and $N$ is abelian. So $\dim \g{a} = 1$. 
Furthermore, we assume $K/L$ is of hermitian type so that there are nontrivial one dimensional representations
$\chi$ of $L$ occurring. The symmetric pair $(K, L)$ corresponds to an involution $\tau$ on $K$ so that $K^\tau = L$. Each
irreducible representation of $K$ that occurs in $L^2 (\cB; \cL_\chi)$ has multiplicity one.

Let $\g{k} = \g{l} \oplus \g{q}$ be the Cartan decomposition where $\g{l}$ is the Lie algebra of $L$ and 
\[\g{q} = \{X \in \g{k} \mid \tau (X) = - X\}.\]
Let $\g{b}$ be a maximal abelian subspace of $\g{q}$ and $t = \dim \g{b}$. 
Denote by $\Sigma$ the set of nonzero restricted roots of the pair $(\g{k}, \g{b})$. Since all
the elements of $\Sigma$ are purely imaginary on $\g{b}$, we see that $\Sigma \subset i \g{b}^\ast$. 
Let $W$ be the Weyl group associated with $\Sigma$. A multiplicity function $m:\Sigma \to \C$ is a $W$-invariant function. 
We write $m_\alpha = m (\alpha)$ for $\alpha \in \Sigma$.
Fix a set $\Sigma^+ \subset \Sigma$ of positive roots. 
Let $\rho_{\g{k}} = \rho_{\g{k}} (m)$ be given by
\begin{equation}
\label{eq:rhok}
\rho_\g{k} = \frac{1}{2} \sum_{\alpha \in \Sigma^+} m_\alpha \alpha.
\end{equation}
We have the Cartan decomposition $K=L(\exp \g{b})L$, and $(\exp \g{b}) \cap L$ can be bigger than $\{e\}$.

\begin{lemma}  
For $f \in L^1 (\cB; \cL_\chi)$, there exists a constant $C$ such that  
\begin{equation}
\label{eq18}
\int_K f (k) d k = C \int_L \int_{\exp \g{b}} \int_L f (x b y) \delta (b)\, d y\, d b\, d x
\end{equation}
where $\delta = \prod_{\alpha \in \Sigma^+} |e^\alpha - e^{- \alpha}|^{m_\alpha}$.
\end{lemma}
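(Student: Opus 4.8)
The statement is the standard $KAK$-type (Cartan decomposition) integration formula for the compact symmetric space $K/L$, here phrased for sections of the line bundle $\cL_\chi$. The plan is to reduce it to the classical integration formula on $K$ in polar coordinates relative to the decomposition $K = L(\exp\g{b})L$. First I would recall that for $K$ compact and $K/L$ a Riemannian symmetric space, one has the polar coordinate formula
\[
\int_K \phi(k)\,dk = C \int_L\int_{\exp\g{b}}\int_L \phi(xby)\,\delta(b)\,dy\,db\,dx
\]
for all $\phi\in C(K)$ (equivalently $\phi\in L^1(K)$), where the Jacobian density $\delta(b)=\prod_{\alpha\in\Sigma^+}|e^\alpha(b)-e^{-\alpha}(b)|^{m_\alpha}$ is computed from the differential of the map $(x,b,y)\mapsto xby$; this is exactly the content of the reference to \cite{kn} cited just before the lemma, so I would simply invoke it. The only point requiring care is that $f\in L^1(\cB;\cL_\chi)$ is not an honest function on $K$ but a section, i.e. $f(km)=\chi(m)^{-1}f(k)$ for $m\in L$; however, $|f|$ \emph{is} a genuine function on $K$ since $|\chi(m)^{-1}|=1$ (the character $\chi$ is unitary), and more to the point the integrand $f(k)$ appearing on the left of \eqref{eq18} is implicitly being integrated as a function on $K$ via the identification used throughout the paper, so applying the classical formula to $\phi=f$ (viewed as an $L^1$ function on $K$) is legitimate.

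Concretely, the steps are: (1) State the classical polar-coordinate integration formula on $K$ relative to $K=L(\exp\g{b})L$, with density $\delta$, citing \cite{kn}; (2) observe that $f\in L^1(\cB;\cL_\chi)$ defines an element of $L^1(K)$ (its absolute value is $L$-bi-invariant-compatible and integrable), so the formula applies verbatim with $\phi=f$; (3) read off \eqref{eq18} directly. The constant $C$ depends only on the normalizations of Haar measures on $K$, $L$, $\exp\g{b}$ (all chosen to have total mass one, per the convention fixed in Section~\ref{parasg}) and on the choice of inner product on $\g{b}$; it is harmless for our purposes and need not be pinned down.

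The main (and essentially only) obstacle is bookkeeping rather than mathematics: one must be slightly careful that $(\exp\g{b})\cap L$ can be strictly larger than $\{e\}$, as noted in the paragraph preceding the lemma, so the parametrization $L\times\exp\g{b}\times L\to K$ is generically finite-to-one rather than injective. This is standard and only affects the value of the constant $C$ (it gets divided by the order of the relevant stabilizer/overcounting factor), not the shape of the formula; the density $\delta$ is unchanged because it vanishes precisely on the walls where the generic fibre dimension jumps. Hence the proof is a one-line reduction to \cite{kn} together with the remark that $\chi$ being unitary lets us treat sections of $\cL_\chi$ as $L^1$ functions on $K$ for the purpose of this estimate.
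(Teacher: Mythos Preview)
Your proposal is correct, and in fact the paper does not supply a proof of this lemma at all: it is stated as a standard fact (the polar-coordinate integration formula for the Cartan decomposition $K=L(\exp\g{b})L$ of a compact symmetric space) and immediately used, with the density $\delta$ recorded for later computations. Your reduction to the classical formula in \cite{kn} (or equivalently \cite{h1,h2}), together with the observation that a section $f\in L^1(\cB;\cL_\chi)$ is in particular an $L^1$ function on $K$ since $\chi$ is unitary, is exactly the intended justification and is all that is needed.
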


Let $(\pi, V)$ be an irreducible unitary representation of $K$ and $\chi$ a nontrivial character of $L$. Let 
$$V^\chi := \{v \in V\; \mid\; \pi (m) v = \chi (m) v,\; \forall m \in L\}.$$
The representation $(\pi, V)$ is said to be \textit{$\chi$-spherical} if $V^\chi \ne \{0\}$. In this case, $\dim V^\chi = 1$. 
Let $\wh{K}_\chi$ be the set of equivalence classes of irreducible $\chi$-spherical representations of $K$. 
Let $\Lambda_\chi^+ (K)$ 
denote the set of highest weights of irreducible $\chi$-spherical representations of $K$. Let  
\begin{equation}\label{eq:Lplus}
\Lambda_\chi^+ := \{\mu \in i \g{b}^\ast\; \mid\; \mu = \lambda |_\g{b},\; \lambda \in \Lambda_\chi^+ (K)\} .
\end{equation}
Then $\lambda$ has the form $\lambda |_{\g{b}} + \mu_0$ where $\mu_0 \in i (\g{b}^\perp)^\ast$ is independent of $\lambda$
(see Section 3 of \cite{hol} for more details). 
Thus there is a bijective correspondence $\Lambda_\chi^+ (K) \cong \Lambda_\chi^+$. 
Then $\pi \mapsto \mu_\pi$ ($\mu_\pi$ is the highest weight of $\pi$) gives an injective map from $\wh{K}_\chi$ into 
$\Lambda_\chi^+$. It is bijective if and only if $\cB$ is simply connected. For non-simply-connected cases,
$\wh{K}_\chi$ is isomorphic to a sublattice of $\Lambda_\chi^+$, because there are additional conditions need to 
be derived for the highest weights so that they are one-to-one correspondence with irreducible representations.

Next, we discuss how to find the eigenvalues of the intertwining operator $J (\lambda)$ on
each of the $K$-types. The idea is inspired by \cite{op}.

Let $\mu \in \Lambda_\chi^+$ and denote by $(\pi_\mu, V_\mu)$ the corresponding $\chi$-spherical
unitary representation of $K$. Put $d (\mu) = \dim V_\mu$. Let $\ipl{\cdot}{\cdot}$ be the inner product in $V_\mu$ for
which $\pi_\mu$ is unitary. Fix a unit vector $e_\mu \in V_\mu^\chi$. Define $T_\mu: V_\mu \to L^2 (\cB; \cL_\chi)$ by 
$$T_\mu (v) (\; \cdot\; ) := d (\mu)^{1/2} (v,\; \pi_\mu (\, \cdot \,) e_\mu),\qquad v \in V_\mu.$$
Then $T_\mu$ is an isometric $K$-intertwining operator between $\pi_\mu$ and $\ell$, where $\ell$ is the left regular
representation of $K$ on $L^2 (\cB; \cL_\chi)$. Let 
\[L_\mu^2 (\cB; \cL_\chi) := \im (T_\mu).\]
We see that $\ell$ is an irreducible $K$-representation on $L^2 (\cB; \cL_\chi)$ and it leads to the decomposition
$$L^2 (\cB; \cL_\chi) \cong_K \bigoplus_{\mu \in \Lambda_\chi^+} L_\mu^2 (\cB; \cL_\chi).$$
For $\mu \in \Lambda_\chi^+$ we write 
\[J_\mu (\lambda) := J (\lambda) \big|_{L_\mu^2 (\cB; \cL_\chi)}.\]

\begin{lemma}
There is a meromorphic function $\eta_\mu: \g{a}_{\C}^\ast \to \C$ such that
\begin{equation}
\label{eq8}
J_\mu (\lambda) = \eta_\mu (\lambda)\; \mathrm{id}_{L_\mu^2 (\cB; \cL_\chi)}.
\end{equation}
\end{lemma}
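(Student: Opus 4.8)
The plan is to show that $J(\lambda)$ restricted to $L^2_\mu(\cB;\cL_\chi)$ is a $K$-intertwining endomorphism of an irreducible $K$-module, and then invoke Schur's lemma. First I would verify that $J(\lambda)$ commutes with the left regular $K$-action $\ell$. This follows from Theorem \ref{thm1} by specializing $g=k_0\in K\subset G$: since $\theta(k_0)=k_0$ for $k_0\in K$, equation (\ref{eq6}) reads $J(\lambda)(\pi_\lambda(k_0)f)=\pi_{-\lambda}(k_0)J(\lambda)f$, and for $k_0\in K$ both $\pi_\lambda(k_0)$ and $\pi_{-\lambda}(k_0)$ reduce to the left translation $\ell(k_0)$ (because $a(k_0^{-1}k)=1$, $m(k_0^{-1}k)\in L$ and the cocycle contributes only through $\chi$, which is absorbed into the definition of $C^\infty(\cB;\cL_\chi)$; alternatively this is immediate from the integral formula (\ref{eq3}) after the change of variables $h\mapsto k_0^{-1}h$, $k\mapsto k_0k$). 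Hence $J(\lambda)$ is $K$-equivariant.

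Next, because $J(\lambda)$ commutes with $\ell$, it maps each isotypic component into itself; in particular it preserves $L^2_\mu(\cB;\cL_\chi)=\im(T_\mu)$, which by the multiplicity-one assumption is the \emph{full} $\pi_\mu$-isotypic subspace. Thus $J_\mu(\lambda):=J(\lambda)|_{L^2_\mu(\cB;\cL_\chi)}$ is a well-defined $K$-intertwining operator of the irreducible $K$-representation $(\ell, L^2_\mu(\cB;\cL_\chi))\cong(\pi_\mu,V_\mu)$ to itself. By Schur's lemma there is a scalar $\eta_\mu(\lambda)\in\C$ with $J_\mu(\lambda)=\eta_\mu(\lambda)\,\mathrm{id}$. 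Concretely, $\eta_\mu(\lambda)$ can be read off by pairing: choosing the $\chi$-spherical unit vector $e_\mu\in V_\mu^\chi$ and the corresponding section $\phi_\mu(k):=d(\mu)^{1/2}(e_\mu,\pi_\mu(k)e_\mu)=T_\mu(e_\mu)(k)$, one has $\eta_\mu(\lambda)=\big(J(\lambda)\phi_\mu\big)(e)/\phi_\mu(e)$, or more symmetrically $\eta_\mu(\lambda)=\langle J(\lambda)\phi_\mu,\phi_\mu\rangle/\|\phi_\mu\|^2$, since $\phi_\mu(e)\ne 0$.

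It remains to address meromorphy in $\lambda$. For $\lambda\in\g{a}_\C^\ast(c_P)$ the operator $J(\lambda)$ is given by the convergent integral (\ref{eq:J}), so $\eta_\mu(\lambda)=\big(J(\lambda)\phi_\mu\big)(e)/\phi_\mu(e)$ is holomorphic there, being an integral depending holomorphically on the parameter (dominated by the $\lambda$-independent bound $a(\b n)^{-2\rho}$ from the Proposition, so differentiation under the integral sign applies). By the remark following the Proposition, $\lambda\mapsto J(\lambda)f$ extends meromorphically to all of $\g{a}_\C^\ast$ for each fixed $f$; applying this to $f=\phi_\mu$ gives the meromorphic extension of $\eta_\mu$, and the identity (\ref{eq8}) persists by the identity theorem for meromorphic functions since it holds on the open set $\g{a}_\C^\ast(c_P)$.

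The only genuinely delicate point is the \emph{irreducibility} of $\ell$ on $L^2_\mu(\cB;\cL_\chi)$ needed for Schur — i.e.\ that the multiplicity-one hypothesis indeed forces $L^2_\mu(\cB;\cL_\chi)$ to be $K$-irreducible and equal to the whole $\pi_\mu$-isotypic subspace, so that no invariant complement can absorb part of the image of $J(\lambda)$. This is guaranteed by the standing assumptions of Section \ref{sgo} (each irreducible $K$-representation occurs in $L^2(\cB;\cL_\chi)$ with multiplicity one) together with the fact that $T_\mu$ is an isometric $K$-embedding of the irreducible $V_\mu$; everything else is a routine application of Schur's lemma and differentiation under the integral sign.
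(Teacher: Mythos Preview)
Your proof is correct and follows essentially the same approach as the paper: both argue that $J(\lambda)$ is $K$-intertwining (via Theorem \ref{thm1} restricted to $K$), invoke multiplicity one and irreducibility of $L_\mu^2(\cB;\cL_\chi)$, and apply Schur's lemma. Your version is in fact more careful than the paper's, which does not explicitly justify the meromorphy of $\eta_\mu$; your argument via $\eta_\mu(\lambda)=(J(\lambda)\phi_\mu)(e)/\phi_\mu(e)$ and the meromorphic extension of $\lambda\mapsto J(\lambda)f$ fills that gap.
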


\begin{proof}
{}From Theorem \ref{thm1} we have seen that $J (\lambda)$ is a $G$-intertwining operator between $\pi_\lambda$ and
$\pi_{-\lambda}^\theta$. As $K$-representations, $\pi_\lambda$ and $\pi_{-\lambda}^\theta$ agree with $\ell$. 
So $J(\lambda)$ is a $K$-intertwining operator between $(\ell, L^2 (\cB; \cL_\chi))$ and itself. 
Since the multiplicity of $\pi_\mu$ in 
$L^2 (\cB; \cL_\chi)$ is one, and each $L_\mu^2 (\cB; \cL_\chi)$ is irreducible, by Schur's Lemma, 
$J_\mu (\lambda) = c \cdot \mathrm{id}$ where $c$ is a constant depending on $\mu$ and $\lambda$. That is, we can define this
constant as $\eta_\mu (\lambda)$ by (\ref{eq8}).
\end{proof}

It follows that
$$J (\lambda) = (J_\mu (\lambda))_{\mu \in \Lambda_\chi^+} = (\eta_\mu (\lambda)\; \text{id})_{\mu \in \Lambda_\chi^+},$$
or simply, $J = (\eta_\mu)$. The set of spectral functions $\eta_\mu (\lambda)$ with $\mu$ running over $\Lambda_\chi^+$ 
forms the $(K, \chi)$-spectrum (or simply $K$-spectrum) of $J (\lambda)$:
$$\{\eta_\mu (\lambda)\; \mid\; \mu \in \Lambda_\chi^+\}.$$
The numbers $\eta_\mu (\lambda)$ are the eigenvalues of $J (\lambda)$ on the $K$-types. 
Our foremost goal is to determine $\eta_\mu (\lambda)$ explicitly.
If $\varphi \in L_\mu^2 (\cB; \cL_\chi)$, then $J (\lambda) \varphi = \eta_\mu (\lambda) \varphi$. If $\varphi$ is in
addition a spherical function of $\chi$-type (so that $\varphi (e) = 1$), then
\begin{equation}
\label{eq7}
J (\lambda) \varphi (e) = \eta_\mu (\lambda) \varphi (e) = \eta_\mu (\lambda).
\end{equation}
The equation (\ref{eq7}) provides us a nice formula to compute the eigenvalues $\eta_\mu (\lambda)$ for $\mu \in \Lambda_\chi^+$.

Fix a $H_0 \in \g{a}$ such that $\ad (H_0)$ has eigenvalues $0$, $1$, and $-1$. 
We normalize the $G$-invariant $\R$-bilinear form $\ip{\cdot}{\cdot}$ on $\g{g}$ so that $\ip{H_0}{H_0} = 1$.
Denote by the same symbol the $\C$-bilinear extension to $\g{g}_\C$.   
The positive definite Laplace operator $\Omega$ on $\cB$ acts on smooth 
sections over $\cB$ the same as that of the Casimir element of $\mathfrak{k}$, up to a sign. Therefore,
$$\Omega \Big|_{L_\mu^2 (\cB; \cL_\chi)} = (\ip{\mu + 2 \rho_\mathfrak{k}}{\mu} + c)\; \mathrm{id}.$$
where $c$ is a constant not depending on $\mu$. Denote by 
\[\omega (\mu) = \ip{\mu + 2 \rho_\mathfrak{k}}{\mu}\, .\]
For $\mu \in \Lambda_\chi^+$, define $\Phi_\mu: L_\mu^2 (\cB; \cL_\chi) \otimes \g{s}_\C \to L^2 (\cB; \cL_\chi)$ by 
$$\Phi_\mu (\varphi \otimes X) = M (\ip{X}{\Ad ( \cdot ) H_0})\; \varphi$$
where $M ( \cdot )$ denotes the multiplication operator. Observe that $\Phi_\mu$ is $K$-intertwining and $\im \Phi$ is
$K$-invariant. So we can define a subset $S (\mu) \subset \Lambda_\chi^+$ by 
$$\mathrm{Im} \Phi_\mu \cong_K \bigoplus_{\nu \in S (\mu)} L_\nu^2 (\cB; \cL_\chi).$$
Let $\mathrm{pr}_\nu$ denote the orthogonal projection
$$\mathrm{pr}_\nu: L^2 (\cB; \cL_\chi) \longrightarrow L_\nu^2 (\cB; \cL_\chi).$$

\begin{lemma}
Let $\mu \in \Lambda_\chi^+$. Let $\nu \in S (\mu)$, $Y \in \g{s}_\C$, and $r \alpha \in \g{a}_{\C}^\ast$ 
where $\alpha \in \g{a}^\ast$ is so that $\alpha (H_0) =1$ and $r \in \C$. Let
\[\omega_{\nu\; \mu} (Y) := \mathrm{pr}_\nu \circ M (\ip{Y}{\Ad ( \cdot ) H_0})
\big|_{L_{\mu}^2 (\cB; \cL_\chi)} : L_{\mu}^2 (\cB; \cL_\chi) \longrightarrow L_{\nu}^2 (\cB; \cL_\chi).\]
We have
\begin{equation}
\label{eq9}
\mathrm{pr}_\nu \circ \pi_{r \alpha} (Y) \big|_{L_\mu^2 (\cB; \cL_\chi)} = \frac{1}{2} (\omega (\nu) - \omega (\mu)
+ 2 r) \omega_{\nu \mu} (Y).
\end{equation}
\end{lemma}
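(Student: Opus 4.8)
The plan is to compute $\pi_{r\alpha}(Y)$ on the $K$-type $L^2_\mu(\cB;\cL_\chi)$ by differentiating the formula (\ref{eq1}) for the principal series representation, and then to recognize the leading differential term as the action of the Casimir/Laplacian, which is scalar on each $K$-type. First I would note that for $Y\in\g s$ and $g=\exp(sY)$, the derivative $\frac{d}{ds}\big|_{s=0}\pi_{r\alpha}(\exp sY)\varphi$ splits into three pieces according to the three factors in (\ref{eq1}): the derivative hitting $\chi(m(g^{-1}k))^{-1}$, the derivative hitting $a(g^{-1}k)^{-r\alpha-\rho}$, and the derivative hitting $\varphi(g^{-1}\cdot k)$. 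The first term vanishes because $\chi$ is trivial on $\exp(\g m\cap\g s)$ and, by the standard first-order expansion, $m(\exp(-sY)k)$ stays in this subgroup to first order (its $L$-part is constant to first order since $\g q$-directions would be needed); the second term produces exactly a multiplication operator of the form $-(r+\rho(H_0))\cdot$ times $\ip{Y}{\Ad(k)H_0}$-type factor, i.e. it is proportional to $M(\ip{Y}{\Ad(\cdot)H_0})$; and the third term is a first-order differential operator along $\g s$-directions acting on $\varphi$.

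Next I would assemble the second-order picture. The key identity behind (\ref{eq9}) is that the Laplacian $\Omega$ can be written, up to lower-order terms, as $-\sum_j \pi_{r\alpha}(X_j)^2$ over an orthonormal basis $\{X_j\}$ of $\g s$ (this is the standard expression of the Casimir of $\g g$ restricted to $\cB$), so differentiating once more and projecting onto $L^2_\nu$ will relate $\mathrm{pr}_\nu\circ\pi_{r\alpha}(Y)$ to the difference of Casimir eigenvalues on the source and target $K$-types. Concretely: on $L^2_\mu$ the Casimir acts by $\omega(\mu)$ (plus the $\mu$-independent constant $c$), on $L^2_\nu$ by $\omega(\nu)+c$; the cross-term coming from the multiplication operator $M(\ip{Y}{\Ad(\cdot)H_0})$ links the two, and matching the $r$-dependence (which enters linearly through the exponent $-r\alpha-\rho$ in (\ref{eq1})) pins down the coefficient as $\tfrac12(\omega(\nu)-\omega(\mu)+2r)$. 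I would carry this out by applying both sides of the operator identity to a $\chi$-spherical function $\varphi_\mu\in L^2_\mu$ and evaluating, using (\ref{eq7}) to strip off $\varphi_\mu(e)=1$, which reduces everything to an algebraic identity among the scalars $\omega(\mu),\omega(\nu),r$ and the single geometric datum $\omega_{\nu\mu}(Y)$.

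The main obstacle I anticipate is bookkeeping the first-order term cleanly: one must verify that the $\chi$-contribution genuinely drops out and that the $a(\cdot)^{-r\alpha-\rho}$-contribution is exactly (a scalar multiple of) $M(\ip{Y}{\Ad(\cdot)H_0})$ with the scalar being linear in $r$ — this requires the normalization $\ip{H_0}{H_0}=1$ and the eigenvalue structure $0,\pm1$ of $\ad(H_0)$, together with the fact that $g\mapsto a(g)$ is right $MN$-invariant (used to identify the relevant derivative with $\ip{Y}{\Ad(\cdot)H_0}$). Once the first-order decomposition is in hand, the second-order/Casimir step is essentially the same computation as in the scalar case treated in \cite{op} and in \cite{boo}, with $\chi$ riding along harmlessly because it is a character and the Casimir of $\g k$ is insensitive to the twist beyond the shift already recorded in $\omega(\mu)$. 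I would therefore present the first-order expansion in detail and then invoke the spectrum-generating identity of \cite{boo} for the passage from first order to (\ref{eq9}).
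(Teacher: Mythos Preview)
Your approach and the paper's are aligned in spirit: both ultimately defer to the computation in \cite{boo} (Corollary~2.6 there), and the only new ingredient in the line-bundle setting is to check that the twist by $\chi$ does not disturb that computation. The paper dispatches this in one line by observing that the $\chi$-dependent shift $c$ in the Casimir eigenvalue cancels in the difference $\omega(\nu)-\omega(\mu)$, and then cites \cite{boo} for everything else. You instead try to rederive the \cite{boo} identity from scratch while tracking $\chi$.

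Your first-order analysis is essentially correct, though the justification you give is murky. The clean statement is this: in the decomposition $\g g=\g k\oplus(\g m\cap\g s)\oplus\g a\oplus\g n$ (which is direct, since $\g k\cap\g p=\g l$), the derivative $\frac{d}{ds}\big|_{s=0}m_P(\exp(-sY)k)$ is exactly the $(\g m\cap\g s)$-component of $-\Ad(k^{-1})Y$, and $d\chi$ vanishes on $\g m\cap\g s$ by the very definition (\ref{eq:chi-M}) of the extension of $\chi$ to $M$. That is the whole reason the $\chi$-factor contributes nothing at first order; your phrase ``its $L$-part is constant to first order since $\g q$-directions would be needed'' obscures this.

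Your second-order step, however, is not the mechanism used in \cite{boo}. The identity there is not ``$\Omega=-\sum_j\pi_{r\alpha}(X_j)^2$ up to lower order over a basis of $\g s$'' (which is not correct as stated), but rather a commutator relation of the form $2\,\pi_{r\alpha}(Y)=[\Omega,\,M(f_Y)]+2r\,M(f_Y)$ with $f_Y(k)=\ip{Y}{\Ad(k)H_0}$, from which (\ref{eq9}) drops out immediately upon projecting, since $\mathrm{pr}_\nu\circ[\Omega,M(f_Y)]\big|_{L^2_\mu}=(\omega(\nu)-\omega(\mu))\,\omega_{\nu\mu}(Y)$. Since you end by invoking \cite{boo} anyway, this does not derail your proposal, but if you want to write out the argument rather than cite it, use the commutator form; the ``sum of squares over $\g s$'' heuristic will not close.
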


\begin{proof}
Note that 
\[\ip{\nu + 2 \rho_\mathfrak{k}}{\nu} + c - \ip{\mu + 2\rho_\mathfrak{k}}{\mu} - c = \omega (\nu) - \omega (\mu)\]
so that the constant $c$ is going to be canceled out. The rest is given by the proof of Corollary 2.6 in \cite{boo}.
\end{proof}

\begin{theorem}
Let $\mu \in \Lambda_\chi^+$, $\nu \in S (\mu)$ and $\lambda = r \alpha \in \g{a}_{\C}^\ast$. Then 
\begin{equation}
\label{eq10}
\frac{\eta_\nu (\lambda)}{\eta_\mu (\lambda)} = \frac{2 r - \omega (\nu) + \omega (\mu)}{2 r + \omega (\nu) -
\omega (\mu)}.
\end{equation}
The $K$-spectrum $\{\eta_\mu (\lambda)\}_{\mu \in \Lambda_\chi^+}$ and thus $J (\lambda)$ is uniquely determined by
(\ref{eq10}) and the normalization $\eta_{\mu^0} (\lambda)$, where $\mu^0$ is chosen as the smallest highest weight in 
$\Lambda_\chi^+$.
\end{theorem}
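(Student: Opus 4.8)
The plan is to derive the ratio formula (\ref{eq10}) by playing off the $G$-intertwining property of $J(\lambda)$ (Theorem \ref{thm1}) against the $K$-equivariant multiplication operator $\Phi_\mu$, exactly in the spirit of the spectrum generating method of \cite{boo}. The key point is that the ``raising/lowering'' direction between the $K$-types $L^2_\mu$ and $L^2_\nu$ inside $L^2(\cB;\cL_\chi)$ is implemented on the one hand by the Lie algebra action $\pi_{r\alpha}(Y)$ for $Y\in\g{s}_\C$, and on the other hand by $\omega_{\nu\mu}(Y)=\mathrm{pr}_\nu\circ M(\ip{Y}{\Ad(\cdot)H_0})$; Lemma using (\ref{eq9}) relates the two, and $J(\lambda)$ commutes appropriately with each.

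The steps, in order. First I would fix $\mu\in\Lambda_\chi^+$, $\nu\in S(\mu)$, pick $Y\in\g{s}_\C$ so that $\omega_{\nu\mu}(Y)\neq 0$ (possible since $\nu\in S(\mu)$ means $L^2_\nu$ occurs in $\im\Phi_\mu$), and consider the operator $\mathrm{pr}_\nu\circ J(\lambda)\circ\pi_{r\alpha}(Y)\big|_{L^2_\mu}$. Since $Y\in\g{s}$, $\exp(tY)\in G$, so differentiating the intertwining identity (\ref{eq6}) at $t=0$ gives $J(\lambda)\circ d\pi_\lambda(Y) = d\pi_{-\lambda}(\theta Y)\circ J(\lambda) = -d\pi_{-\lambda}(Y)\circ J(\lambda)$, because $\theta Y=-Y$ on $\g{s}$. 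Second, I would evaluate both sides of this differentiated identity on $L^2_\mu$ and project to $L^2_\nu$. On $L^2_\mu$, $J(\lambda)$ acts as $\eta_\mu(\lambda)\,\mathrm{id}$ by (\ref{eq8}), and $J(\lambda)$ restricted to $L^2_\nu$ on the right acts as $\eta_\nu(\lambda)\,\mathrm{id}$. Third, apply (\ref{eq9}) twice — once with parameter $r$ corresponding to $\lambda$ and once with $-r$ corresponding to $-\lambda$ — to convert $\mathrm{pr}_\nu\circ\pi_{r\alpha}(Y)|_{L^2_\mu}$ and $\mathrm{pr}_\nu\circ\pi_{-r\alpha}(Y)|_{L^2_\mu}$ each into a scalar multiple of the same operator $\omega_{\nu\mu}(Y)$:
\[
\mathrm{pr}_\nu\circ\pi_{r\alpha}(Y)\big|_{L^2_\mu} = \tfrac12\bigl(\omega(\nu)-\omega(\mu)+2r\bigr)\,\omega_{\nu\mu}(Y),
\qquad
\mathrm{pr}_\nu\circ\pi_{-r\alpha}(Y)\big|_{L^2_\mu} = \tfrac12\bigl(\omega(\nu)-\omega(\mu)-2r\bigr)\,\omega_{\nu\mu}(Y).
\]
Putting these into the differentiated intertwining identity yields
\[
\eta_\mu(\lambda)\cdot\tfrac12\bigl(\omega(\nu)-\omega(\mu)+2r\bigr)\,\omega_{\nu\mu}(Y)
= -\,\eta_\nu(\lambda)\cdot\tfrac12\bigl(\omega(\nu)-\omega(\mu)-2r\bigr)\,\omega_{\nu\mu}(Y).
\]
Cancelling the nonzero operator $\omega_{\nu\mu}(Y)$ and rearranging gives
\[
\frac{\eta_\nu(\lambda)}{\eta_\mu(\lambda)}
= -\,\frac{\omega(\nu)-\omega(\mu)+2r}{\omega(\nu)-\omega(\mu)-2r}
= \frac{2r-\omega(\nu)+\omega(\mu)}{2r+\omega(\nu)-\omega(\mu)},
\]
which is (\ref{eq10}), valid for $\lambda\in\g{a}_\C^\ast(c_P)$ and then everywhere by meromorphic continuation. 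For the uniqueness clause: $S(\mu)$ links $\Lambda_\chi^+$ into a connected graph rooted at the minimal weight $\mu^0$ (this is the combinatorial input about $\chi$-spherical highest weights promised in the introduction and to be established in Section \ref{grassmann}), so starting from $\eta_{\mu^0}(\lambda)$ one recursively determines every $\eta_\mu(\lambda)$ via (\ref{eq10}); since $J(\lambda)=(\eta_\mu(\lambda)\,\mathrm{id})_\mu$, the operator itself is then determined.

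The main obstacle is not the algebra above — which is a bookkeeping exercise once (\ref{eq6}) is differentiated and (\ref{eq9}) is invoked — but verifying that the chain of adjacencies through $S(\mu)$ actually connects all of $\Lambda_\chi^+$ to $\mu^0$, and that at each step there really is a $Y\in\g{s}_\C$ with $\omega_{\nu\mu}(Y)\neq0$ (equivalently, that $\nu\in S(\mu)$ is witnessed by a nonzero component of $\Phi_\mu$). This requires the explicit description of the highest weights of $\chi$-spherical representations of $K$ and the branching behaviour of $V_\mu\otimes\g{s}_\C$, which is precisely the content deferred to Sections \ref{grassmann} and \ref{spectrum}; here one only needs the abstract existence, i.e.\ that $S(\mu)$ is nonempty and generates the lattice, together with multiplicity one so that Schur's lemma applies cleanly at every stage. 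A secondary technical point is the interchange of differentiation in $t$ with the (a priori only meromorphic) $\lambda$-dependence of $J(\lambda)$; this is handled by first working in the region $\g{a}_\C^\ast(c_P)$ where (\ref{eq:J}) converges absolutely and the integrand is smooth in the $G$-variable, and then extending in $\lambda$ by meromorphic continuation as in the proof of Theorem \ref{thm1}.
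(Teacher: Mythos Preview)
Your approach is exactly the paper's: differentiate the intertwining identity (\ref{eq6}), use $\theta(Y)=-Y$ on $\g{s}$ to obtain $J(\lambda)\circ\pi_\lambda(Y)=-\pi_{-\lambda}(Y)\circ J(\lambda)$, commute $J(\lambda)$ past $\mathrm{pr}_\nu$, invoke (\ref{eq9}) at parameters $\pm r$, and cancel the nonzero $\omega_{\nu\mu}(Y)$.

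There is, however, a bookkeeping slip in your displayed intermediate identity: the scalars $\eta_\mu(\lambda)$ and $\eta_\nu(\lambda)$ are on the wrong sides. Tracing the operators, on the left-hand side of $J(\lambda)\circ\pi_{r\alpha}(Y)=-\pi_{-r\alpha}(Y)\circ J(\lambda)$ one first commutes $J(\lambda)$ with $\mathrm{pr}_\nu$ and then applies it to the image of $\omega_{\nu\mu}(Y)$, which lies in $L^2_\nu$; thus the factor there is $\eta_\nu(\lambda)$, not $\eta_\mu(\lambda)$. On the right-hand side $J(\lambda)$ acts first on $L^2_\mu$, giving $\eta_\mu(\lambda)$. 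The correct line is
\[
\eta_\nu(\lambda)\cdot\tfrac12\bigl(\omega(\nu)-\omega(\mu)+2r\bigr)\,\omega_{\nu\mu}(Y)
\;=\;-\,\eta_\mu(\lambda)\cdot\tfrac12\bigl(\omega(\nu)-\omega(\mu)-2r\bigr)\,\omega_{\nu\mu}(Y),
\]
which is precisely what the paper obtains. Your subsequent algebraic step
\[
-\,\frac{\omega(\nu)-\omega(\mu)+2r}{\omega(\nu)-\omega(\mu)-2r}\;=\;\frac{2r-\omega(\nu)+\omega(\mu)}{2r+\omega(\nu)-\omega(\mu)}
\]
is also incorrect (the left side actually equals the reciprocal $\tfrac{2r+\omega(\nu)-\omega(\mu)}{2r-\omega(\nu)+\omega(\mu)}$); the two slips happen to cancel to yield (\ref{eq10}), but the intermediate computation should be fixed. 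Apart from this, the argument is sound and matches the paper, and your explicit remarks on choosing $Y$ with $\omega_{\nu\mu}(Y)\neq 0$ and on the connectivity of $\Lambda_\chi^+$ via $S(\mu)$ are welcome clarifications that the paper leaves implicit.
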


\begin{proof}
We apply $J (\lambda)$ to (\ref{eq9}) and get
\[J (\lambda) \left[\mathrm{pr}_\nu \circ \pi_{r \alpha} (Y) \big|_{L_\mu^2 (\cB; \cL_\chi)}\right] = 
\frac{1}{2} (\omega (\nu) - \omega (\mu) + 2 r) \eta_\nu (\lambda) \omega_{\nu \mu} (Y)\, .\]
On the other hand, using (\ref{eq6}) and the fact that $\theta (Y) = - Y$ for $Y \in \mathfrak{s}_\C$, 
$$J (\lambda) \circ \pi_\lambda (Y) = - \pi_{- \lambda} (Y) \circ J (\lambda)\, .$$
As $J (\lambda)$ commutes with $\mathrm{pr}_\nu$, 
$$J (\lambda) (\mathrm{pr}_\nu \circ \pi_{r \alpha} (Y)) = \mathrm{pr}_\nu \circ (- \pi_{- r \alpha} (Y) \circ J (\lambda)).$$
Therefore,
\[J (\lambda) \left[\mathrm{pr}_\nu \circ \pi_{r \alpha} (Y) \big|_{L_\mu^2 (\cB; \cL_\chi)}\right] = 
- \frac{1}{2} (\omega (\nu) - \omega (\mu) - 2 r) \omega_{\nu \mu} (Y) \eta_\mu (\lambda)\, .
\]
Since $\omega_{\nu \mu} (Y)$ is nonzero, we have
$$[\omega (\nu) - \omega (\mu) + 2 r] \eta_\nu (\lambda) = [- \omega (\nu) + \omega (\mu) + 2 r] \eta_\mu
(\lambda).$$
Thus, (\ref{eq10}) follows now. We can start to compute an initial eigenvalue $\eta_{\mu^0} (\lambda)$ and 
then reach the rest by the inductive formula (\ref{eq10}).
\end{proof}

\begin{remark}
We say an element $\mu \in \Lambda_\chi^+$ is smallest if the standard norm of $\mu$ is minimal.
\end{remark}

\section{The Grassmann Manifolds}
\label{grassmann}

\noindent
In Section \ref{sgo} we have seen that the spectral functions $\eta_\mu (\lambda)$ in (\ref{eq8}) are parametrized by the $\chi$-spherical 
representations of $K$. The main purpose of this section is to study 
the structure of line bundles on Grassmann manifolds $K/L$, to present the classification of the $\chi$-spherical representations of $K$, and to discuss 
the connection of $\chi$-spherical functions on $K$ to their corresponding hypergeometric functions in the Heckman-Opdam sense. 
These results will be used in Section \ref{spectrum}.

Let $\K$ denote one of the fields $\R$ or $\C$. Set $d = \dim_\R \K$, which is $1$ or $2$ for $\K = \R$ or $\C$, respectively. 
Let $G = \SL (n+1, \K)$.   
Denote by $\gp (\K)$ the Grassmann manifold of $p$-dimensional linear subspaces of $\K^{n+1}$. 
Since $\gp (\K) \cong \mathrm{G}_{n + 1 - p} (\K)$, we can assume $2 p \leq n + 1$. 
Let $q = n+1 - p$. Let $e_1 \dotsc, e_{n+1}$ be the standard basis of $\K^{n+1}$. Let 
$b_0 = \K e_1 \oplus \cdots \oplus \K e_p \in \gp (\K)$. 
Then $\gp (\K) = K \cdot b_0 \cong K / L = G / P = \cB$,
where $P$ is a maximal parabolic subgroup of $G$, and $K/L$ is a hermitian compact symmetric space (it is a symmetric $R$-space).

The classification of hermitian type Grassmann manifolds $K/L$ and their relevant information are given in the following table. 
Here, $t = \dim \g{b}$ and $s = \dim_\R K/L$. Refer to Section 2 in \cite{hol}
for more discussion.

{\small 
\begin{equation}
\label{t1} 
\centering
\begin{tabular}[c]{|c||c|c|c|c|c|c|}
\hline
\multicolumn{7}{|c|}{Classification of Grassmann manifolds of} \\
\multicolumn{7}{|c|}{Hermitian type: $G$ noncompact, $K$ compact}\\
\multicolumn{7}{|c|}{ } \\
\hline \hline
& $G$ & $K$ & $L$ & $t$ & $s$ & remark\\
\hline
\hline
$1$ & $\SL (n+1, \R)$ & $\mathrm{SO} (n+1)$ & $\mathrm{S} (\mathrm{O} (p) \times \mathrm{O} (q))$ &
$p$ & $2 q$ & $p=2, q \geq 3$\\
\hline
$2$ & $\SL (n+1, \C)$ & $\mathrm{SU} (n+1)$ & $\mathrm{S} (\mathrm{U} (p) \times \mathrm{U} (q))$
& $p$ & $2 p q$ & $q \geq p \geq 1$\\
\hline
\end{tabular}
\end{equation}}

In Table \ref{t1} the complex Grassmann manifold $\mathrm{SU} (n+1) / \mathrm{S} (\mathrm{U} (p) \times \mathrm{U} (q))$ 
is simply connected. The real Grassmann manifold $\mathrm{SO} (n+1) / \mathrm{S} (\mathrm{O} (p) \times \mathrm{O} (q))$
is not simply connected, but which has a simply connected covering 
\[\mathrm{SO} (n+1) / (\mathrm{SO} (p) \times \mathrm{SO} (q)).\]

{}From now on we specialize our work to cases in Table \ref{t1}. Define an invariant $\R$-invariant form on $\g{g}$ by
\begin{equation}
\label{eq:invform}
\langle X,\; Y\rangle := \frac{n+1}{p q}\; \re (\Tr (X Y)).
\end{equation}
The form (\ref{eq:invform}) agrees with the invariant form in Section \ref{sgo}. 
For each case in Table \ref{t1}, we can write down explicit expressions of $\theta$,
$H_0$, $\g{s}$, $\g{m}$, $\g{a}$, $\g{n}$, $\b{\g{n}}$, $M A$, $N$, $P$, $\tau$, $\g{q}$, $\g{b}$, and so on.
They were given in Section 4 and Section 5 of \cite{op}. 
In particular, we mention here what we need to use:
\begin{equation}
\label{eq28}
M A = \left\{ 
\begin{pmatrix}
a & 0\\
0 & b 
\end{pmatrix}\; \Big|\; a \in \GL (p, \K),\, b \in \GL (q, \K),\, \det a  \det b = 1 \right\}\, ,
\end{equation}
and the elements of $\exp (\g{b})$,
\begin{equation}
\label{eq:B}
\exp Y (\mathbf{t}) = 
\begin{pmatrix}
\mathrm{diag} (\cos t_1, \dotsc, \cos t_p) & 0 & - \mathrm{diag} (\sin t_1, \dotsc, \sin t_p)\\
0 & I_{q - p} & 0\\
\mathrm{diag} (\sin t_1, \dotsc, \sin t_p) & 0 & \mathrm{diag} (\cos t_1, \dotsc, \cos t_p)
\end{pmatrix},
\end{equation}
where $Y (\mathbf{t}) \in \g{b}$ and $\mathbf{t} = (t_1, \dotsc, t_p)^T \in \R^p$ (cf. \cite[p.286]{op}). 
Let $\{\eps_1, \dotsc, \eps_p\}$ be an orthogonal basis of $i \g{b}^\ast$ with $\eps_j (Y (\mathbf{t})) = i t_j$.
We identify the element 
\[\sum_{j=1}^p \lambda_j \eps_j\; \stackrel{\cong}{\longleftrightarrow}\; (\lambda_1, \dotsc, \lambda_p).\]

Next, we classify nontrivial characters of $L$ which 
determine nontrivial line bundles over $K/L$, and the extensions of these characters to $M$.

\begin{proposition}
In case $\K = \R$, the only nontrivial character of $L$ is given by 
\begin{equation}
\label{eq:chi-nonc}
\chi\, 
\begin{pmatrix}
A & 0\\
0 & B
\end{pmatrix} \longmapsto \det A,\quad A \in \mathrm{O} (p),\, B \in \mathrm{O} (q).
\end{equation}
In case $\K = \C$, the group of nontrivial characters of $L$ is parametrized by $\Z$. Precisely, fix $Z \in \g{z} (\g{l})$ as in 
\cite[p.283, (3.1)]{sch}. Let $l \in \Z$ and define $\chi_l: L \to \mathbb{T}$ by 
\begin{equation}
\label{eq:chi-c}
\chi_l (\exp (t Z) k) = e^{i l t},\quad t \in \R\; \mathrm{and}\; k \in [L, L].
\end{equation}
Then $\chi_l$ is a well defined character on $L$. If $\chi$ is a character on $L$, then there is a unique $l \in \Z$ such that 
$\chi = \chi_l$. 
\end{proposition}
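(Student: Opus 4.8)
The plan is to treat the two fields separately, since the structure of $L$ differs markedly. In both cases the strategy is to compute $\mathrm{Hom}(L,\mathbb{T})$ from the structure of $L$, using that a character is trivial on the commutator subgroup $[L,L]$ and hence factors through $L/[L,L]$, and that a character of a connected group is determined by its differential, a purely Lie-algebraic object on $\g{z}(\g{l})$.

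\medskip

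For $\K=\R$, we have $L = \mathrm{S}(\mathrm{O}(p)\times\mathrm{O}(q))$ with $p=2$, $q\geq 3$. First I would note that the identity component is $L_0 = \mathrm{S}(\mathrm{SO}(p)\times\mathrm{SO}(q))$ which, since $p=2$, is isomorphic to $\mathrm{SO}(2)\times\mathrm{SO}(q)$; its commutator subgroup contains $\{e\}\times\mathrm{SO}(q)$ (as $q\geq 3$ makes $\mathrm{SO}(q)$ have finite abelianization, in fact perfect for $q\geq 3$, $q\neq 4$; for $q=4$ one handles the extra $\mathbb{Z}/2$ separately), so any character is trivial on $\mathrm{SO}(q)$ and on $[L,L]$. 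Then $L/[L,L]$ is a finite extension of $\mathrm{SO}(2)$, but the constraint from connected components (the group of components of $L$ is generated by a reflection in the first block times a compensating reflection in the second, which has order $2$) together with the determinant-one condition forces the abelianization to be exactly $\Z/2$, generated by the class of $\mathrm{diag}(-1,1;\,-1,1,\dots,1)$; the only nontrivial character sends this to $-1$, which is exactly $A\mapsto\det A$ as in \eqref{eq:chi-nonc}. I would verify this formula is well defined (independent of the splitting into blocks, and $=1$ on $[L,L]$) and nontrivial by exhibiting an element with $\det A=-1$.

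\medskip

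For $\K=\C$, we have $L=\mathrm{S}(\mathrm{U}(p)\times\mathrm{U}(q))$, which is connected, so characters are classified by their differentials on $\g{z}(\g{l})$, a one-dimensional real space spanned by the fixed element $Z$ from \cite[p.283,(3.1)]{sch}; equivalently, characters correspond to characters of the torus $\exp(\R Z)$ that are trivial on $[L,L]$. The key point is that $\exp(\R Z)$ is a circle, not a line: one must identify the kernel of $t\mapsto\exp(tZ)$, i.e. the period, to pin down exactly which $e^{ict}$ descend. With $Z$ normalized as in \cite{sch}, $\exp(2\pi Z)=e$ (or $\exp$ of the appropriate lattice generator equals $e$), so the well-defined characters of $\exp(\R Z)$ are precisely $e^{ilt}$, $l\in\Z$. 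Since $\g{z}(\g{l})\oplus[\g{l},\g{l}]=\g{l}$ and every character kills $[L,L]$, the assignment \eqref{eq:chi-c} is forced; I would check it is well defined (the decomposition $m=\exp(tZ)k$ with $k\in[L,L]$ is unique up to $\exp(\R Z)\cap[L,L]$, which is trivial once $Z$ is chosen correctly) and that $l\mapsto\chi_l$ is a bijection $\Z\to\widehat{L}$, injectivity being the statement that $\chi_l|_{\exp(\R Z)}=1$ implies $l=0$, and surjectivity following from $L/[L,L]\cong\exp(\R Z)/(\exp(\R Z)\cap[L,L])$.

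\medskip

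The main obstacle I anticipate is the bookkeeping around components and periods: in the real case, correctly computing the component group of $\mathrm{S}(\mathrm{O}(p)\times\mathrm{O}(q))$ and its abelianization (and handling $q=4$, where $\mathrm{SO}(4)$ is not perfect), and in the complex case, getting the precise normalization of $Z$ so that the period is exactly $2\pi$ and so that $\exp(\R Z)\cap[L,L]=\{e\}$ — this is where one genuinely leans on the explicit choice in \cite{sch} rather than abstract nonsense. The Lie-algebra computation of $\g{z}(\g{l})$ being one-dimensional (so that $\widehat{L}$ is at most rank one, hence $\cong\Z$ for connected $L$) is routine given the block description of $L$.
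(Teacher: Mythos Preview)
Your real-case argument is sound and in fact more detailed than the paper's, which simply observes that a character of $L$ is determined by a character of $\mathrm{O}(2)$ and that the only nontrivial such is $\det$. Your worry about $q=4$ is unnecessary: $\mathrm{SO}(4)$, like every $\mathrm{SO}(n)$ with $n\geq 3$, is connected semisimple and hence perfect.

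In the complex case your strategy is right, but two concrete claims are wrong, and they are precisely the ones on which well-definedness rests. With the normalization in \cite{sch} one has $Z=\mathrm{diag}\bigl(\tfrac{i}{p}I_p,\,-\tfrac{i}{q}I_q\bigr)$, so $\exp(2\pi Z)=\mathrm{diag}\bigl(e^{2\pi i/p}I_p,\,e^{-2\pi i/q}I_q\bigr)$, which is \emph{not} the identity when $p>1$ or $q>1$; the circle $\exp(\R Z)\subset L$ has period $2\pi\,\mathrm{lcm}(p,q)$ in $t$, not $2\pi$. Nor is $\exp(\R Z)\cap[L,L]$ trivial: since $[L,L]=\mathrm{SU}(p)\times\mathrm{SU}(q)$, one finds $\exp(tZ)\in[L,L]$ iff $\det(e^{it/p}I_p)=e^{it}=1$, i.e.\ $t\in 2\pi\Z$, so the intersection is the nontrivial cyclic group $\{\exp(2\pi n Z):n\in\Z\}$. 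What actually makes $\chi_l$ well defined is that the ambiguity in writing $m=\exp(tZ)k$ with $k\in[L,L]$ is exactly $t\mapsto t+2\pi n$, and $e^{il(t+2\pi n)}=e^{ilt}$. Equivalently, $(A,B)\mapsto\det A$ induces an isomorphism $L/[L,L]\cong\mathrm{U}(1)$, under which $\chi_l$ becomes $(\det A)^l$. The parametrization by $\Z$ thus reflects the period of $t\mapsto\exp(tZ)$ \emph{modulo} $[L,L]$, which is $2\pi$, not its period in $L$ itself; your argument conflates the two, and as written does not establish that the $\chi_l$ are well defined nor that they exhaust $\widehat{L}$.
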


\begin{proof}
In case $\K = \R$, $L = \mathrm{S} (\mathrm{O} (p) \times \mathrm{O} (q))$ with $p=2$.  
So a character on $L$ can be determined by either a character on $\mathrm{O} (2)$ or a character on $\mathrm{O} (q)$. 
We know that $\mathrm{O} (n)$ is not abelian for $n > 1$. A simple observation shows that the only nontrivial character 
of $\mathrm{O} (2)$ is given by $x \mapsto \det x$, $x \in \mathrm{O} (2)$. Therefore, the only nontrivial character 
of $L$ is given by (\ref{eq:chi-nonc}). In case $\K = \C$, the formula (\ref{eq:chi-c}) follows from 
Proposition 3.4 in \cite{sch}.
\end{proof}

Recall that a character on $L$ can be uniquely extended to a character on $M$ by (\ref{eq:chi-M}), and 
conversely a character on $M$ restricted to $L$ is exactly a character on $L$. 
For an element $g \in \overline{N} M A N$, 
\begin{equation}
\label{eq26}
g = 
\begin{pmatrix}
X & Y\\
V & W
\end{pmatrix} \in G,\qquad X \in \mathrm{GL} (p, \K)\, .
\end{equation}

\begin{proposition}
If $\chi$ is a (nontrivial) character on $M$, then 
\begin{equation}
\label{eq38}
\chi (m (g)) = \left(\frac{\det X}{|\det X|}\right)^j
\end{equation}
for some $j \in \mathbb{Z}$. The integer $j$ is determined as: $j = 1$ if $\mathbb{K} = \mathbb{R}$; and $j = l$ if 
$\mathbb{K} = \mathbb{C}$ with $l$ being the integers parametrizing the characters of the connected component $L_0$ of $L$
(the same $l$ as in (\ref{eq:chi-c})).
\end{proposition}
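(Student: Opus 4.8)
The plan is to reduce everything to understanding the $MA$-component $m(g)$ of an element $g \in \overline{N}MAN$ when written in block form, and then evaluate the character $\chi$ on it. First I would use the block decomposition $g = \overline{n}\, m\, a\, n$ with $\overline{n} \in \overline{N}$, $m \in M$, $a \in A$, $n \in N$, and write each factor explicitly in block form according to (\ref{eq28}): elements of $\overline{N}$ are lower block-unitriangular, elements of $N$ are upper block-unitriangular, and elements of $MA$ are block-diagonal $\mathrm{diag}(a, b)$ with $a \in \GL(p,\K)$, $b \in \GL(q,\K)$, $\det a \det b = 1$, as in (\ref{eq28}). Multiplying these out, the top-left block $X$ of $g$ in (\ref{eq26}) equals the top-left block of $\overline{n}\, m\, a\, n$; since $\overline{n}$ contributes $I_p$ in the top-left corner and $n$ contributes $I_p$ as well (both being unitriangular with identity diagonal blocks), we get $X = a$, the $\GL(p,\K)$-component of the $MA$-part. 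Then I write $m(g) = \mathrm{diag}(a_0, b_0)$ where $a_0 \in \GL(p,\K)$ is the $L$- (or $M$-) part of $a$, extracted by a polar-type factorization $a = a_0 \exp(Y)$ with $\exp(Y)$ in the $A$-part (or in $\exp(\g{m} \cap \g{s})$), so that up to a positive scalar $a_0$ and $a = X$ point in the same direction, i.e. $\det X / |\det X| = \det a_0 / |\det a_0|$ (and $|\det a_0| = 1$ in the real case, where $a_0 \in \mathrm{O}(p)$).

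Next I would invoke the two preceding propositions for the explicit form of $\chi$. In the real case $\K = \R$, the character on $M$ restricts to (\ref{eq:chi-nonc}), so $\chi(m(g)) = \det a_0$; since $a_0 \in \mathrm{O}(p)$ we have $|\det a_0| = 1$ and $\det a_0 = \det X / |\det X|$ as noted, giving (\ref{eq38}) with $j = 1$. In the complex case $\K = \C$, the character $\chi = \chi_l$ is given by (\ref{eq:chi-c}) in terms of the central element $Z \in \g{z}(\g{l})$; I would pin down $Z$ concretely (it is, up to normalization, the block-diagonal element $\mathrm{diag}(i q I_p, -i p I_q)$ or its rescaling that exponentiates to the center of $L_0$), compute that $\exp(tZ)$ has top-left block $e^{itq} I_p$ so $\det(\text{top-left block of }\exp(tZ)) = e^{itqp}$, and track how $\chi_l(\exp(tZ)k) = e^{ilt}$ translates into a power of $\det X / |\det X|$. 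The key computation is that writing $a_0 = \exp(tZ)k'$ with $k' \in [L,L]$ (so $\det k' / |\det k'|$, restricted to the top-left block, contributes nothing because $[L,L] \subset \mathrm{SU}(p) \times \mathrm{SU}(q)$ up to the relevant determinant-one condition), one gets $\det X / |\det X| = e^{i t \cdot c}$ for an explicit constant $c$ depending on the normalization of $Z$, and matching exponents yields $\chi(m(g)) = (\det X / |\det X|)^l$, i.e. $j = l$.

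I expect the main obstacle to be the bookkeeping in the complex case: making the normalization of $Z$ in \cite[p.283, (3.1)]{sch} consistent with the parametrization (\ref{eq:chi-c}) and with the determinant-one constraint $\det a \det b = 1$ on $MA$, so that the constant relating $t$ to $\arg\det X$ comes out to exactly $1$ (rather than some integer multiple that would give $j = cl$ instead of $j = l$). Concretely, one must check that the generator $Z$ is scaled precisely so that $\exp(2\pi Z) = e$ on $L_0$ (i.e. $Z$ generates the circle $\g{z}(\g{l})/(\text{integer lattice})$), and that under this scaling the induced character on the top-left $\GL(p,\C)$-block is $a \mapsto (\det a / |\det a|)$ to the first power; this is where the factor $pq$ from the form (\ref{eq:invform}) and the structure of $\g{z}(\g{l})$ interact. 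Once that normalization is fixed, the rest is a direct substitution. A minor additional point is to confirm that the formula is independent of the choice of representative $m(g)$ (recall $m(g)$ is only determined up to the ambiguity noted after (\ref{eq:chi-M})), which follows because $\chi$ is by construction trivial on the $\exp(\g{m} \cap \g{s})$-ambiguity and on $[L,L]$, and because the expression $\det X / |\det X|$ only sees the well-defined part of $a$.
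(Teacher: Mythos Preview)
Your proposal is correct and follows essentially the same route as the paper. Both arguments identify the top-left $p\times p$ block $X$ of $g$ with the $\GL(p,\K)$-component of the $MA$-part (the paper writes this as the block-diagonal $\mathrm{diag}(X,\, W-VX^{-1}Y)$, you obtain it by noting that $\overline{n}$ and $n$ contribute identity blocks in the top-left corner), then evaluate $\chi$ on $m(g)$ by restricting to $L$ and matching against the explicit formulas (\ref{eq:chi-nonc}) and (\ref{eq:chi-c}). The normalization bookkeeping you anticipate in the complex case is exactly what the paper resolves: it records that the Schlichtkrull generator $Z=\mathrm{diag}(Z_1,Z_2)$ satisfies $\mathrm{Tr}(Z_1)=i$, so $\det(\exp(tZ_1))=e^{it}$ and comparison with $\chi_l(\exp(tZ))=e^{ilt}$ forces $j=l$ immediately; your concern about a spurious factor $c$ does not arise once this trace is pinned down.
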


\begin{proof}
Let $g \in G$ be as in (\ref{eq26}). We have 
\[
\begin{pmatrix}
X & 0\\
0 & W - V X^{-1} Y
\end{pmatrix} \in M A\]
satisfying the condition in (\ref{eq28}). Let $\chi$ be a character of $M$. Then (\ref{eq38}) follows with $j$ to be  
determined. 
(1) In case $\mathbb{K} = \mathbb{R}$: if we apply $\chi$ to elements in $L$ then (\ref{eq38}) should coincide with 
(\ref{eq:chi-nonc}). It gives $j = 1$.
(2) In case $\mathbb{K} = \mathbb{C}$ we write 
\[Z = 
\begin{pmatrix}
Z_1 & \\
& Z_2
\end{pmatrix},\quad \exp (t Z) = 
\begin{pmatrix}
e^{t Z_1} & \\
& e^{t Z_2}
\end{pmatrix} \in \mathrm{S} (\mathrm{U} (p) \times \mathrm{U} (q))\]
where $Z_1$ is a $p \times p$ block and $Z_2$ is a $q \times q$ block on the diagonal with $|\det e^{t Z_1}| = 1$
(recall \cite[p.283, (3.1)]{sch} for the construction of $Z$ in details). A direct calculation shows that 
$\mathrm{Tr} (Z_1) = i$ (a pure imaginary number). So 
\[\chi (\exp (t Z)) = \left(\frac{\det (\exp (t Z_1))}{|\det (\exp (t Z_1))|}\right)^j = 
\det (\exp (t Z_1))^j = e^{j t \mathrm{Tr} (Z_1)}\]
for some $j \in \mathbb{Z}$. The restriction of $\chi$ to $L$ is a (nontrivial) character on $L$, which should 
match (\ref{eq:chi-c}), that is, for some $l \in \Z$,
\[\chi (\exp (t Z)) = \chi_l (\exp (t Z)) = e^{i l t},\quad \forall t \in \mathbb{R}.\]
Therefore,
\[i j t = j t \mathrm{Tr} (Z_1) = i l t, \quad \forall t \in \mathbb{R}.\]
This leads to $j = l$.
\end{proof}

The next two lemmas give the description of root system $\Sigma$ and the classification of $\chi$-spherical 
representations of $K$.

\begin{lemma}
\label{lem:Sigma}
The set of roots of $\g{b}_\C$ in $\g{k}_\C$ is given by 
$$\Sigma = \{\pm \eps_i (1 \leq i \leq p),\quad \pm \eps_i \pm \eps_j (1 \leq i \ne j \leq p),\quad \pm 2 \eps_i (1 \leq i
\leq p)\}$$
with multiplicities $d (q - p)$, $d$, and $d - 1$, respectively, where $\pm$ signs are independent.
\end{lemma}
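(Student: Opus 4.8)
The plan is to compute the restricted root system $\Sigma = \Sigma(\g{k},\g{b})$ directly from the matrix models of $K$ and $\g{b}$ supplied in Section~\ref{grassmann}, handling the two cases $\K = \R$ (so $\g{k} = \g{so}(n+1)$, with $p=2$) and $\K = \C$ (so $\g{k} = \g{su}(n+1)$) in parallel. First I would recall that $\g{b}$ is the $p$-dimensional abelian subspace spanned by the matrices $Y(\mathbf{t})$ appearing in \eqref{eq:B}, and that $\{\eps_1,\dots,\eps_p\}$ is the dual basis with $\eps_j(Y(\mathbf{t})) = it_j$. The key structural fact to exploit is that the symmetric pair $(\g{k},\g{l})$ with $\g{k}/\g{l}$ of hermitian type is, in both cases, the compact dual of the complexified Grassmannian; equivalently, $(\g{k},\g{l})$ is a Riemannian symmetric pair of type $\mathrm{BDI}$ (for $\K=\R$, rank $p=2$) or $\mathrm{AIII}$ (for $\K=\C$, rank $p$), whose restricted root systems are classically known to be of type $(BC)_p$. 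So at the level of \emph{which} roots occur, the statement is immediate from the classification of Riemannian symmetric spaces; the content of the lemma is really the \emph{multiplicities}, and that these match $d(q-p)$, $d$, $d-1$ for the short, middle-length, and long roots respectively.

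The main computation I would actually carry out is the following. Complexify and decompose $\g{k}_\C$ under $\ad(\g{b}_\C)$. Writing a generic element of $\g{b}_\C$ as $Y = \sum_j t_j Y(e_j)$ (now $t_j \in \C$), one checks by a direct bracket computation with the block form \eqref{eq:B} that the eigenvalues of $\ad(Y)$ on $\g{k}_\C$ are exactly $0$ and $\pm it_i \pm it_j$ (all sign/index combinations, $1 \le i,j \le p$), where $i=j$ with both signs equal gives $\pm 2it_i$, $i=j$ with opposite signs gives $0$, and $i\ne j$ gives the middle roots $\pm\eps_i\pm\eps_j$; the pairing with the $\mathrm{diag}(\dots) \;/\; I_{q-p} \;/\; \mathrm{diag}(\dots)$ block decomposition of $\K^{n+1}$ also produces the short roots $\pm\eps_i$ coming from the off-diagonal blocks linking the $2p$ ``rotated'' coordinates to the remaining $q-p$ fixed coordinates $e_{p+1},\dots,e_q$ (in the labelling of \eqref{eq:B}). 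For the multiplicities I would count, case by case, the real dimension of each root space: the short root spaces $\g{k}_{\pm\eps_i}$ pair the $i$-th plane with the $(q-p)$-dimensional fixed subspace over $\K$, giving $d(q-p)$; the middle root spaces $\g{k}_{\pm\eps_i\pm\eps_j}$ pair the $i$-th and $j$-th planes, giving $d$; and the long root spaces $\g{k}_{\pm 2\eps_i}$ come from the ``internal'' rotation of the $i$-th plane, giving $d-1$ (the $-1$ reflecting that one real dimension is absorbed, e.g. by the trace/skew condition — for $\K=\R$, $d-1=0$, consistent with $\g{so}$ having no long roots, and for $\K=\C$, $d-1=1$, the standard $\mathrm{AIII}$ multiplicity). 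I would double-check these against the known dimension bookkeeping $\dim_\R \g{k}/\g{l} = s$ via $s = \sum_{\alpha\in\Sigma^+} m_\alpha$ using the values of $s$ tabulated in \eqref{t1}: for $\K=\C$ this reads $p\cdot d(q-p) + \binom{p}{2}\cdot 2d + p\cdot(d-1) = 2p(q-p) + 2p(p-1) + p = 2pq$, which checks, and the analogous check for $\K=\R$ (with $p=2$, $d=1$) gives $2(q-2) + 2 + 0 = 2q-2$ — wait, this should be compared with $s=2q$, so I would be careful here and verify whether the fixed-subspace dimension is $q-p$ or $q$ in the short-root count; the correct reconciliation (the off-diagonal blocks see all of $e_{p+1},\dots,e_{n+1}$, a space of $\K$-dimension $q$, minus a correction, or else the long roots contribute differently) is exactly the kind of index-chasing I would pin down carefully rather than wave at.

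The main obstacle I anticipate is precisely this multiplicity bookkeeping: getting the off-by-$p$ versus off-by-nothing right in the short-root multiplicity $d(q-p)$, and correctly identifying the $d-1$ (rather than $d$) for the long roots, which is where the $\SL$ (trace-zero / special) condition as opposed to $\GL$ enters. Both subtleties are standard in the theory of symmetric $R$-spaces, so I would resolve them by citing the explicit root-space description already worked out in \cite[\S\S4--5]{op} and \cite[\S2]{hol} for these exact Grassmannians, and present the bracket computation only in enough detail to make the eigenvalue pattern $\{0,\ \pm\eps_i,\ \pm\eps_i\pm\eps_j,\ \pm2\eps_i\}$ transparent, then quote the dimension count as the verification. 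No deep input is needed beyond the matrix models already in hand.
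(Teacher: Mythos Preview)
The paper does not actually prove this lemma: its entire proof is the sentence ``This is a well-known result'' together with citations to Moore \cite[Theorem~5.2]{mo} and Helgason \cite[pp.~528,~532]{h1}. So your plan to carry out a direct matrix computation of the restricted root system from the block model \eqref{eq:B} is a genuinely different, more self-contained route. What it buys you is independence from the classification tables; what the paper's approach buys is brevity, since the result is standard for the symmetric pairs of type $\mathrm{BDI}$ and $\mathrm{AIII}$.

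Your computation is essentially right, but the dimension check you use to catch errors is itself off, and this is what produced your ``wait'' moment. The correct identity is
\[
\sum_{\alpha \in \Sigma^+} m_\alpha \;=\; \dim_\R \g{q} - \dim_\R \g{b} \;=\; s - t \;=\; s - p,
\]
not $s$, since $\g{q} = \g{b} \oplus \bigoplus_{\alpha \in \Sigma^+}(\text{root-space part})$. For $\K = \R$ ($d=1$, $p=2$) your count $2(q-2) + 2 + 0 = 2q - 2$ matches $s - p = 2q - 2$ exactly, so there is no discrepancy. For $\K = \C$ ($d=2$) you also made an arithmetic slip: $2p(q-p) + 2p(p-1) + p = 2pq - p$, not $2pq$; and $2pq - p = s - p$ is again correct. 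So both checks close cleanly once you compare against $s - p$, and the multiplicity $d(q-p)$ for the short roots is right as stated---no off-by-$p$ correction is needed.
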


\begin{proof}
This is a well-known result. Refer to Moore \cite[Theorem 5.2]{mo} and \cite[p.528, 532]{h1}.
\end{proof}

{}From now on we fix a positive root system 
\[\Sigma^+ = \{\eps_i (1 \leq i \leq p),\quad \eps_i \pm \eps_j (1 \leq i < j \leq p),\quad 2 \eps_i (1 \leq i \leq p)\}\, .\]

\begin{theorem}
\label{thm5}
In case $\K = \R$, the sublattice $\Lambda_\chi^+$ of the set of highest weights of irreducible $\chi$-spherical 
representations (so $\Lambda_\chi^+ \cong \wh{K}_\chi$) is given by
\begin{equation}
\label{LamPlus1}
\{\mu \in i \mathfrak{b}^\ast\; \Big|\; \mu = \sum_{i = 1}^p \mu_j \eps_j,\;
\mu_1 \geq \dotsb \geq \mu_p \geq 0\; \text{and}\; \mu_j\; \text{odd for all}\; j\}.
\end{equation}
In case $\K = \C$, the set $\Lambda_\chi^+ = \Lambda_{\chi_l}^+ =: \Lambda_l^+$ (with $l \in \Z$)
of the highest weights of irreducible $\chi_l$-spherical representations is given by
\begin{equation}
\label{LamPlus}
\{\mu \in i \mathfrak{b}^\ast\; \Big|\; \mu = \sum_{i = 1}^p \mu_j \eps_j,\; \mu_i - \mu_j \in 2 \mathbb{Z}^+,
\mu_p \in |l| + 2 \mathbb{Z}^+,  1 \leq i < j \leq p\}.
\end{equation}
\end{theorem}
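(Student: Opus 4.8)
The plan is to determine $\Lambda_\chi^+$ in two steps: first identify the highest weights of \emph{all} irreducible representations of $K$ that contain a nonzero $\chi$-eigenvector for $L$, using branching from $K$ to $L$; second, in the real (non-simply-connected) case, cut down to the sublattice that actually comes from representations of $K$ rather than of its universal cover. Throughout I would use the Cartan–Helgason type criterion for line bundles: for $(\pi,V)\in\wh K$ with highest weight $\lambda$, the space $V^\chi$ is nonzero (and then one-dimensional, as already noted in Section~\ref{sgo}) if and only if $\lambda$ satisfies an integrality/positivity condition relative to the restricted root system $\Sigma$ of $(\g k,\g b)$ together with a compatibility condition with the differential $d\chi$ of $\chi$ on $\g z(\g l)$. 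Concretely, writing $\lambda = \sum_j \mu_j\eps_j$ with respect to the basis $\{\eps_j\}$ fixed before the theorem, the conditions will read: dominance $\mu_1\ge\cdots\ge\mu_p$ (possibly $\ge 0$), the half-integrality conditions $\langle\lambda,\alpha^\vee\rangle\in\Z_{\ge0}$ for $\alpha\in\Sigma^+$ forced by the structure of $K/L$ as a symmetric $R$-space, and a "leading coefficient" condition pinning $\mu_p$ (or all $\mu_j$ mod $2$) to the integer parametrizing $\chi$.

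For $\K=\C$: here $K=\mathrm{SU}(n+1)$, $L=\mathrm S(\mathrm U(p)\times\mathrm U(q))$, and $K/L$ is simply connected, so $\wh K_\chi\cong\Lambda_\chi^+$ with no extra conditions. I would compute the branching $\pi|_L$ explicitly — an irreducible of $\mathrm{SU}(n+1)$ restricted to the block-diagonal subgroup decomposes by the classical Littlewood–Richardson / Gelfand–Tsetlin rule — and ask when the one-dimensional character $\chi_l$ (given by $\det^{\,?}$ on the $\mathrm U(p)$ factor, twisted to stay in $\mathrm S(\cdots)$) appears. The determinant character of $\mathrm U(p)$ appears in the restriction precisely when the $\mathrm{SU}(n+1)$-highest weight, expressed in the first $p$ versus last $q$ coordinates, has its first $p$ entries all equal and its last $q$ entries all equal; translating "all equal" back through the restricted-root normalization $Y(\mathbf t)$ and the identification $\sum\lambda_j\eps_j\leftrightarrow(\lambda_1,\dots,\lambda_p)$ yields the conditions $\mu_i-\mu_j\in2\Z^+$ for $i<j$ and $\mu_p\in|l|+2\Z^+$. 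The factor of $2$ and the shift by $|l|$ come from the precise relation between the ambient $\mathfrak{su}(n+1)$ weight lattice and $\g b^\ast$, and from $\mathrm{Tr}(Z_1)=i$ computed in the previous proposition; I would invoke Schlichtkrull~\cite{sch}, Proposition~3.4 (already cited) for the bookkeeping. The case $l=0$ should reduce to the known $\Lambda^+$ of \cite{op}.

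For $\K=\R$: now $K=\mathrm{SO}(n+1)$ with $p=2$, $L=\mathrm S(\mathrm O(2)\times\mathrm O(q))$, $\chi=\det$ on the $\mathrm O(2)$ block, and $K/L$ is \emph{not} simply connected, so I would run the argument on the cover $\wt{K/L}=\mathrm{SO}(n+1)/(\mathrm{SO}(2)\times\mathrm{SO}(q))$ first and then descend. On the cover the line bundle is pulled back from a character of $\mathrm{SO}(2)\times\mathrm{SO}(q)$, and the Cartan–Helgason condition gives all highest weights $\mu=\mu_1\eps_1+\mu_2\eps_2$ with $\mu_1\ge\mu_2\ge0$ and $\mu_j$ of a fixed parity (coming from the $\det$-twist on the $\mathrm{SO}(2)$-factor, i.e. $\mu_j$ odd); this matches the structure of $\mathrm{SO}(2q+2)/\mathrm{U}(q+1)$-type restricted roots with the multiplicities from Lemma~\ref{lem:Sigma}. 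The extra step is to check that among these, the ones descending to genuine $\mathrm{SO}(n+1)$-representations sharing the $L$-type $\chi$ are exactly those with \emph{all} $\mu_j$ odd (the "additional conditions" alluded to in Section~\ref{sgo}): one verifies that the nontrivial element of $\ker(\wt{K/L}\to K/L)$ acts on the $\chi$-isotypic line by $(-1)^{\sum\mu_j}$, forcing $\sum\mu_j$ — hence each $\mu_j$ once the common parity is fixed — to be odd. I expect the main obstacle to be exactly this last descent computation: making precise the action of the deck group on the $\chi$-line and matching it against the parity of the weight, since it requires tracking the normalizations of $Z\in\g z(\g l)$, of $H_0$, and of the form $\langle\cdot,\cdot\rangle$ simultaneously; everything else is a bookkeeping exercise in classical branching rules plus the Cartan–Helgason theorem.
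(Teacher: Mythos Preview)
Your treatment of the complex case is essentially the paper's: both reduce to Schlichtkrull's classification (the paper cites \cite[Prop.~7.1, Thm.~7.2]{sch} and \cite[Thm.~3.1]{hol}), and your proposed Littlewood--Richardson computation would simply reprove that result. No issue there.

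For $\K=\R$ there is a genuine gap. The character $\chi$ of (\ref{eq:chi-nonc}) is $\det$ on the $\mathrm O(2)$-block, and $\det\equiv 1$ on $\mathrm{SO}(2)$; hence $\chi|_{L_0}$ is \emph{trivial}, a fact the paper uses explicitly later (``Since $\chi|_{L_0}=1$, $e_\mu$ is a unit $L_0$-fixed vector''). So on the cover $K/L_0$ you are looking at the ordinary Cartan--Helgason condition for the \emph{trivial} line bundle, and there is no ``$\det$-twist on the $\mathrm{SO}(2)$-factor'' forcing oddness at that stage. What Cartan--Helgason does give, via the restricted roots $\eps_i\pm\eps_j$, is $(\mu_i\pm\mu_j)/2\in\Z^+$, i.e.\ the $\mu_j$ share a common parity; but that parity is not yet pinned down.

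Your descent step is then also off: the nontrivial element of $L/L_0$ acts on the $L_0$-fixed line by $(-1)^{\mu_j}$ (the same for every $j$, by the common-parity constraint), \emph{not} by $(-1)^{\sum_j\mu_j}$. With $p=2$ and $\mu_1\equiv\mu_2\pmod 2$, your formula $(-1)^{\mu_1+\mu_2}$ is identically $+1$, which would say no $\chi$-spherical representations exist at all. The paper obtains the correct sign directly: it takes the explicit element $x=\exp Y(\pi,0)\in L\setminus L_0$ (using (\ref{eq:B})), notes $\chi(x)=-1$, computes $\pi_\mu(x)u=e^{i\pi\mu_1}u$ on a highest weight vector $u$, and pairs with $e_\mu$ to force $(-1)^{\mu_1}=-1$; then repeats with $\exp Y(0,\pi)$ for $\mu_2$. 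If you want to keep your cover-and-descend framework, replace your deck-action formula by this computation and drop the claim that oddness appears already on the cover.
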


\begin{proof}
(1) In case $\mathbb{K} = \mathbb{R}$, let $\chi$ be given by (\ref{eq:chi-nonc})
and $\mu \in \Lambda_\chi^+$. We have 
\[\mu_j = \frac{\ip{\mu}{\eps_j}}{\ip{\eps_j}{\eps_j}} \geq 0,\; \forall j\]
and also
\[\mu_i - \mu_j = \frac{\ip{\mu}{\eps_i - \eps_j}}{\|\eps_j\|^2} \geq 0,\quad i < j,\]
whence $\mu_i \geq \mu_j$. Consider an element in $L$ (but not in $L_0$), e.g., let $x = \exp (Y (\mathbf{t}))$
given by (\ref{eq:B}) with $\mathbf{t} = (t_1, t_2)$, $t_1 = \pi$ and $t_2 = 0$. We have 
$\chi (x) = -1$. Note that 
\[\pi_\mu (x) = \exp (\mu (Y (\mathbf{t})) = \exp (\sum_{j=1}^p \mu_j \eps_j (Y (\mathbf{t})) 
= \exp (\sum_{j=1}^p \mu_j \cdot i \cdot t_j) = e^{i \pi \mu_1}.\]
Let $u \in V_\mu$ be a highest weight vector of the weight $\mu$ such that $\ip{u}{e_\mu} \ne 0$. Then 
\[-1 \cdot \ip{u}{e_\mu} = \ip{ u}{ \chi (x)^{-1} e_\mu} = \ip{ u}{ \pi_\mu (x^{-1}) e_\mu } = \ip{ \pi_\mu (x) u}{ e_\mu} 
= e^{i \pi \mu_1} \ip{ u}{ e_\mu }\]
which implies that $e^{i \pi \mu_1} = -1$, i.e., $\mu_1$ is an odd integer. In a similar fashion, we can show all $\mu_j$ 
($1 \leq j \leq p$) are odd positive integers. Hence, (\ref{LamPlus1}) follows.

(2) In case $\K = \C$, the classification (\ref{LamPlus}) follows from Proposition 7.1 and Theorem 7.2 in \cite{sch}. Also 
see Theorem 3.1 in \cite{hol}. 
\end{proof}

Let $\mathcal{O}_s^+ = \{\eps_j\}$ and 
\[\rho_s = \frac{1}{2} \sum_{\alpha \in \mathcal{O}_s^+} \eps_j = \frac{1}{2} (\eps_1 + \cdots + \eps_p).\] 
It follows from Proposition 3.3 in \cite{hol} that $\Lambda_l^+ = \Lambda_0^+ + 2 |l| \rho_s$ where
\begin{equation}
\label{eq31}
\Lambda_0^+ = \{\mu \in i \g{b}^\ast\; \Big|\; \frac{\langle \mu, \alpha\rangle}{\langle \alpha, \alpha \rangle} \in
\Z^+,\; \forall \alpha \in \Sigma^+\}.
\end{equation} 
Let $k = 1$ for the case $\K = \R$, and $k = |l|$ (with $l \in \Z$) for the case $\K = \C$. 
Set 
\begin{equation}
\label{eq32}
\mu^0 = 2k \rho_s = k (\eps_1 + \cdots + \eps_p).
\end{equation}
It is easy to see from (\ref{LamPlus1}), (\ref{LamPlus}), and (\ref{eq31}) that $\mu^0 \in \Lambda_\chi^+$ is 
the smallest\footnote{For an element $\mu \in \g{b}_{\C}^\ast$ we say that $\mu$ is smallest if its standard norm $\|\mu\|$ is minimal.}
element in $\Lambda_\chi^+$ for the cases $\K=\R$ and $\K=\C$, respectively.

To close this section, we review some useful properties about $\chi$-spherical functions and their connections
to the Heckman-Opdam hypergeometric functions (cf. Part I, Chapters 4, 5 in \cite{hs}), and make an observation on 
these functions, particularly evaluated at the smallest element in $\Lambda_\chi^+$.

Let $K^d$ denote the noncompact dual of $K$. Then $(K^d, L_0)$ is a symmetric pair of the noncompact type. 
Let $\mathbf{X}=K^d/L_0$.
Let $\mu \in \Lambda_\chi^+$ and $\pi_\mu$ the $\chi$-spherical unitary representation of $K$ with the weight $\mu$.

(1) Consider the case $\K = \R$ with the character $\chi$ on $L$ given by (\ref{eq:chi-nonc}). Since $\chi |_{L_0} = 1$,  
$e_\mu$ is a unit $L_0$-fixed vector and thus $\pi_\mu$ is a $L_0$-spherical representation.
Hence the $\chi$-spherical functions $\psi_\mu$ on $K$ is also a $L_0$-spherical function. The core property is that $\psi_\mu$
are connected to the $\chi$-spherical functions $\varphi_{\lambda_{\g{k}}}$ 
on $K^d$ by holomorphic continuation where $\lambda_{\g{k}} \in \g{b}_{\C}^\ast$ (cf. Section 4 in \cite{hol}), precisely, 
\[\psi_\mu = \varphi_{\mu + \rho_{\g{k}}}\]
where $\rho_{\g{k}}$ is given by (\ref{eq:rhok}). We are interested in $\varphi_{\mu + \rho_{\g{k}}}$ at $\mu = \mu^0$. 
Note that $\mu^0 + \rho_{\g{k}} = 2 \rho_s + \rho_\mathfrak{k}$.

\begin{lemma}
In case $\K = \R$ let $b \in \exp (\mathfrak{b})$ (so $b$ has the form (\ref{eq:B})). We have
\begin{equation}
\label{eq23}
\varphi_{2 \rho_s + \rho_\mathfrak{k}} (b) = \prod_{j=1}^p \cos t_j.
\end{equation}
\end{lemma}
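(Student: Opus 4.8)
The plan is to exploit that $b\in\exp(\g{b})$ lies in the compact group $K=\mathrm{SO}(n+1)$, so that the left-hand side of \eqref{eq23} can be computed as an honest matrix coefficient of a finite-dimensional representation. Since $\mu^0=2\rho_s$ by \eqref{eq32} with $k=1$, we have $2\rho_s+\rho_{\g{k}}=\mu^0+\rho_{\g{k}}$, and by the identification $\psi_\mu=\varphi_{\mu+\rho_{\g{k}}}$ recalled just above, $\varphi_{2\rho_s+\rho_{\g{k}}}=\psi_{\mu^0}$ is the normalized $\chi$-spherical function attached to the irreducible $\chi$-spherical representation $(\pi_{\mu^0},V_{\mu^0})$ of $K$ (so that $\psi_{\mu^0}(e)=1$). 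As $\dim V_{\mu^0}^\chi=1$ with unit generator $e_{\mu^0}$, this function equals the matrix coefficient $\psi_{\mu^0}(k)=(e_{\mu^0},\pi_{\mu^0}(k)e_{\mu^0})$. So the calculation reduces to three tasks: realize $\pi_{\mu^0}$ concretely, pin down the $\chi$-line $V_{\mu^0}^\chi$ inside it, and evaluate one matrix coefficient on the explicit matrix \eqref{eq:B}.

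For the first two tasks I would identify $\pi_{\mu^0}$ with the exterior power $\Lambda^pV$ of the standard representation $V=\C^{n+1}$ of $\mathrm{SO}(n+1)$ (for $\K=\R$ only $p=2$ occurs, and then $n+1=q+2\ge5>2p$, so $\Lambda^pV$ is irreducible). Fix the standard orthonormal basis $v_1,\dots,v_{n+1}$ and split $V=U\oplus W$ with $U=\mathrm{span}(v_1,\dots,v_p)$, $W=\mathrm{span}(v_{p+1},\dots,v_{n+1})$, on which $L=\mathrm{S}(\mathrm{O}(p)\times\mathrm{O}(q))$ acts through $\mathrm{O}(p)$ and $\mathrm{O}(q)$ respectively. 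In the $L$-decomposition $\Lambda^pV=\bigoplus_{a+b=p}\Lambda^aU\otimes\Lambda^bW$, for $0<a<p$ the factor $\Lambda^aU$ is not a character of $\mathrm{O}(p)$, and since $p<q$ for the real entries of Table \ref{t1} the factor $\Lambda^pW$ is not a character of $\mathrm{O}(q)$; hence the only line on which $L$ acts by a character is $\Lambda^pU=\C\,v_1\wedge\cdots\wedge v_p$, where $L$ acts by $(A,B)\mapsto\det A$, which is exactly $\chi$ of \eqref{eq:chi-nonc}. Thus $(\Lambda^pV)^\chi=\C\,v_1\wedge\cdots\wedge v_p$. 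Restricting $\exp(\g{b})$ to $V$, whose $\g{b}$-weights are $\pm\eps_1,\dots,\pm\eps_p$ together with $0$, the $\Sigma^+$-highest $\g{b}$-weight occurring in $\Lambda^pV$ is $\eps_1+\cdots+\eps_p=2\rho_s=\mu^0$; so by injectivity of $\pi\mapsto\mu_\pi$ on $\wh{K}_\chi$, we get $(\pi_{\mu^0},V_{\mu^0})\cong\Lambda^pV$ and may take $e_{\mu^0}=v_1\wedge\cdots\wedge v_p$, a unit vector for the $K$-invariant inner product on $\Lambda^pV$ induced from that of $V$.

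The third task is then immediate: by \eqref{eq:B}, $b=\exp Y(\mathbf{t})$ acts on $V$ by $bv_j=\cos t_j\,v_j+\sin t_j\,v_{q+j}$ for $1\le j\le p$ (with $v_{q+j}\in W$), whence
\[
\pi_{\mu^0}(b)(v_1\wedge\cdots\wedge v_p)=(bv_1)\wedge\cdots\wedge(bv_p)=\Bigl(\prod_{j=1}^p\cos t_j\Bigr)v_1\wedge\cdots\wedge v_p+R,
\]
with every wedge monomial in $R$ containing at least one $v_{q+j}$ and hence orthogonal to $v_1\wedge\cdots\wedge v_p$. Taking the inner product against $e_{\mu^0}$ gives $\varphi_{2\rho_s+\rho_{\g{k}}}(b)=(e_{\mu^0},\pi_{\mu^0}(b)e_{\mu^0})=\prod_{j=1}^p\cos t_j$, which is \eqref{eq23}.

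I do not expect a serious obstacle here; the care is all in the bookkeeping. The first point to check is that $\psi_\mu=\varphi_{\mu+\rho_{\g{k}}}$ is normalized as the spherical function of $\chi$-type (value $1$ at $e$), so that it really is the matrix coefficient $(e_\mu,\pi_\mu(\cdot)e_\mu)$. The second, and the one most likely to trip one up, is that $e_{\mu^0}$ is the $\chi$-\emph{isotypic} vector $v_1\wedge\cdots\wedge v_p$ and not a $\g{b}$-weight vector of $\Lambda^pV$: indeed $v_1\wedge\cdots\wedge v_p$ is \emph{not} a $\g{b}$-weight vector, which is exactly why the answer is a product of cosines rather than a single exponential $e^{i(t_1+\cdots+t_p)}$. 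A more computational alternative --- writing $\varphi_{2\rho_s+\rho_{\g{k}}}$ as an Iwasawa-type integral over $L_0$ on the noncompact dual $K^d/L_0$ and evaluating it by analytic continuation --- would also work, but it is strictly longer; the representation-theoretic route is short precisely because $\mu^0$ is the highest weight of the ``small'' module $\Lambda^pV$.
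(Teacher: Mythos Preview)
Your argument is correct and takes a genuinely different route from the paper. The paper proves the lemma analytically: it applies the radial Laplace--Beltrami operator $L(m)$ to $\cos t_1\cos t_2$ and to $\varphi_{2\rho_s+\rho_{\mathfrak{k}}}$, checks that both are eigenfunctions with the same eigenvalue $-2q$, and then invokes the uniqueness of normalized spherical functions as joint eigenfunctions of the invariant differential operators on $K^d/L_0$. Your approach, by contrast, is purely representation-theoretic: you realize $\pi_{\mu^0}$ explicitly as $\Lambda^p\C^{n+1}$, locate the $\chi$-line as $\C\,v_1\wedge\cdots\wedge v_p$, and read off the matrix coefficient from the action \eqref{eq:B} of $\exp\mathfrak{b}$ on the standard basis.

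What each buys: your route is entirely elementary linear algebra once the identification $\psi_{\mu^0}=\varphi_{\mu^0+\rho_{\mathfrak{k}}}$ is in hand, and it makes transparent \emph{why} the answer is a product of cosines (the $\chi$-vector is a wedge of standard basis vectors, not a $\mathfrak{b}$-weight vector). The paper's route avoids having to identify the representation or its $\chi$-line concretely, trading that for a differential-operator computation and an appeal to the uniqueness theorem for spherical functions; strictly speaking it checks only the Laplacian, so it implicitly uses that no other $L_0$-spherical function shares this Casimir eigenvalue, whereas your argument has no such hidden multiplicity issue. Both are short; yours is arguably the more self-contained of the two.
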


\begin{proof}
Let $L (m)$ be the radial part of the Laplace-Beltrami operator on $\mathbf{X}$ associated with the root 
system $\Sigma$ and the multiplicity $m$:
\[L (m) = \sum_{j=1}^p \frac{\partial^2}{\partial t_j^2} + 
\sum_{\alpha \in \Sigma^+} m_\alpha \frac{1+e^{-2 \alpha}}{1-e^{-2 \alpha}} \partial_\alpha.\]
Recall Lemma \ref{lem:Sigma} for $\Sigma^+$ and $m_\alpha$. In case $\K = \R$, $d = 1$ and $p = 2$. 
We apply $L (m)$ to the function $\prod_{j=1}^2 \cos t_j$ and get 
\[L (m) (\cos t_1 \cos t_2) = (- 2 q) (\cos t_1 \cos t_2).\]
On the other hand, we apply $L (m)$ to the function $\varphi_{2 \rho_s + \rho_\mathfrak{k}}$:
\begin{eqnarray*}
L (m) \varphi_{2 \rho_s + \rho_\mathfrak{k}} (b) & = & -[\langle 2 \rho_s + \rho_\mathfrak{k},\; 2 \rho_s + \rho_\mathfrak{k}\rangle - 
\langle \rho_\mathfrak{k},\; \rho_\mathfrak{k}\rangle]\; \varphi_{2 \rho_s + \rho_\mathfrak{k}} (b)\\
& = & - \langle 2 \rho_s + 2 \rho_\mathfrak{k},\; 2 \rho_s \rangle \varphi_{2 \rho_s + \rho_\mathfrak{k}} (b).
\end{eqnarray*}
Note that $2 \rho_s \cong (1, 1)$ and $2 \rho_\mathfrak{k} \cong (q, q-2)$. Thus, 
\[- \langle 2 \rho_s + 2 \rho_\mathfrak{k},\; 2 \rho_s \rangle = - \langle (q+1,\; q-1),\; (1, 1) \rangle = -2 q.\]
Hence, 
\[L (m) \varphi_{2 \rho_s + \rho_\mathfrak{k}} = (-2 q) \varphi_{2 \rho_s + \rho_\mathfrak{k}}.\]
Since both $\varphi_{2 \rho_s + \rho_\mathfrak{k}} (b)$ and $\cos (t_1) \cos (t_2)$ share the same eigenvalue, and since 
the spherical functions are the unique normalized solutions of the system of differential equations, originated from the algebra of $K^d$-invariant 
differential operators on $\mathbf{X}$ (cf. \cite[Chapter IV, \S 2]{h2}), we see that 
\[\varphi_{2 \rho_s + \rho_\mathfrak{k}} (b) = \cos (t_1) \cos (t_2).\]
\end{proof}

(2) For the case $\K = \C$ the characters $\chi = \chi_l$ on $L$ are given by (\ref{eq:chi-c}) with $l \in \Z$. 
The $\chi_l$-spherical functions $\psi_{\mu, l}$
on $K$ are connected to the $\chi_l$-spherical functions $\varphi_{\lambda_{\g{k}}, l}$ on $K^d$ by 
holomorphic continuation (also cf. \cite{hol}), 
\[\psi_{\mu, l} = \varphi_{\mu + \rho_{\g{k}}, l}.\]
Let $m = (m_s, m_m, m_l)$ where $m_s, m_m, m_l$ denote the multiplicities of short, medium, and long roots in 
$\Sigma$, respectively. The multiplicity parameters 
\begin{equation}
\label{mpn}
m_\pm (l) = (m_s \mp 2 |l|, m_m, m_l \pm 2 |l|)
\end{equation}
(the $\pm$ signs are both valid) are not necessarily positive. By applying the multiplicities (\ref{mpn}) to (\ref{eq:rhok}) 
and regrouping terms (cf. Proposition 2.10 in \cite{ho}), we get 
\begin{equation}
\label{eq12}
\rho_\mathfrak{k} (m_\pm (l)) = \rho_\mathfrak{k} \pm 2 |l| \rho_s,\qquad \rho_\mathfrak{k} = \rho_\mathfrak{k} (m).
\end{equation}

The $\chi_l$-spherical functions $\varphi_{\lambda_{\g{k}}, l}$ are
linked to the Heckman-Opdam hypergeometric functions $F$ associated to $\Sigma$ and the multiplicity $m_\pm (l)$ 
(cf. Theorem 5.2.2, part I, \cite{hs}), that is, 
\begin{equation}
\label{eq11}
\varphi_{\lambda_\mathfrak{k}, l} |_{\exp (i \g{b})} = \eta_l^\pm\; F (\lambda_\mathfrak{k}, m_\pm (l);\;
\cdot\; ),\;\quad \lambda_\g{k} \in \g{b}_\C^\ast
\end{equation}
where
\[\eta_l^\pm = \prod_{\alpha \in \mathcal{O}_s^+} \left(\frac{e^\alpha + e^{- \alpha}}{2}\right)^{\pm 2 |l|}.\]
Applying $L (m)$ to the hypergeometric function (associated to a multiplicity parameter $m$), we get
$$L (m) F (\lambda_\mathfrak{k}, m;\; \cdot\; ) = (\langle \lambda_\mathfrak{k}, \lambda_\mathfrak{k}\rangle - \langle
\rho_\mathfrak{k} (m), \rho_\mathfrak{k} (m)\rangle) F (\lambda_\mathfrak{k}, m;\; \cdot\; ),\quad
\lambda_\mathfrak{k} \in \mathfrak{b}_{\C}^\ast.$$
Write $m (l) = m_+ (l)$ and $\rho_{\g{k}, l} = \rho_\mathfrak{k} (m (l)) \in \g{b}_\C^\ast$. 
It follows from (\ref{eq32}) and (\ref{eq12}) that 
\[\mu^0 + \rho_{\g{k}} = 2 |l| \rho_s + \rho_{\g{k}} = \rho_{\g{k}, l}.\]

\begin{lemma}
We have 
\begin{equation}
\label{eq13}
\varphi_{\rho_{\g{k}, l},\, l} (x) = \eta_l^+ (x), \quad x \in \exp \g{b}_\C.
\end{equation}
\end{lemma}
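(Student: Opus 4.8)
The plan is to deduce (\ref{eq13}) from the hypergeometric description (\ref{eq11}) together with the identification $\mu^0 + \rho_{\g{k}} = \rho_{\g{k}, l}$ recorded just above the statement, thereby reducing the whole assertion to the single identity $F(\rho_{\g{k}}(m), m;\, \cdot\,) \equiv 1$, which is the Heckman--Opdam counterpart of the classical fact that the Harish-Chandra spherical function with spectral parameter $\rho$ is identically $1$.

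First I would specialize (\ref{eq11}) to the ``$+$'' sign with $\lambda_{\g{k}} = \rho_{\g{k}, l}$ and use $\psi_{\mu^0, l} = \varphi_{\mu^0 + \rho_{\g{k}}, l} = \varphi_{\rho_{\g{k}, l}, l}$ to obtain
\[
\varphi_{\rho_{\g{k}, l},\, l}\big|_{\exp(i\g{b})} = \eta_l^+\, F(\rho_{\g{k}, l},\, m_+(l);\, \cdot\,).
\]
Since by construction $\rho_{\g{k}, l} = \rho_{\g{k}}(m(l)) = \rho_{\g{k}}(m_+(l))$, it remains to prove that $F(\rho_{\g{k}}(m_+(l)), m_+(l);\, \cdot\,) \equiv 1$.

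For that I would argue exactly as in the proof of (\ref{eq23}). The constant function $1$ is $W$-invariant, analytic at the origin and normalized to take the value $1$ there; one has $L(m_+(l))\,1 = 0$, and $0$ is also the $L(m_+(l))$-eigenvalue of $F(\rho_{\g{k}}(m_+(l)), m_+(l);\, \cdot\,)$ because $\langle \rho_{\g{k}}(m_+(l)), \rho_{\g{k}}(m_+(l))\rangle - \langle \rho_{\g{k}}(m_+(l)), \rho_{\g{k}}(m_+(l))\rangle = 0$; more generally the two functions are joint eigenfunctions with the same eigenvalues for the whole commuting family of Heckman--Opdam differential operators attached to $(\Sigma, m_+(l))$. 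By the uniqueness of the normalized $W$-invariant analytic solution of the hypergeometric system (cf.\ \cite{hs} and the argument used for (\ref{eq23})), the two functions coincide. Feeding this back into the displayed equality gives $\varphi_{\rho_{\g{k}, l}, l} = \eta_l^+$ on $\exp(i\g{b})$; since both sides extend holomorphically to $\exp\g{b}_\C$ and $\exp(i\g{b})$ is a totally real submanifold there, the identity theorem propagates the equality to all of $\exp\g{b}_\C$, which is (\ref{eq13}).

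The main obstacle I anticipate is making $F(\rho_{\g{k}}(m), m;\, \cdot\,) \equiv 1$ fully legitimate for the twisted multiplicity $m_+(l) = (m_s - 2|l|,\, m_m,\, m_l + 2|l|)$, which need not be positive: one must check that $\rho_{\g{k}, l}$ and $m_+(l)$ avoid the singular hyperplanes of the Heckman--Opdam function (so that $F$ and the cited uniqueness statement really are available at that parameter) and that the constant function solves the entire hypergeometric system there, not merely the scalar equation $L(m_+(l))\,u = 0$. If one prefers to sidestep this, the alternative is to verify directly in $\eps_j$-coordinates that $\eta_l^+ = \prod_{j} (\cos t_j)^{2|l|}$ is the normalized solution, but in the present higher-rank setting that comes down to essentially the same argument.
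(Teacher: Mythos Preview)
Your proposal is correct and follows essentially the same route as the paper: specialize (\ref{eq11}) at $\lambda_{\g{k}}=\rho_{\g{k},l}$, use the eigenvalue identity $L(m(l))\,F(\rho_{\g{k},l},m(l);\cdot)=0$ together with the normalization at $e$ and the uniqueness of the hypergeometric solution to conclude $F\equiv 1$, and then propagate from $\exp(i\g{b})$ to $\exp\g{b}_\C$ by holomorphic continuation off a totally real submanifold. Your version is in fact slightly more careful than the paper's in invoking the full hypergeometric system (rather than only $L(m(l))$) for the uniqueness step and in flagging the need to check that $m_+(l)$ lies in the regular parameter set for $F$.
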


\begin{proof}
Note that 
\[L (m (l))\; F (\rho_{\g{k}, l}, m (l);\; \cdot\; ) = (\ip{\rho_{\g{k}, l}}{\rho_{\g{k}, l}} - \ip{\rho_{\g{k}, l}}
{\rho_{\g{k}, l}})\; F (\rho_{\g{k}, l}, m (l);\; \cdot\; ) = 0\]
which implies that $F (\rho_{\g{k}, l}, m (l);\; \cdot\; )$ equals some constant function. But since we normalize
$\varphi_{\lambda_\mathfrak{k}, l} (e) = 1$, it follows from (\ref{eq11}) that $F (\rho_{\g{k}, l}, m (l);\; \cdot\; ) = 1$. 
Hence (\ref{eq13}) follows for $x \in \exp i \g{b}$. Because 
$\exp i \g{b}$ is a totally real form of $\exp \g{b}_\C$, (\ref{eq13}) holds for $x \in \g{b}_\C$.
\end{proof}

The formulas (\ref{eq23}) and (\ref{eq13}) are of crucial importance 
when we compute an initial (in particular, the smallest) term in the spectrum of $J (\lambda)$ in Theorem \ref{thm3}.

\section{The $K$-spectrum of the $\ct$ transform}
\label{spectrum}

\noindent
In this section we will introduce the $\ct$ transform on smooth sections of nontrivial line
bundles over $\gp (\K) = K/L$ (for $\K = \R$ or $\C$ as we discussed in Section \ref{grassmann})  
and prove that it coincides with the intertwining operator $J (\lambda)$. 
Then we apply the spectrum generating method outlined in Section \ref{sgo} to determine the $K$-spectrum of the $\ct$ transform.

Let $x, y \in \gp (\K)$. We can view $x$ as a $d p$-dimensional real vector space. Let $E \subset x$ be a
convex subset containing the origin and $\mathrm{Vol}_\R (E) = 1$ (the volume). 
Let $\mathrm{pr}_y: \K^n \to y$ be the orthogonal projection onto $y$. Define
$$|\mathrm{Cos} (x, y)| := \mathrm{Vol}_\R (\mathrm{pr}_y (E))^{1/d}.$$
The definition of $\mathrm{Cos}$ is independent of the choice of $E$. Moreover,
\[|\mathrm{Cos} (x, y)| = |\mathrm{Cos} (y, x)| \geq 0.\]
Refer to Section 3.2 in \cite{opr} for more information about this $\mathrm{Cos}$-function. We identify $\g{a}_{\C}^\ast$ with $\C$ by 
$$\lambda \longmapsto \frac{n+1}{p q} \lambda (H_0)\qquad \text{with the inverse}\qquad z \longmapsto z \frac{p q}{n+1} \alpha.$$
Therefore, 
\[\rho \cong \frac{d (n+1)}{2} \in \R.\]
Theorem 4.2 and Corollary 4.4 in \cite{op} give the following important result:

\begin{theorem}
\label{thm2}
For $\lambda \in \C$, $x = k \cdot b_0 \in \cB$, and $y = h \cdot b_0 \in \cB$, we have
$$\alpha (k^{-1} h)^\lambda = |\mathrm{Cos} (x, y)|^\lambda,\quad k, h \in K.$$
\end{theorem}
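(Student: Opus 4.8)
The plan is to reduce the identity $\alpha(k^{-1}h)^\lambda = |\mathrm{Cos}(x,y)|^\lambda$ to a purely linear-algebraic statement about the projection of a unit-volume convex body in $x$ onto $y$, and then to compute $\alpha(k^{-1}h)$ explicitly in the matrix picture of $G=\SL(n+1,\K)$ supplied in Section \ref{grassmann}. Since both sides are positive reals raised to the power $\lambda$, it suffices to prove the case $\lambda=1$, i.e.\ $\alpha(k^{-1}h) = |\mathrm{Cos}(x,y)|$; the general case then follows by taking $\lambda$-th powers, with the usual caveat that $\alpha(\cdot)>0$ so no branch ambiguity arises.

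First I would recall, from the cited Theorem 4.2 and Corollary 4.4 of \cite{op} (which the present statement explicitly invokes), that the scalar $\alpha(g)$ attached to $g\in\overline{N}MAN$ via the Bruhat-type decomposition $g=\overline n_P(g)m_P(g)\alpha_P(g)n_P(g)$ is computed from the top-left $p\times p$ block: writing $g$ as in \eqref{eq26} with $X\in\GL(p,\K)$, one has the block factorization
\[
\begin{pmatrix} X & Y\\ V & W\end{pmatrix}
=
\begin{pmatrix} I & 0\\ VX^{-1} & I\end{pmatrix}
\begin{pmatrix} X & 0\\ 0 & W-VX^{-1}Y\end{pmatrix}
\begin{pmatrix} I & X^{-1}Y\\ 0 & I\end{pmatrix},
\]
so that the $MA$-component is the block-diagonal matrix with blocks $X$ and $W-VX^{-1}Y$, and the $A$-part (after projecting out the $M$-part, i.e.\ the unitary factor in the polar decompositions of the two blocks) is governed by $|\det X|$. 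Under the identification of $\g a_\C^\ast$ with $\C$ fixed just before the statement, this yields $\alpha(g) = |\det X|^{\,(n+1)/(pq)\cdot(\text{normalization})}$; I would pin down the exact exponent by checking it against $\rho\cong d(n+1)/2$ and the requirement that \eqref{mea:nbar} hold, exactly as in \cite{op}. Concretely, with $g=k^{-1}h$, the block $X$ is the $p\times p$ matrix $(k^{-1}h)_{ij}$, $1\le i,j\le p$, which represents the composition $b_0 \hookrightarrow \K^{n+1} \xrightarrow{k^{-1}h} \K^{n+1} \twoheadrightarrow b_0$ in the standard basis $e_1,\dots,e_p$ of $b_0$.

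Next I would translate the $\mathrm{Cos}$-side into the same language. With $x=k\cdot b_0$ and $y=h\cdot b_0$, the orthogonal projection $\mathrm{pr}_y: \K^{n+1}\to y$ and the map $k$ identifying $b_0\xrightarrow{\ \sim\ }x$ show that $\mathrm{pr}_y\circ k|_{b_0}: b_0\to y \xrightarrow{h^{-1}} b_0$ has matrix (in the bases $ke_1,\dots,ke_p$ of $x$ and $he_1,\dots,he_p$ of $y$, which are orthonormal since $k,h\in K$) equal to the $p\times p$ upper-left block of $h^{-1}k$, i.e.\ of $(k^{-1}h)^{-1}$ up to the $K$-unitarity that makes $h^{\ast}=h^{-1}$. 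The quantity $\mathrm{Vol}_\R(\mathrm{pr}_y(E))$ for a unit-volume convex $E\subset x$ equals the absolute value of the Jacobian of this linear map viewed as a real-linear map of a $dp$-dimensional space, which is $|\det{}_\K X|^{d}$ where $\det_\K$ is the $\K$-determinant of the $p\times p$ block and the exponent $d=\dim_\R\K$ comes from $\det{}_\R = |\det{}_\C|^2$ when $\K=\C$ (trivially when $\K=\R$). Taking the $1/d$-th power in the definition of $|\mathrm{Cos}(x,y)|$ exactly cancels this $d$, giving $|\mathrm{Cos}(x,y)| = |\det{}_\K X|$, matching $\alpha(k^{-1}h)$ up to the same normalizing exponent. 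A brief check that the block of $k^{-1}h$ and the block of $h^{-1}k$ have $\K$-determinants of equal modulus (they are related by $g\mapsto g^{\ast}$, which on the relevant block is transpose-conjugate, preserving $|\det|$) closes the gap, as does the symmetry remark $|\mathrm{Cos}(x,y)|=|\mathrm{Cos}(y,x)|$ already recorded in the text.

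The main obstacle, as usual in this circle of ideas, is purely bookkeeping: getting the normalization of the bilinear form \eqref{eq:invform}, the identification $\g a_\C^\ast\cong\C$, and the exponent in $a(\overline n)^{-2\rho}$ all mutually consistent so that the naked equality $\alpha(k^{-1}h)=|\mathrm{Cos}(x,y)|$ (rather than some power of it) comes out. I would handle this not by recomputing from scratch but by citing Theorem 4.2 and Corollary 4.4 of \cite{op} for the function-case identity $a(k^{-1}h)^\lambda = |\mathrm{Cos}(x,y)|^\lambda$ and then observing that passing to line bundles changes nothing on either side: $\alpha(\cdot)$ and the $\mathrm{Cos}$-function are both defined without reference to $\chi$, so the present Theorem \ref{thm2} is literally the same statement as in \cite{op}, merely re-used in the line-bundle context. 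The only genuinely new remark to make is that this is legitimate, i.e.\ that the $A$-component $\alpha_P$ here coincides with the one in \cite{op}; this is immediate from the coincidence of the parabolic $P=MAN$ and of $\g a$ in both setups.
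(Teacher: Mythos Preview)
The paper does not actually prove this theorem; the sentence immediately preceding the statement reads ``Theorem 4.2 and Corollary 4.4 in \cite{op} give the following important result,'' and no further argument is supplied. Your proposal ultimately lands in the same place---cite \cite{op} and observe that neither $\alpha(\cdot)$ nor $|\mathrm{Cos}(\cdot,\cdot)|$ depends on the character $\chi$, so the identity carries over verbatim from the trivial-bundle setting---so it agrees with the paper's treatment. The block-factorization and Jacobian sketch you give is a plausible outline of what happens inside \cite{op}, but it is surplus to requirements here; the paper is content with the bare citation.
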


We use the convention that 
\[\mathrm{Cos} (k, h) := \mathrm{Cos} (k \cdot b_0,\; h \cdot b_0).\] 
For $\re \lambda > \rho$, we define the
$\ct$ transform $\cl: C^\infty (\cB; \cL_\chi) \to C^\infty (\cB; \cL_\chi)$ by 
\begin{equation}
\label{eq:ct}
\cl f (k) = \int_K |\mathrm{Cos} (k, h)|^{\lambda - \rho} \chi (m (k^{-1} h)) f (h)\, d h.
\end{equation}
This is a higher-rank analogue of (\ref{eq:cos2}). If $p=1$ and $\chi$ is a trivial character, then (\ref{eq:ct}) 
matches (\ref{eq:cos2}) with a $\rho$-shift in the power $\lambda$.

Theorem \ref{thm2} implies that $J (\lambda)$ coincides with $\mathcal{C}^\lambda$ immediately. Also, the proof of Theorem
\ref{thm1} shows that
$\cl$ is an intertwining operator. Furthermore, the $\ct$ transform (\ref{eq:ct}) extends to a meromorphic family of intertwining 
operators $\cl$ for $\lambda \in \g{a}_\C^\ast$.

Next we will determine the $K$-spectrum of $J (\lambda)$ for $\cB = \gp (\K)$, which determines the
$K$-spectrum of the $\ct$ transform (\ref{eq:ct}).

Recall (\ref{LamPlus1}) and (\ref{LamPlus}) for the description of $\Lambda_\chi^+$. Then accordingly 
there is a realization of the set $S (\mu)$ (defined in Section \ref{sgo}) for $\mu \in \Lambda_\chi^+$, and 
hence (\ref{eq10}) yields an explicit form (\ref{recur}). 
The proofs are similar to those of Lemma 5.4 and Lemma 5.5 in \cite{op} with 
$\Lambda^+ (\cB)$ replaced by $\Lambda_\chi^+$.

\begin{lemma}
Let $\mu = (\mu_1, \dotsc, \mu_p) \in \Lambda_\chi^+$. Then 
$$S (\mu) = (\{\mu \pm 2 \eps_j\; \mid\; j = 1, \dotsc, p\} \cup \{\mu\}) \cap \Lambda_\chi^+.$$
These representations occur with multiplicity one.
\end{lemma}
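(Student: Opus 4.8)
The plan is to decompose the tensor product $L^2_\mu(\cB;\cL_\chi)\otimes\g{s}_\C$ as a $K$-module and read off $S(\mu)$ from the highest weights that appear, using the multiplicity-one description of $\widehat{K}_\chi$ in terms of $\Lambda_\chi^+$. First I would identify the $K$-module structure of $\g{s}_\C$: since $\g{g}=\g{k}\oplus\g{s}$ with $\g{g}=\SL(n+1,\K)$ and $\g{k}$ the compact form, $\g{s}_\C$ is the (complexified) Cartan complement, and restricted to the relevant root system $\Sigma$ on $\g{b}$ it behaves like the space of traceless symmetric matrices (or its $\K$-analogue). The key point is that $M(\ip{X}{\Ad(\cdot)H_0})$, as $X$ ranges over $\g{s}_\C$, is multiplication by the matrix coefficients of $\Ad(\cdot)H_0$, and the span of these coefficients as a $K$-module is exactly the copy of $\g{s}_\C$ sitting inside $C^\infty(\cB)$. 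So $\im\Phi_\mu$ is a quotient of $L^2_\mu(\cB;\cL_\chi)\otimes\g{s}_\C$, and $S(\mu)$ is the set of $\nu\in\Lambda_\chi^+$ that actually survive in the image.

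Next I would run the weight bookkeeping. The dominant weights occurring in $\g{s}_\C$ (with respect to $\Sigma^+=\{\eps_i,\ \eps_i\pm\eps_j,\ 2\eps_i\}$ from the excerpt) are $\{\pm2\eps_j\}$ together with $0$ — this is precisely why the candidate set is $\{\mu\pm2\eps_j\}\cup\{\mu\}$. Then the Cartan/PRV-type estimate says every irreducible constituent of $\pi_\mu\otimes(\g{s}_\C)$ has highest weight of the form $\mu+\gamma$ with $\gamma$ a weight of $\g{s}_\C$, and the extreme ones $\mu\pm2\eps_j$ do occur with multiplicity one when they are dominant; the remaining constituents all coincide with $\mu$ itself. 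Intersecting with $\Lambda_\chi^+$ (using (\ref{LamPlus1}) or (\ref{LamPlus})) throws away those $\mu\pm2\eps_j$ that violate the interlacing/parity conditions, which is why the statement intersects with $\Lambda_\chi^+$. Since each such $\nu$ occurs in $L^2(\cB;\cL_\chi)$ with multiplicity one (a standing assumption in Section \ref{sgo}, and verified for the Grassmannians in Section \ref{grassmann}), the occurrence of $L^2_\nu$ inside $\im\Phi_\mu$ is likewise multiplicity one. This is exactly the argument of Lemma 5.4 in \cite{op}, with $\Lambda^+(\cB)$ replaced by $\Lambda_\chi^+$; the character $\chi$ enters only through the shift in $\Lambda_\chi^+$ and otherwise plays no role, because $\g{s}_\C$ is a genuine $K$-module (not a line bundle twist).

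The main obstacle, and the only place where real work is needed, is showing that the extreme weights $\mu\pm2\eps_j$ (when they lie in $\Lambda_\chi^+$) are genuinely \emph{nonzero} in the image $\im\Phi_\mu$ — i.e. that the projection $\mathrm{pr}_\nu\circ\Phi_\mu$ does not vanish — and that no $\nu$ of the form $\mu\pm\eps_i\pm\eps_j$ or $\mu\pm\eps_i$ creeps in. The first is a non-degeneracy statement about the matrix coefficient map $X\mapsto M(\ip{X}{\Ad(\cdot)H_0})$: one must check that its component landing in the $\nu$-isotypic piece is not identically zero, which follows because $e_\mu$ is a cyclic $\chi$-extreme vector and the relevant lowering/raising operators in $\g{s}_\C$ move $e_\mu$ off the $\chi$-weight line in a controlled way. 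The second is automatic once one notes $\mu\pm\eps_i$ and $\mu\pm\eps_i\pm\eps_j$ fail the parity constraints defining $\Lambda_\chi^+$ in both the $\K=\R$ and $\K=\C$ cases (odd coordinates, resp. coordinates congruent mod $2$), so even if they appeared abstractly in $\pi_\mu\otimes\g{s}_\C$ they are not $\chi$-spherical and contribute nothing to $\im\Phi_\mu$. I would therefore organize the proof as: (i) describe $\g{s}_\C$ as a $K$-module and its weights; (ii) reduce $S(\mu)$ to dominant elements of $\{\mu+\text{wt}(\g{s}_\C)\}\cap\Lambda_\chi^+$; (iii) eliminate all candidates except $\mu$ and $\mu\pm2\eps_j$ by the $\Lambda_\chi^+$ constraints; (iv) prove non-vanishing of the $(\mu\pm2\eps_j)$-components via the explicit action on $e_\mu$; and (v) invoke multiplicity one for the final assertion.
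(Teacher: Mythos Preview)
Your proposal is correct and takes the same approach as the paper. Indeed the paper gives no standalone proof: it simply remarks that the argument is that of Lemma~5.4 in \cite{op} with $\Lambda^+(\cB)$ replaced by $\Lambda_\chi^+$, which is precisely what you have identified and fleshed out.
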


\begin{lemma}
Let $\mu = (\mu_1, \dotsc, \mu_p) \in \Lambda_\chi^+$ and $\lambda \in \C \cong \g{a}_{\C}^\ast$. Then
\begin{equation}
\label{recur}
\frac{\eta_{\mu + 2 \eps_j} (\lambda)}{\eta_\mu (\lambda)} = \frac{\lambda - \mu_j - \rho + d (j - 1)}{\lambda +
\mu_j + \rho - d (j - 1)}.
\end{equation}
\end{lemma}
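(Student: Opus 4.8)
The plan is to specialize the master recursion (\ref{eq10}) to $\nu = \mu + 2\eps_j$ and then make the spectral data explicit. By the preceding lemma, whenever $\mu + 2\eps_j \in \Lambda_\chi^+$ we have $\mu + 2\eps_j \in S(\mu)$, and the $K$-type $L_{\mu+2\eps_j}^2(\cB;\cL_\chi)$ occurs in $\im \Phi_\mu$ with multiplicity one; hence one may pick $Y \in \g{s}_\C$ with $\omega_{\nu\mu}(Y)\ne 0$, so the hypotheses behind (\ref{eq10}) are met and, writing $\lambda = r\alpha$,
\[
\frac{\eta_{\mu+2\eps_j}(\lambda)}{\eta_\mu(\lambda)} = \frac{2r - \big(\omega(\mu+2\eps_j) - \omega(\mu)\big)}{2r + \big(\omega(\mu+2\eps_j) - \omega(\mu)\big)}.
\]
(If $\mu + 2\eps_j \notin \Lambda_\chi^+$ there is nothing to prove, since $\eta_{\mu+2\eps_j}$ is then not defined.) Everything thus reduces to evaluating $\omega(\mu+2\eps_j) - \omega(\mu)$ and to expressing $r$ through the complex coordinate $\lambda$.

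For the first task I would expand $\omega(\nu) = \ip{\nu + 2\rho_{\g{k}}}{\nu}$ by bilinearity, which gives
\[
\omega(\mu+2\eps_j) - \omega(\mu) = 4\ip{\mu}{\eps_j} + 4\ip{\eps_j}{\eps_j} + 4\ip{\rho_{\g{k}}}{\eps_j}.
\]
Then I would insert the explicit data of Section \ref{grassmann}: the $\eps_1,\dots,\eps_p$ are mutually orthogonal with a common squared length $c = \|\eps_j\|^2$ in the normalization (\ref{eq:invform}), so $\ip{\mu}{\eps_j} = \mu_j c$; and, reading the $\eps_j$-coefficient of $\rho_{\g{k}} = \tfrac12 \sum_{\alpha \in \Sigma^+} m_\alpha \alpha$ off the root system and multiplicities of Lemma \ref{lem:Sigma}, one obtains $\ip{\rho_{\g{k}}}{\eps_j} = (\rho - d(j-1) - 1)\,c$, using $\rho \cong \tfrac{d(n+1)}{2}$. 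Substituting yields
\[
\omega(\mu+2\eps_j) - \omega(\mu) = 4c\,\big(\mu_j + \rho - d(j-1)\big).
\]

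For the second task, under the identification $\g{a}_\C^* \cong \C$ fixed in Section \ref{spectrum} one has $\lambda = r\alpha \leftrightarrow \tfrac{n+1}{pq}\,r$, and the same normalization forces $c = \tfrac{pq}{2(n+1)}$, so in the complex coordinate $2r = 4c\lambda$. Feeding both expressions into the displayed ratio, the common factor $4c$ cancels between numerator and denominator, leaving exactly
\[
\frac{\eta_{\mu+2\eps_j}(\lambda)}{\eta_\mu(\lambda)} = \frac{\lambda - \mu_j - \rho + d(j-1)}{\lambda + \mu_j + \rho - d(j-1)},
\]
which is (\ref{recur}). The only real work is the constant bookkeeping in the middle step: pinning down $c = \|\eps_j\|^2$ and the proportionality between $r$ and $\lambda$ so that the scalar $4c$ produced by $\omega(\mu+2\eps_j) - \omega(\mu)$ is precisely the one relating $2r$ to $\lambda$ and genuinely cancels. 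Since none of the ingredients $\omega(\nu) - \omega(\mu)$, $c$, or the $\g{a}_\C^* \cong \C$ normalization depends on $\chi$, this is the computation carried out in \cite[Lemmas 5.4 and 5.5]{op}, with $\Lambda^+(\cB)$ replaced throughout by $\Lambda_\chi^+$.
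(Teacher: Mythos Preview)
Your proposal is correct and follows the same approach as the paper: both specialize the master recursion (\ref{eq10}) and defer the explicit constant bookkeeping to \cite[Lemmas~5.4 and~5.5]{op}, noting only that $\Lambda^+(\cB)$ is replaced by $\Lambda_\chi^+$. In fact you give considerably more detail than the paper does---the paper's proof is a one-line reference to \cite{op}, whereas you actually sketch the computation of $\omega(\mu+2\eps_j)-\omega(\mu)$ and the normalization matching $2r$ to $4c\lambda$.
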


The formula (\ref{recur}) is the reason for the term spectrum generating, and ensues a recursive calculation of
the spectral functions $\eta_\mu$.
We use (\ref{recur}) as an induction to determine all $\eta_\mu (\lambda)$ starting from the smallest one. 
We will see that $\eta_\mu (\lambda)$ are given by rational functions of Siegel $\Gamma$-functions.

Let us introduce the Siegel (or Gindikin) $\Gamma$-function $\Gamma_{p, d}: \C^p \to \C$ by 
$$\Gamma_{p, d} (\lambda) = \prod_{j = 1}^p \Gamma \left(\lambda_j - \frac{d}{2} (j - 1)\right).$$
Then $\Gamma_{p, d} (\lambda)$ is meromorphic with singularities on the union of the hyperplanes 
\[\{\lambda \in \C^p \mid \lambda_j = - n + \frac{d}{2} (j - 1)\},\]
where $j = 1, \dotsc, p$ and $n \in \Z^+$. If $\lambda \in \C$ we identify 
$\lambda \mapsto (\lambda, \dotsc, \lambda) \in \C^p$.

\begin{theorem}
Let $\lambda \in \C$. Then there is a meromorphic function $G (\lambda)$ such that for all $\mu \in \Lambda_\chi^+$ we
have 
\begin{equation}
\label{eq21}
\eta_\mu (\lambda) = (-1)^{|\mu|/2} G (\lambda) \frac{\Gamma_{p, d} (\frac{1}{2} (- \lambda + \rho
+ \mu))}{\Gamma_{p, d} (\frac{1}{2} (\lambda + \rho + \mu))}
\end{equation}
where $|\mu| = \sum_{j=1}^p \mu_j$.
\end{theorem}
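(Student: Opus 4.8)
The plan is to prove the closed form \eqref{eq21} by induction on $|\mu|$, using the recursion \eqref{recur} to climb from the smallest weight $\mu^0 = 2k\rho_s = k(\eps_1+\cdots+\eps_p)$ (with $k=1$ if $\K=\R$ and $k=|l|$ if $\K=\C$) to an arbitrary $\mu\in\Lambda_\chi^+$. First I would verify that the right-hand side of \eqref{eq21}, for a suitable meromorphic $G(\lambda)$, satisfies the same recursion \eqref{recur}. Concretely, if one sets $F_\mu(\lambda) := (-1)^{|\mu|/2}\,\Gamma_{p,d}\!\bigl(\tfrac12(-\lambda+\rho+\mu)\bigr)/\Gamma_{p,d}\!\bigl(\tfrac12(\lambda+\rho+\mu)\bigr)$, then replacing $\mu$ by $\mu+2\eps_j$ changes only the $j$-th factor of each Siegel $\Gamma$-function: the numerator picks up $\Gamma\bigl(\tfrac12(-\lambda+\rho+\mu_j+2) - \tfrac{d}{2}(j-1)\bigr) = \bigl(\tfrac12(-\lambda+\rho+\mu_j) - \tfrac{d}{2}(j-1)\bigr)\Gamma\bigl(\tfrac12(-\lambda+\rho+\mu_j)-\tfrac{d}{2}(j-1)\bigr)$ via $\Gamma(z+1)=z\Gamma(z)$, and similarly the denominator gains the factor $\tfrac12(\lambda+\rho+\mu_j)-\tfrac{d}{2}(j-1)$, while the extra sign is $(-1)^{(|\mu|+2)/2-|\mu|/2}=1$. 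Taking the ratio $F_{\mu+2\eps_j}/F_\mu$ gives exactly $\dfrac{-\lambda+\rho+\mu_j - d(j-1)}{\lambda+\rho+\mu_j - d(j-1)} = \dfrac{\lambda-\mu_j-\rho+d(j-1)}{\lambda+\mu_j+\rho-d(j-1)}$ after cancelling a factor of $2$ and a sign, matching \eqref{recur}.

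Since both $\eta_\mu(\lambda)$ and $F_\mu(\lambda)$ obey the same multiplicative recursion along the moves $\mu\mapsto\mu+2\eps_j$, their ratio $\eta_\mu(\lambda)/F_\mu(\lambda)$ is invariant under these moves. The next step is to check that the moves $\mu\mapsto\mu+2\eps_j$, $j=1,\dots,p$, together with the base point $\mu^0$, generate all of $\Lambda_\chi^+$ — this is immediate from the descriptions \eqref{LamPlus1} and \eqref{LamPlus}, since any admissible $\mu$ differs from $\mu^0$ by a nonnegative integer combination of the $2\eps_j$ (one must order the increments so that the partial sums stay inside $\Lambda_\chi^+$, which is possible because the defining inequalities $\mu_1\ge\cdots\ge\mu_p$, or $\mu_i-\mu_j\in 2\Z^+$, are preserved by increasing the coordinates in the right order). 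It follows that $\eta_\mu(\lambda)/F_\mu(\lambda)$ is a single meromorphic function of $\lambda$ independent of $\mu$; call it $G(\lambda)$, and set $G(\lambda) := \eta_{\mu^0}(\lambda)/F_{\mu^0}(\lambda)$. This establishes \eqref{eq21} with $G$ as claimed, granting that the base eigenvalue $\eta_{\mu^0}(\lambda)$ is itself meromorphic — which follows from the meromorphy of $\lambda\mapsto J(\lambda)f$ noted after \eqref{eq:infty}.

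The one genuinely non-formal ingredient is the consistency of the recursion: a priori, \eqref{eq10} (hence \eqref{recur}) only tells us the ratio $\eta_\nu/\eta_\mu$ when $\nu\in S(\mu)$, and there may be several distinct paths in $\Lambda_\chi^+$ from $\mu^0$ to a given $\mu$; one must know that \eqref{recur} assigns the same value of $\eta_\mu$ along all of them. I would handle this by observing that the closed form $F_\mu$ exhibited above \emph{is} a consistent solution — it is manifestly path-independent because it is defined directly in terms of $\mu$ — and that Theorem~\ref{thm2}-style reasoning already guarantees a well-defined $\eta_\mu$ for each $\mu$ (the $\eta_\mu(\lambda)$ exist as the genuine eigenvalues of $J(\lambda)$ on $L^2_\mu(\cB;\cL_\chi)$ by \eqref{eq8}, before any recursion is invoked). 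Thus the recursion is automatically consistent, and comparing with $F_\mu$ pins down $\eta_\mu$. I expect the main obstacle to be purely bookkeeping: carefully matching the shift conventions (the $\rho$-shift, the factor $\tfrac12$, the sign $(-1)^{|\mu|/2}$, and the normalization $\rho\cong d(n+1)/2$) so that the Gamma-function recursion comes out exactly as \eqref{recur} rather than off by a constant or a sign; the actual representation-theoretic content has already been extracted in Sections \ref{sgo} and \ref{grassmann}.
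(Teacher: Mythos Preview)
Your approach is correct and essentially identical to the paper's (whose proof reads, in full, ``The proof uses the recursive relation \eqref{recur} and the property of Gamma function: $\Gamma(z+1)=z\Gamma(z)$''); you have simply supplied the details the paper omits, including the connectivity of $\Lambda_\chi^+$ under the moves $\mu\mapsto\mu+2\eps_j$ and the path-independence argument. One small slip: $(-1)^{(|\mu|+2)/2-|\mu|/2}=(-1)^1=-1$, not $1$, but you then correctly absorb this sign when rewriting $\dfrac{-\lambda+\rho+\mu_j-d(j-1)}{\lambda+\rho+\mu_j-d(j-1)}$ as $\dfrac{\lambda-\mu_j-\rho+d(j-1)}{\lambda+\mu_j+\rho-d(j-1)}$, so the computation stands.
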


\begin{proof}
The proof uses the recursive relation (\ref{recur}) and the property of Gamma function: $\Gamma (z+1) = z \Gamma (z)$.
\end{proof}

The function $G (\lambda)$ will be determined in (\ref{eq29}). For that, let us first compute an initial term in the spectrum 
of $J (\lambda)$.

\begin{theorem}
\label{thm3}
Let $\mu^0 \in \Lambda_\chi^+$ be as in (\ref{eq32}). We have
\begin{equation}
\label{eq19}
\eta_{\mu^0} (\lambda) = \frac{\Gamma_{p, d} (d (n+1) / 2)}{\Gamma_{p, d} (d p / 2)} \frac{\Gamma_{p, d}
(\frac{1}{2} (\lambda - \rho + k + d p))}{\Gamma_{p, d} (\frac{1}{2} (\lambda + \rho  + k))}.
\end{equation}
\end{theorem}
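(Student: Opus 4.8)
The plan is to evaluate $\eta_{\mu^0}(\lambda)$ directly from (\ref{eq7}). Write $\varphi:=\psi_{\mu^0}$ for the $\chi$-spherical function of type $\mu^0$ normalized by $\varphi(e)=1$; since $J_{\mu^0}(\lambda)$ acts by the scalar $\eta_{\mu^0}(\lambda)$, the $K$-picture formula (\ref{eq3}) gives
\begin{equation*}
\eta_{\mu^0}(\lambda)=J(\lambda)\varphi(e)=\int_K \alpha(h)^{\lambda-\rho}\,\chi(m(h))\,\varphi(h)\,dh .
\end{equation*}
First I would transport this integral to the torus by means of the Cartan decomposition $K=L(\exp\g{b})L$ and formula (\ref{eq18}). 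By (\ref{eq22}) the factor $\alpha(\cdot)^{\lambda-\rho}$ is bi-$L$-invariant and $\chi(m(h_1 b h_2))=\chi(h_1)\chi(m(b))\chi(h_2)$, while $\varphi$, being a normalized matrix coefficient against the unit vector $e_{\mu^0}\in V_{\mu^0}^\chi$, satisfies $\varphi(h_1 b h_2)=\chi(h_1)^{-1}\chi(h_2)^{-1}\varphi(b)$ for $h_1,h_2\in L$. Hence the whole integrand $\alpha(\cdot)^{\lambda-\rho}\chi(m(\cdot))\varphi(\cdot)$ is bi-$L$-invariant, the two $L$-integrations in (\ref{eq18}) each contribute the total mass $1$ of $L$, and
\begin{equation*}
\eta_{\mu^0}(\lambda)=C\int_{\exp\g{b}}\alpha(b)^{\lambda-\rho}\,\chi(m(b))\,\varphi(b)\,\delta(b)\,db,
\end{equation*}
with $C$ the ($\lambda$- and $\chi$-independent) constant of (\ref{eq18}) and $\delta=\prod_{\alpha\in\Sigma^+}|e^\alpha-e^{-\alpha}|^{m_\alpha}$.

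The second step is to insert the explicit data on $b=\exp Y(\mathbf t)$ coming from (\ref{eq:B}). Theorem \ref{thm2}, applied to the two points $b_0$ and $b\cdot b_0$ of $\cB$, gives $\alpha(b)^{\lambda}=|\mathrm{Cos}(b_0,b\cdot b_0)|^{\lambda}=\prod_{j=1}^p|\cos t_j|^{\lambda}$; the formula (\ref{eq38}) expresses $\chi(m(b))$ through the upper-left block $\mathrm{diag}(\cos t_1,\dots,\cos t_p)$ of $b$; and the value $\varphi(b)=\psi_{\mu^0}(b)$ is supplied by the identification $\psi_{\mu^0}=\varphi_{\mu^0+\rho_\g{k}}$ of Section \ref{grassmann} — for $\K=\R$ Lemma (\ref{eq23}) gives $\psi_{\mu^0}(b)=\prod_j\cos t_j$, and for $\K=\C$ Lemma (\ref{eq13}) gives $\psi_{\mu^0}(b)=\eta_l^+(b)$ (a product over $\mathcal{O}_s^+$ of $2|l|$-th powers of $\tfrac{1}{2}(e^{\eps_j}+e^{-\eps_j})$). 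Finally Lemma \ref{lem:Sigma} turns $\delta(b)$ into an explicit product of powers of $\sin t_j$, $\cos t_j$, $\sin(t_i+t_j)$ and $\sin(t_i-t_j)$. Restricting the $b$-integral to a fundamental domain for $L\ba K/L$ inside $\exp\g{b}$ — on which all these trigonometric quantities are nonnegative, so the phase in $\chi(m(b))$ is trivial — and substituting $u_j=\sin^2 t_j$, under which $\sin(t_i+t_j)\sin(t_i-t_j)=u_i-u_j$ and the $\cos$-powers become powers of $1-u_j$, converts the whole thing into a Selberg integral
\begin{equation*}
\int_{[0,1]^p}\prod_{j=1}^p u_j^{x-1}(1-u_j)^{y-1}\prod_{i<j}|u_i-u_j|^{d}\,du
\end{equation*}
whose exponents $x,y$ are explicit affine functions of $\lambda,n,k,p,d$. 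Selberg's evaluation makes this a product of $\Gamma$-quotients which reassembles into a ratio of Siegel $\Gamma_{p,d}$-functions; the normalizing constant $C$, equal to the reciprocal of $\int_{\exp\g{b}}\delta(b)\,db$, is computed the same way, and assembling the two and simplifying with $\rho\cong d(n+1)/2$, $\mu^0=2k\rho_s$ produces (\ref{eq19}).

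I expect the main obstacle to be this last step: keeping the bookkeeping of all the shifts straight — by $\rho$, by $\rho_\g{k}$ coming from $\psi_{\mu^0}=\varphi_{\mu^0+\rho_\g{k}}$, by the contribution $\mu^0$ of the spherical function itself, and by the exponents $d-1$ and $d(q-p)$ hidden in $\delta$ — so that Selberg's product collapses to precisely the claimed ratio with arguments $\tfrac{1}{2}(\lambda-\rho+k+dp)$ and $\tfrac{1}{2}(\lambda+\rho+k)$ and prefactor $\Gamma_{p,d}(d(n+1)/2)/\Gamma_{p,d}(dp/2)$, and doing this uniformly in the two cases (for $\K=\R$ one necessarily has $p=2$, $d=1$, $k=1$, so the Selberg integral degenerates to a low-dimensional Euler-type integral, whereas for $\K=\C$ one has $d=2$ and arbitrary $p$). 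A subsidiary point requiring care is the legitimacy of passing to the fundamental domain and the accompanying sign/branch bookkeeping for $\chi(m(b))$ and for $\psi_{\mu^0}$ on the compact torus $\exp\g{b}$; this is handled by observing that only the product $\chi(m(\cdot))\psi_{\mu^0}(\cdot)$, not the two factors separately, must be bi-$L$-invariant, which is ensured by (\ref{eq22}) together with the equivariance of $\varphi$.
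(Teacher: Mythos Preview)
Your proposal is correct and follows the paper's argument almost exactly through the reduction to the torus integral via (\ref{eq7}), (\ref{eq18}), (\ref{eq22}), and the explicit formulas (\ref{eq:B}), (\ref{eq38}), (\ref{eq23}), (\ref{eq13}).

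The one place where the paper proceeds differently is the final evaluation. Rather than feeding the resulting torus integral directly into Selberg's formula, the paper observes that the three factors combine to a pure power of $\alpha$: from $\alpha(b)^{\lambda-\rho}=\prod_j|\cos t_j|^{\lambda-\rho}$, $\chi(m(b))=\prod_j(\cos t_j/|\cos t_j|)^j$ with $j\in\{1,l\}$, and $\psi_{\mu^0}(b)=\prod_j(\cos t_j)^{k}$ one gets exactly $\alpha(b)^{\lambda-\rho+k}$. Hence $\eta_{\mu^0}(\lambda)=\int_K\alpha(h)^{\lambda-\rho+k}\,dh$, which is precisely the scalar ($\chi$ trivial) integral already computed in \cite{op} with $\lambda$ shifted by $k$; the constant is then fixed by the obvious normalization $\eta_{\mu^0}(\rho-k)=1$. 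This bypasses the Selberg bookkeeping entirely and explains structurally why the line-bundle answer is just a $k$-shift of the scalar answer. Your direct Selberg computation would of course reproduce the same $\Gamma_{p,d}$ ratio, but at the cost of the exponent-tracking you correctly identify as the main obstacle; the paper's reduction to \cite[(5.5)]{op} sidesteps that obstacle.
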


\begin{proof}
(1) In case $\K = \C$, $\chi = \chi_l$ for some $l \in \Z$ (cf. (\ref{eq:chi-c})) and $\Lambda_{\chi}^+ = \Lambda_l^+$
is given by (\ref{LamPlus}). From (\ref{eq7}) we have for $\mu \in \Lambda_l^+$,
\begin{equation}
\label{eq30}
\eta_\mu (\lambda) = J (\lambda) \psi_{\mu, l} (e) = \int_K \alpha (h)^{\lambda - \rho} \chi_l (m (h))
\psi_{\mu, l} (h)\, d h\, .
\end{equation}
We do not expect a direct evaluation of the integral when plugging $\mu^0$ into (\ref{eq30}) to compute $\eta_{\mu^0} (\lambda)$. 
Instead we reduce it to the case \cite{op}. 
In view of (\ref{eq22}), (\ref{eq18}), (\ref{eq12}), and (\ref{eq13}), there is a constant $C$ such that 
\begin{eqnarray*}
\eta_{\mu^0} (\lambda) & = & C \int_{\exp \g{b}} \alpha (b)^{\lambda -\rho} \chi_l (m (b)) 
\varphi_{\mu^0 + \rho_\mathfrak{k},\; l} (b) \delta (b)\, d b\\
& = & C \int_{\exp \g{b}} \alpha (b)^{\lambda -\rho} \chi_l (m (b)) \eta_l^+ (b) \delta (b)\, d b.
\end{eqnarray*}
{}From Lemma 5.8 in \cite{op} we have for $\lambda \in \C$,
\begin{equation}
\label{eq15}
\alpha (b)^\lambda = \alpha (\exp (Y (\mathbf{t})))^\lambda = \prod_{j = 1}^p |\cos (t_j)|^\lambda.
\end{equation}
Moreover, by (\ref{eq:B}) and (\ref{eq38}), 
\begin{equation}
\label{eq16}
\chi_l (m (b)) = \frac{\prod_{j=1}^p (\cos t_j)^l}{\prod_{j=1}^p |\cos t_j|^l}\, .
\end{equation}
Since $b^{\eps_j} = e^{\eps_j (Y (\mathbf{t}))} = e^{i t_j}$, we have 
\begin{equation}
\label{eq17}
\eta_l^+ (b) = \prod_{\alpha \in \mathcal{O}_s^+} \left(\frac{b^\alpha + b^{- \alpha}}{2}\right)^{|l|} = \prod_{j = 1}^p
(\cos t_j)^{|l|}.
\end{equation}
It follows from (\ref{eq15}), (\ref{eq16}), and (\ref{eq17}) that 
$$\alpha (b)^{\lambda -\rho} \chi_l (m (b)) \eta_l^+ (b) = \prod_{j = 1}^p |\cos t_j|^{\lambda - \rho + |l|}.$$
This gives 
\begin{equation}
\label{eq:eta0}
\eta_{\mu^0} (\lambda) = C \int_{\exp \g{b}} \alpha (b)^{\lambda -\rho + |l|} \delta (b) d b = \int_K
\alpha (h)^{\lambda - \rho + |l|}\, d h.
\end{equation}
(2) In case $\mathbb{K} = \mathbb{R}$, $\chi$ is given by (\ref{eq:chi-nonc}), and the sublattice $\Lambda_\chi^+$ is 
given by (\ref{LamPlus1}). Then
\[\eta_{\mu^0} (\lambda) = C \int_{\exp \mathfrak{b}} \alpha (b)^{\lambda - \rho} \chi (m (b)) \varphi_{2 \rho_s + \rho_\mathfrak{k}} (b)
 \delta (b)\, d b.\]
It follows from (\ref{eq:B}), (\ref{eq38}), (\ref{eq15}), and (\ref{eq23}) that
\begin{equation}
\label{eq:eta1}
\eta_{\mu^0} (\lambda) = C \int_{\exp \g{b}} \alpha (b)^{\lambda -\rho + 1} \delta (b)\, d b = \int_K
\alpha (h)^{\lambda - \rho + 1}\, d h.
\end{equation}
The formulas (\ref{eq:eta0}) and (\ref{eq:eta1}) can be written as 
\begin{equation}
\label{eq:eta}
\eta_{\mu^0} (\lambda) = \int_K \alpha (h)^{\lambda - \rho + k}\, d h
\end{equation}
where $k = 1$ for $\mathbb{K} = \mathbb{R}$, and $k = |l|$ for $\mathbb{K} = \mathbb{C}$.
Note that (\ref{eq:eta}) is simply the term \cite[(5.5)]{op} with a shifted factor. So 
$$\eta_{\mu^0} (\lambda) = C \prod_{j = 1}^p \frac{\Gamma (\frac{1}{2} (\lambda - \rho + k + d p - d (j -
1)))}{\Gamma (\frac{1}{2} (\lambda + \rho + k - d (j - 1)))}$$
with $C$ to be determined. 
We may compute $C$ by evaluating $\eta_{\mu^0} (\lambda)$ at $\lambda = \rho - k$ since 
$\eta_{\mu^0} (\rho - k) = 1$. Precisely,
\begin{eqnarray*}
1 & = & C \prod_{j = 1}^p \frac{\Gamma (\frac{1}{2} (\rho - k - \rho + k + d p - d (j - 1)))}{\Gamma
(\frac{1}{2} (\rho - k + \rho + k - d (j - 1)))}\\
& = & C \prod_{j=1}^p \frac{\Gamma (\frac{1}{2} (d p - d (j - 1)))}{\Gamma (\frac{1}{2} (2 \rho - d (j - 1)))}\\
& = & C \frac{\Gamma_{p, d} (d p / 2)}{\Gamma_{p, d} (\rho)}
\end{eqnarray*}
Using the fact that $\rho = d (n+1) / 2$, we have 
$$C = \frac{\Gamma_{p, d} (d (n+1) / 2)}{\Gamma_{p, d} (d p / 2)}.$$
The desired formula (\ref{eq19}) follows immediately.
\end{proof}

\begin{remark}
In the integral (\ref{eq:eta}), the factor $\alpha (b)^{\lambda - \rho + k}$ might produce singularities. 
So we need the condition $\re (\lambda - \rho + k) > - 1$ for (\ref{eq:eta}) converges. That is to say,
(\ref{eq19}) holds for $\lambda \in \g{a}_\C^\ast (c_P)$. However it has a meromorphic continuation to $\g{a}_\C^\ast$,
as rational functions of $\Gamma$-factors. 
\end{remark}

\begin{theorem}
Let $\Lambda_\chi^+$ be the sublattice in Theorem \ref{thm5} parametrizing the $\chi$-spherical representations of $K$.
Let $\mu = (\mu_1, \dotsc, \mu_p) \in \Lambda_\chi^+$ and $\lambda \in \C$. The $K$-spectrum of the $\ct$ transform
$\cl$ (\ref{eq:ct}) is given by 
\begin{equation}
\label{eq20}
\eta_\mu (\lambda) = c (\mu) \frac{\Gamma_{p, d} (\frac{d}{2} (n+1))}{\Gamma_{p, d} (\frac{1}{2} d p)}
\frac{\Gamma_{p, d} (\frac{1}{2} (\lambda - \rho + k + d p))}{\Gamma_{p, d} (\frac{1}{2} (- \lambda + \rho +
k))} \frac{\Gamma_{p, d} (\frac{1}{2} (- \lambda + \rho + \mu))}{\Gamma_{p, d} (\frac{1}{2} (\lambda + \rho +
\mu))},
\end{equation}
where $c (\mu) = (-1)^{\frac{|\mu| - p k}{2}}$, and $k = 1$ for $\mathbb{K} = \mathbb{R}$ and 
$k = |l|$ for $\mathbb{K} = \mathbb{C}$ (with $l \in \Z$).
\end{theorem}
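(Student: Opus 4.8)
The plan is to derive \eqref{eq20} by combining the two results already established and eliminating the single unknown factor between them. The preceding theorem provides one meromorphic function $G(\lambda)$, independent of $\mu$, such that for every $\mu=(\mu_1,\dots,\mu_p)\in\Lambda_\chi^+$
\[
\eta_\mu(\lambda)=(-1)^{|\mu|/2}\,G(\lambda)\,\frac{\Gamma_{p,d}\bigl(\tfrac12(-\lambda+\rho+\mu)\bigr)}{\Gamma_{p,d}\bigl(\tfrac12(\lambda+\rho+\mu)\bigr)},
\]
so the whole task reduces to pinning down $G(\lambda)$.

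First I would specialize to $\mu=\mu^0=k(\eps_1+\cdots+\eps_p)$, which under the identification $\sum_j\mu_j\eps_j\leftrightarrow(\mu_1,\dots,\mu_p)$ is $(k,\dots,k)$ and has $|\mu^0|=pk$. Substituting into the display above and equating with the closed form for $\eta_{\mu^0}(\lambda)$ from Theorem \ref{thm3}, the common factor $\Gamma_{p,d}\bigl(\tfrac12(\lambda+\rho+k)\bigr)$ cancels and one solves for
\[
G(\lambda)=(-1)^{-pk/2}\,\frac{\Gamma_{p,d}\bigl(\tfrac d2(n+1)\bigr)}{\Gamma_{p,d}\bigl(\tfrac12 dp\bigr)}\,\frac{\Gamma_{p,d}\bigl(\tfrac12(\lambda-\rho+k+dp)\bigr)}{\Gamma_{p,d}\bigl(\tfrac12(-\lambda+\rho+k)\bigr)}
\]
(using $\rho=d(n+1)/2$); this is the formula for $G(\lambda)$ referred to in the paragraph preceding the theorem.

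Finally I would put this back into the formula for $\eta_\mu(\lambda)$ and collect terms: the signs combine to $(-1)^{|\mu|/2}(-1)^{-pk/2}=(-1)^{(|\mu|-pk)/2}=c(\mu)$, which is a genuine sign because Theorem \ref{thm5} forces $\mu_j\equiv k\pmod 2$ for every $j$, hence $|\mu|\equiv pk\pmod 2$; the remaining $\Gamma_{p,d}$-quotients assemble into exactly the shape displayed in \eqref{eq20}. The case split $k=1$ for $\K=\R$ and $k=|l|$ for $\K=\C$ is inherited from \eqref{eq32} and Theorem \ref{thm3}, requiring no further argument.

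Since the substantive work is done by \eqref{recur} (the recursion feeding the preceding theorem) and \eqref{eq19} (the Siegel-integral evaluation at $\mu^0$), this theorem is essentially bookkeeping, and the one point deserving care is the sign. The factor $(-1)^{|\mu|/2}$ in the formula above is a priori ambiguous when $|\mu|$ is odd --- which can occur for $\K=\C$ when $p$ and $|l|$ are both odd --- but that ambiguity is cancelled precisely by the matching $(-1)^{-pk/2}$ coming out of $G(\lambda)$, so only the well-defined combination $c(\mu)=(-1)^{(|\mu|-pk)/2}$ survives. I would also double-check that the Siegel $\Gamma_{p,d}$ arguments match up componentwise --- that the $j$-th factor of $\Gamma_{p,d}\bigl(\tfrac12(\lambda-\rho+k+dp)\bigr)$ is $\Gamma\bigl(\tfrac12(\lambda-\rho+k+dp-d(j-1))\bigr)$, in line with the $d(j-1)$ shift in \eqref{recur} --- so that all the quotients telescope correctly; that compatibility check is the one place where a slip in the constants would be easy to make.
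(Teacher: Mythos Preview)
Your proposal is correct and follows essentially the same route as the paper: specialize \eqref{eq21} at $\mu=\mu^0=(k,\dots,k)$, equate with \eqref{eq19} to solve for $G(\lambda)$ (the paper's \eqref{eq29}), and substitute back to obtain \eqref{eq20}. Your extra remarks on why $c(\mu)$ is a genuine sign and on the componentwise compatibility of the $\Gamma_{p,d}$ factors are sound sanity checks that the paper leaves implicit.
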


\begin{proof}
Let $\mu^0 \in \Lambda_\chi^+$ be as in (\ref{eq32}). Then $|\mu^0| = p k$. We identify $\mu^0 = (k, \dotsc, k)$.
The equation (\ref{eq21}) suggests that 
$$\eta_{\mu^0} (\lambda) = (-1)^{p k / 2} G (\lambda)\; \frac{\Gamma_{p, d} (\frac{1}{2} (- \lambda + \rho +
k))}{\Gamma_{p, d} (\frac{1}{2} (\lambda + \rho + k))}. $$
It follows by using (\ref{eq19}) that 
\begin{align}
G (\lambda) & =  (-1)^{\frac{- p k}{2}} \eta_{\mu^0} (\lambda) \frac{\Gamma_{p, d} (\frac{1}{2} (\lambda +
\rho + k))}{\Gamma_{p, d} (\frac{1}{2} (- \lambda + \rho + k))} \nonumber\\
& =  (-1)^{\frac{- p k}{2}} \frac{\Gamma_{p, d} (d (n+1) / 2)}{\Gamma_{p, d} (d p / 2)} \frac{\Gamma_{p,
d} (\frac{1}{2} (\lambda - \rho + k + d p))}{\Gamma_{p, d} (\frac{1}{2} (- \lambda + \rho + k))}. \label{eq29}
\end{align}
Hence, the equation (\ref{eq21}) together with (\ref{eq29}) gives (\ref{eq20}).
\end{proof}

\end{document}